\documentclass[11pt]{amsart}
\usepackage{latexsym,amssymb,amsmath}
\usepackage{amssymb,amsthm,upref,amscd}
\usepackage{color}
\usepackage[T1]{fontenc}
\usepackage[titletoc]{appendix}
\usepackage{srcltx}
\usepackage[utf8]{inputenc}
\usepackage{graphicx,color,verbatim}
\usepackage{amscd,amsmath,amsfonts,amssymb}
\usepackage{mathrsfs}
\usepackage{hyperref}
\textwidth=6in \textheight=9.5in \topmargin=-0.5cm
\oddsidemargin=0.5cm \evensidemargin=0.5cm
\usepackage{color,enumitem,graphicx}
 \usepackage[]{xcolor}

  %for paper content
  %for comments

\newtheorem{thm}{Theorem}[section]
\newtheorem{lem}[thm]{Lemma}
\newtheorem{cor}[thm]{Corollary}
\newtheorem{Prop}[thm]{Proposition}

\newtheorem{Rem}[thm]{Remark}

\numberwithin{equation}{section}

\newcommand{\R}{\mathbb R}

\title[Schr\"odinger--Newton equations in dimension two]{Schr\"odinger--Newton equations in dimension two via a Pohozaev--Trudinger log-weighted inequality}

\author[D.~Cassani]{Daniele Cassani$^\text{1}$}

\author[C.~Tarsi]{Cristina Tarsi$^\text{2}$}
\address[D.~Cassani]{\newline\indent Dip. di Scienza e Alta Tecnologia
	\newline\indent
	Universit\`{a} degli Studi dell'Insubria
	\newline\indent and
	\newline\indent RISM--Riemann International School of Mathematics
	\newline\indent Villa Toeplitz, Via G.B. Vico, 46 -- 21100 Varese}
    \email{\href{mailto:daniele.cassani@uninsubria.it}{daniele.cassani@uninsubria.it}}

\address[C.~Tarsi]{\newline\indent Dipartimento di Matematica
	\newline\indent
	Universit\`a degli Studi di Milano
	\newline\indent Via C. Saldini, 50 -- 20133 Milano 
	}
\email{\href{mailto:cristina.tarsi@unimi.it}{cristina.tarsi@unimi.it}}

\thanks{(1) Corresponding author: daniele.cassani@uninsubria.it}

\subjclass[2010]{35A15; 35J60; 35B40}
\date{\today}
\keywords{Nonlocal nonlinear elliptic PDEs, Schr\"odinger-Poisson systems, Choquard equations, Weighted Sobolev spaces, Trudinger-Moser inequalities, Variational methods, Exponential growth.}

%%%%%%%%%%%%%%%%%%%%%%%%%%%%%%%%%
\begin{document}

\begin{abstract} We study the following Choquard type equation in the whole plane 
\begin{equation*}
(C)\quad -\Delta u+V(x)u=(I_2\ast F(x,u))f(x,u),\quad x\in\R^2
\end{equation*}
where $I_2$ is the Newton logarithmic kernel, $V$ is a bounded Schr\"odinger potential and the nonlinearity $f(x,u)$, whose primitive in $u$ vanishing at zero is $F(x,u)$, exhibits the highest possible growth which is of exponential type. The competition between the logarithmic kernel and the exponential nonlinearity demands for new tools. A proper function space setting is provided by a new weighted version of the Pohozaev--Trudinger inequality which enables us to prove the existence of variational, in particular finite energy solutions to $(C)$. 
\end{abstract}

\maketitle

%%%%%%%%%%%%%%%%%%%%%%%%%%%%%%%%%%%%%

\section{Introduction} 
\noindent Consider the following class of nonlocal equations 
\begin{equation}\label{qq1}
-\Delta u+V(x)u=(I_\alpha\ast F(u))f(u),\quad x\in\R^N
\end{equation}
where $V\geq 0$ is the external Schr\"odinger potential, $F$ is the primitive function of the nonlinearity $f$ vanishing at zero, the kernel $I_\alpha$ is defined for $x\in\R^N\setminus\{0\}$, $N\geq 2$ by 
\begin{equation*}
I_\alpha(x):=
\begin{cases}
\displaystyle\frac{\frac{\Gamma((N-\alpha)/2)}{\Gamma(\alpha/2)\pi^{N/2}2^\alpha}}{|x|^{N-\alpha}},\quad 0<\alpha<N, &\\
&\\
\displaystyle \frac{1}{2^{N-1}\pi^{\frac{N}{2}}\Gamma(\frac{N}{2})}\log\frac{1}{|x|},\quad \alpha=N,
\end{cases}
\end{equation*}
where $\Gamma(\cdot)$ denotes Euler's Gamma function. Notice that passing from $\alpha<N$ to the limiting case $\alpha=N$ the kernel is no longer of one sign and does not decay at infinity which sets the problem in a quite different framework. By introducing the function $\phi:=I_\alpha\ast F(u)$ one has that \eqref{qq1} is equivalent to the following system 
\begin{equation}\label{ss1}
\begin{cases}
-\Delta u+V(x)u=\phi f(u),\quad &\\
& x\in\R^N\\
-\Delta^{\frac{\alpha}{2}} \phi=F(u)\ ,
\end{cases}
\end{equation}
which in the case $\alpha=2$ it turns out to be the so-called Schr\"odinger--Poisson system which has an Hamiltonian structure and which turns out to be relevant in applications, see \cite{Benci} and references therein. An extensive literature has been devoted to the higher dimensional case $N\geq 3$ and we refer to \cite{MV,CVZ,CZ,Du,W} for an up to date, though non exhaustive bibliography. 
On the contrary, just a few results are available in the planar case. Existence results in the case $\alpha<N=2$ have been proved in \cite{BV} in the case of power-like nonlinearities and in \cite{ACTY} in the case of exponential growth.  However, in dimension two the equivalence between \eqref{qq1} and \eqref{ss1} in the Schr\"odinger--Poisson case $\alpha=2$, carries over as long as the logarithmic kernel is taken into account. Existence and qualitative properties of solutions in the case of logarithmic kernel have been obtained in \cite{BCV,CW} for power-like nonlinearities. On the other hand, the polynomial growth somehow downplays the main feature of dimension two which allows finite energy solutions to have arbitrary polynomial growth up to the exponential. 

\noindent The main purpose of this paper is to tackle the problem in which one has the logarithmic kernel and the exponential growth, namely the limiting case $\alpha=N=2$ which is in turn the Schr\"odinger-Poisson case. As we are going to see, the main difficulty arises in the competing presence of a too loose singular behavior of the logarithmic kernel compared with the exponential growth of the nonlinearity within the convolution, for which the problem demands for a proper function space setting. Here we develop a suitable framework in which we can prove the existence of mountain pass solutions. Let us finally mention that nonlinear terms with exponential growth outside the convolution, which cast the problem in a quite different context, have been recently considered in \cite{AF}.  

\medskip

\noindent We will focus on the following non-autonomous problem 
\begin{equation}\label{P}
\begin{cases}
\displaystyle -\Delta u +V(x) u  =\frac 1{2\pi}\Big(\log \frac 1{|x|}\ast F(x,u)\Big)f(x,u), \,\,\ \mbox{in} \,\,\, \mathbb{R}^{2} \\
u \in H^{1}(\mathbb{R}^{2}),\, u>0\ .
\end{cases}
\end{equation}
On the Schr\"odinger potential $V$ we make the following assumptions:
\begin{itemize}
	\item[$(V_1)$] $V(x)\geq V_0>0$ in $\R^2$ for some $V_0>0$;
	
	\item[$(V_2)$] $V(x)$ is a 1-periodic continuous function\ .
\end{itemize}

\noindent With a slight abuse of notation, we assume the nonlinearity $f(x,s)=c(x)f(s)$ where $c(x)$ is a strictly positive, $1$-periodic continuous function and $f(s)$ a differentiable function whose primitive vanishing at zero is $F(s)$ and such that:
\begin{itemize}
	\item[$(f_1)$]$\displaystyle f(s)\geq 0$ for all $s \geq 0$, $\displaystyle f(s) \leq Cs^{p} e^{4\pi s^2}$ as $s\to +\infty$   for some $p>0$ and $f(s)\asymp s^{q-1}$ for some $q\geq 2$ as $s\to 0$;  \\
	
	\item[$(f_2)$]  $\exists C>\delta>0$ such that $\delta \leq \frac{F(s) f'(s)}{f^2(s)}\leq C \quad  \forall \, 	s> 0$;\\ 
		
	\item[$(f_3)$]  $\displaystyle \lim_{s\to +\infty}\frac{F(s) f'(s)}{f^2(s)}= 1$, or equivalently 
	$\displaystyle \lim_{s\to +\infty}\frac{d}{d s}\frac{F(s)}{f(s)}=0$;\\
	
	\item[$(f_4)$] $\displaystyle
	\lim_{s\to +\infty} \frac{s^3f(s)F(s)}{e^{8\pi s^2}}\geq\beta> \mathcal V$, where $\mathcal V$ will be explicitly given in Section \ref{MP_geo}. 	
	
\end{itemize}
\noindent Since we look for positive solutions, we may also assume $f(s)=0$ for $s\leq 0$.

\noindent Let us make a few comments on our assumptions:
\begin{itemize}	
\item $(f_1)$ gives the following 
	\begin{equation}\label{estF}
	0\leq F(s)\leq C\cdot\left\{
	\begin{array}{ll}
	s^{q}, & s\leq s_0\\
	s^{p-1}e^{4\pi s^2}, & s>s_0
	\end{array}
	\right.  \quad \hbox{ for some } s_0>1
	\end{equation}
	observing that $\int_{s_0}^s t^pe^{4\pi t^2}dt\leq s^{p-1}\int_1^s te^{4\pi t^2}dt$ for any $s>1$;

\item $(f_2)$ implies $f(s)$ is monotone increasing in $s$, so that $F(s)=\int_0^sf(\tau)d\tau \leq sf(s)$. Hence, the quantity $\frac{F(x,s)}{f(x,s)}=\frac{c(x)F(s)}{c(x)f(s)}=\frac{F(s)}{f(s)}$ is well defined and vanishes only at $s=0$. Furthermore, 
	\begin{equation}\label{F/f}
\frac{\partial}{\partial s}\left(\frac{F(x,s)}{f(x,s)}\right)=\frac{d}{d s}\left(\frac{F(s)}{f(s)}\right)=\frac{f^2(s)-F(s)f'(,s)}{f^2(s)}\leq 1 -\delta
		\end{equation}
which implies $F(x,s)\leq (1-\delta) sf(x,s)$
and this  improves the previous Ambrosetti-Rabinowitz condition $F(x,s)\leq sf(x,s)$;

\item $(f_3)$ yields a fine estimate from below on the quotient  $\displaystyle\frac{Ff'}{f^2}$, as  $s\to +\infty$. 

\noindent Indeed, for any $\varepsilon>0$ there exists $s_\varepsilon>0$ such that:
\begin{equation}\label{estF/f}
\frac{Ff'}{f^2}(s)\geq \left\{
\begin{array}{ll}
 \delta s, & s\leq s_\varepsilon\\
(1-\varepsilon)s, & s>s_\varepsilon \ ;
\end{array}
\right. 
\end{equation}

\item $(f_4)$ is in the spirit of the de Figueiredo--Miyagaki--Ruf condition \cite{dFMR} which in dimension two turns out to be a key ingredient in order to prove compactness. Loosely speaking, it plays the role of the upper bound of the energy in terms of the Sobolev constant $(1/N)S^{N/2}$ in higher dimensions. The role of condition $(f_4)$ will be detailed in Section \ref{MP_geo};

\item Functions $F_i(s)$ satisfying our set of assumptions are for instance given by:
\begin{equation*}
F_1(s)= e^{4\pi s^2}-1; \ \ F_2(s)=s^pe^{4\pi s^2},\ \forall \, p \geq 2;\ \ F_3(s)= \left\{
\begin{array}{ll}
s^q, & s\leq s_0\\
cs^{p}e^{4\pi s^2}, & s>s_0
\end{array}, \ \forall\, q\geq 2,\,  p> 1\ .
\right.
\end{equation*}
\end{itemize}

	\noindent Accordingly to our assumptions on the nonlinearity, we will distinguish two cases, namely when $q=2$ and $q>2$. For the sake of clearness, we will state our main results in the case $q=2$, whence the general case when $q>2$ will be covered since Section \ref{log_p_t} and thereafter. Consider the following weighted Sobolev space $H_{w_0}^1(\R^2)$ which is the completion of smooth compactly supported functions with respect to the norm
\[
\|u\|_{w_0}^2=\|\nabla u\|_2^2+\|u\|_{L^2(w_0dx)}^2=\int_{\R^2}|\nabla u|^2dx+\int_{\R^2}u^2\log(e+|x|)dx\ .
\]

\begin{thm}\label{thm_A1}
The weighted Sobolev space $H_{w_0}^1(\R^2)$ embeds into the weighted Orlicz space $L_{\phi}(\R^2, \log(e+|x|)dx)$ where $\phi$ is the $n$-function $\phi(t)=e^{t^2}-1$. More precisely, we have 
\begin{equation}\label{wT}
\int_{\R^2}\left(e^{\alpha u^2}-1\right)\log(e+|x|)dx<\infty
\end{equation}
for any $u\in H^1_{w_0}(\R^2)$ and any $\alpha >0$.
Moreover, the following uniform bound holds 
\begin{equation}\label{wM}
\sup_{\|u\|_{w_0}^2\leq 1} \int_{\R^2}\left(e^{2\pi u^2}-1\right)\log(e+|x|)dx <+\infty\ .
\end{equation}
\end{thm}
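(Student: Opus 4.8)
The plan is to prove the uniform bound \eqref{wM} first and then deduce the qualitative statement \eqref{wT} from it. For the reduction: since $C_c^\infty(\R^2)$ is dense in $H^1_{w_0}(\R^2)$ by construction, given $u\in H^1_{w_0}(\R^2)$ and $\alpha>0$ one writes $u=\varphi+\psi$ with $\varphi\in C_c^\infty(\R^2)$ and $\|\psi\|_{w_0}^2\le 2\pi/(\alpha(1+\varepsilon))$; using $(\varphi+\psi)^2\le(1+\varepsilon)\psi^2+C_\varepsilon\varphi^2$ and that $\varphi$ is bounded with compact support, $e^{\alpha u^2}-1$ is dominated by a constant multiple of $e^{\alpha(1+\varepsilon)\psi^2}-1$ plus a bounded compactly supported term, and \eqref{wM} applied to $\psi/\|\psi\|_{w_0}$ makes the left hand side of \eqref{wT} finite. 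So everything reduces to \eqref{wM}.

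To prove \eqref{wM} I would decompose $\R^2=B_{R_0}\cup\bigcup_{k\ge k_0}A_k$ with $A_k=\{2^k\le|x|<2^{k+1}\}$, $R_0=2^{k_0}$, where $k_0\in\N$ is a universal index fixed at the end. On the fixed ball $B_{R_0}$, since $\log(e+|x|)\ge1$ one has $\|u\|_{H^1(B_{R_0})}^2\le\|\nabla u\|_2^2+\int_{\R^2}u^2\log(e+|x|)\,dx=\|u\|_{w_0}^2\le1$, so by the (sub-critical, as $2\pi<4\pi$) Trudinger--Moser inequality on a bounded domain for $H^1$-functions, $\int_{B_{R_0}}(e^{2\pi u^2}-1)\,dx\le C$, whence $\int_{B_{R_0}}(e^{2\pi u^2}-1)\log(e+|x|)\,dx\le C\log(e+R_0)$, a universal constant. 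On each annulus I would rescale: set $u_k(y):=u(2^ky)$, $A_0':=\{\tfrac12<|y|<3\}\supset A_0:=\{1\le|y|<2\}$ and $A_k':=2^kA_0'$. By scale invariance of the Dirichlet integral in dimension two, $\int_{A_0'}|\nabla u_k|^2\,dy=\int_{A_k'}|\nabla u|^2\,dx\le1$, while, since $\log(e+|x|)\ge c\,k$ on $A_k'$ for $k\ge k_0\ge2$,
\begin{equation}\label{L2small}
\|u_k\|_{L^2(A_0')}^2=2^{-2k}\int_{A_k'}u^2\,dx\le\frac{2^{-2k}}{c\,k}\int_{A_k'}u^2\log(e+|x|)\,dx=:\frac{b_k}{c\,k\,2^{2k}},\qquad \sum_k b_k\le 3\,\|u\|_{w_0}^2,
\end{equation}
the last bound because the $A_k'$ have overlap multiplicity at most $3$. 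In particular $\|u_k\|_{L^2(A_0')}^2\le\sigma_*$ for all $k\ge k_0$ once $k_0$ is large.

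The heart of the matter is the following local estimate, which I expect to be the main obstacle (it is exactly where extra input is needed, since symmetrization cannot be used globally against the increasing weight $\log(e+|x|)$): \emph{there are universal constants $\sigma_*,C_*>0$ such that every $v\in H^1(A_0')$ with $\|\nabla v\|_{L^2(A_0')}^2\le1$ and $\|v\|_{L^2(A_0')}^2\le\sigma_*$ satisfies}
\begin{equation}\label{local}
\int_{A_0}(e^{2\pi v^2}-1)\,dy\le C_*\,\|v\|_{L^2(A_0')}^2 .
\end{equation}
To prove \eqref{local} I would fix $\chi\in C_c^\infty(A_0')$ with $\chi\equiv1$ on $A_0$, set $w:=\chi v\in H^1_0(A_0')$, so that $\int_{A_0}(e^{2\pi v^2}-1)\le\int_{A_0'}(e^{2\pi w^2}-1)$, $\|w\|_{L^2}^2\le\|v\|_{L^2(A_0')}^2$ and, by Young's inequality, $\|\nabla w\|_{L^2}^2\le\tfrac32$ provided $\sigma_*$ is small. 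Then I split $A_0'$ along the level $1$: on $\{|w|\le1\}$ convexity gives $e^{2\pi w^2}-1\le(e^{2\pi}-1)w^2$, so that part is $\le(e^{2\pi}-1)\|v\|_{L^2(A_0')}^2$; on $E:=\{|w|>1\}$, which by Chebyshev has $|E|\le\|w\|_{L^2}^2\le\|v\|_{L^2(A_0')}^2$, I introduce $\widetilde w_T:=(\min(|w|,T)-1)_+\in H^1_0(A_0')$, note $\|\nabla\widetilde w_T\|_{L^2}^2\le\tfrac32<2$ and $\{\widetilde w_T\neq0\}\subseteq E$, and invoke the scale-invariant Moser inequality in the refined form $\int(e^{\gamma g^2}-1)\le C_M\,|\{g\neq0\}|$ valid for $g\in H^1_0$ with $\gamma\|\nabla g\|_2^2\le4\pi$ --- which follows from the classical Moser inequality on a ball after replacing $g$ by $|g|^\ast$ --- with some admissible $\gamma>2\pi$. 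This gives $\int_{A_0'}(e^{\gamma\widetilde w_T^2}-1)\,dy\le C_M|E|$. Since on $E\cap\{|w|\le T\}$ one has $|w|=\widetilde w_T+1$, hence $2\pi w^2\le\gamma\widetilde w_T^2+C_\gamma$ with $C_\gamma$ absolute, letting $T\to\infty$ by monotone convergence yields $\int_E(e^{2\pi w^2}-1)\,dy\le e^{C_\gamma}(C_M+1)\|v\|_{L^2(A_0')}^2$, and \eqref{local} follows.

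Granting \eqref{local}, applying it to $v=u_k$ and combining \eqref{L2small} with $\log(e+2^{k+1})\le C k$ and $\int_{A_k}(e^{2\pi u^2}-1)\,dx=2^{2k}\int_{A_0}(e^{2\pi u_k^2}-1)\,dy$ gives
\[
\int_{A_k}(e^{2\pi u^2}-1)\log(e+|x|)\,dx\le C\,k\,2^{2k}\cdot C_*\,\frac{b_k}{c\,k\,2^{2k}}=\frac{CC_*}{c}\,b_k ,
\]
so summing over $k\ge k_0$ and adding the $B_{R_0}$ contribution bounds the left hand side of \eqref{wM} by $C\log(e+R_0)+\tfrac{3CC_*}{c}\|u\|_{w_0}^2$, the required uniform estimate. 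Here the logarithmic weight has produced only the factors $\log(e+2^k)\asymp k$, which are defeated by the gain $b_k/(k\,2^{2k})$ coming from the weighted $L^2$-part of the norm, and the strict inequality $2\pi<4\pi$ supplied, in \eqref{local}, the room to absorb the lower-order exponent after truncation.
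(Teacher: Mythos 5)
Your argument is correct, but it takes a genuinely different route from the paper. The paper proves \eqref{wT}--\eqref{wM} by a single global change of variables, the radial map $|y|=|x|\sqrt{\log(e+|x|)}$, whose Jacobian turns the weighted measure $\log(e+|x|)\,dx$ into Lebesgue measure and which sends $H^1_{w_0}(\R^2)$ onto $H^1(\R^2)$ with the two-sided norm comparison $\tfrac13\|u\|_{w_0}^2<\|\nabla v\|_2^2+\|v\|_2^2<2\|u\|_{w_0}^2$; Ruf's inequality \eqref{Ri} then gives everything at once, the constant $2\pi=4\pi/2$ being exactly the price of the factor $2$ in that comparison. You instead decompose $\R^2$ into dyadic annuli, use the scale invariance of the Dirichlet integral to bring each annulus back to a fixed reference annulus, observe that the weighted mass term forces $\|u\|^2_{L^2(A_k')}\lesssim b_k/(k\,4^k)$ with $\sum_k b_k\lesssim\|u\|_{w_0}^2$, and close with a local Moser estimate linear in the $L^2$ norm, obtained by cutoff, truncation at height one, and rearrangement performed only on the unweighted reference annulus --- so there is no conflict with the increasing weight. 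The details check out: the overlap multiplicity of the $A_k'$, the absorption $4\pi\widetilde w_T\le(\gamma-2\pi)\widetilde w_T^2+C_\gamma$ with $\gamma=8\pi/3>2\pi$, and the monotone passage $T\to\infty$ all work, and the Jacobian factor $4^k$ cancels exactly while the weight $\asymp k$ is beaten by the $1/k$ gain from the weighted $L^2$ norm. Two small remarks. First, on the core ball $B_{R_0}$ you need not invoke a Trudinger--Moser inequality for $H^1$ of a bounded domain (where $2\pi$ would in fact be the critical constant, not a subcritical one): since $\log(e+|x|)\ge1$, the constraint $\|u\|_{w_0}\le1$ already controls the full $H^1(\R^2)$ norm, so Ruf's inequality \eqref{Ri} applies directly. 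Second, your method proves more than stated: refining the cutoff so that $\|\nabla(\chi v)\|_2^2\le1+\delta$ with $\delta$ arbitrarily small (at the cost of shrinking $\sigma_*$ and enlarging $k_0$) lets the truncation argument run with any exponent $\alpha<4\pi/(1+\delta)$, so the supremum in \eqref{wM} is finite for every $\alpha<4\pi$; this is stronger than the paper's $2\pi$ and is directly relevant to Remark \ref{remA3} on non-sharpness.
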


\begin{thm}\label{thm1}
Suppose the nonlinearity $f$ satisfies $(f_1)$--$(f_4)$ and that the potential $V$ enjoys $(V_1)$--$(V_2)$. Then, problem \eqref{P} possesses a nontrivial finite energy solution (which in the case $q=2$ belongs to $H^1_{w_0}(\R^2)$).
\end{thm}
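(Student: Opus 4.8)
The plan is to set up a variational framework for \eqref{P} on a suitable function space and obtain the solution via a mountain pass argument, crucially exploiting Theorem \ref{thm_A1} both to make the energy functional well-defined and to recover enough compactness. First I would introduce the Poisson functional associated to the logarithmic kernel: for $u\in H^1_{w_0}(\R^2)$ set
\[
\mathcal{A}(u)=\frac{1}{2\pi}\int_{\R^2}\int_{\R^2}\log\frac{1}{|x-y|}F(x,u(x))F(y,u(y))\,dx\,dy,
\]
and split $\log\frac{1}{|x-y|}$ (following \cite{CW,BCV}) into the negative far-field part controlled by $\log(e+|x-y|)\leq \log(e+|x|)+\log(e+|y|)$ and the singular near-field part $\log\frac{1}{|x-y|}\mathbf{1}_{\{|x-y|\le 1\}}$, which is handled by a Hardy--Littlewood--Sobolev type bound. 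The key point is that $(f_1)$ via \eqref{estF} forces $F(x,u)$ to be controlled by $u^q$ near $u=0$ and by $u^{p-1}e^{4\pi u^2}$ for large $u$; Theorem \ref{thm_A1} then guarantees $F(x,u)\log(e+|x|)\in L^1$ and in fact $F(x,u)\in L^s(\log(e+|x|)dx)$ for every $s\ge 1$ once $\|u\|_{w_0}$ is small enough, which makes $\mathcal{A}$ finite and $C^1$ on $H^1_{w_0}(\R^2)$. The energy functional is then
\[
J(u)=\frac{1}{2}\|u\|_{w_0}^2 - \mathcal{A}(u),
\]
where one must be slightly careful: the natural quadratic form is $\|\nabla u\|_2^2+\int V u^2$, and the logarithmic weight $\log(e+|x|)$ in $\|\cdot\|_{w_0}$ enters only because the far-field splitting of $\mathcal A$ produces exactly $\int F(x,u)\log(e+|x|)$ terms; since $V$ is merely bounded below and periodic, $\|u\|^2_V=\|\nabla u\|_2^2+\int V u^2$ is equivalent to the $H^1$ norm, so one works on $X:=H^1(\R^2)\cap L^2(\log(e+|x|)dx)$ with the norm $\|\nabla u\|_2^2+\int V u^2 + \int u^2\log(e+|x|)$, and this is the space where both $\mathcal A$ and $J$ live.

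Next I would verify the mountain pass geometry. Positivity of $\|u\|_{w_0}$-small sphere values of $J$ follows because, by \eqref{estF} and Theorem \ref{thm_A1}, $\mathcal{A}(u)\le C\|u\|_{w_0}^{2q}+(\text{higher order})$ with $q\ge 2$, hence $J(u)\ge \frac14\|u\|^2>0$ on a small sphere; here the case $q=2$ versus $q>2$ changes only the exponents. For a point with $J<0$ one uses $(f_2)$, namely $F(x,s)\asymp f(x,s)^2/f'(x,s)$ growing like $e^{8\pi s^2}$, so $\mathcal A(tu)\to+\infty$ faster than $t^2$ along a fixed $u\ge 0$. This yields a Palais--Smale sequence $(u_n)$ at the mountain pass level $c>0$; the quantitative upper bound on $c$ is where $(f_4)$ enters, exactly as in de Figueiredo--Miyagaki--Ruf \cite{dFMR}: testing $J$ on a Moser-type concentrating family and using the lower bound $s^3 f(s)F(s)\gtrsim \beta e^{8\pi s^2}$ with $\beta>\mathcal V$, one shows $c<\mathcal V/2$ (the explicit threshold from Section \ref{MP_geo}), which is the budget that keeps the exponential integrals subcritical along Palais--Smale sequences.

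The heart of the proof — and the main obstacle — is the compactness/convergence step. I would first show the Palais--Smale sequence is bounded: the improved Ambrosetti--Rabinowitz inequality $F(x,s)\le(1-\delta)sf(x,s)$ from \eqref{F/f} gives $J(u_n)-\frac12\langle J'(u_n),u_n\rangle \ge \delta\,\mathcal{A}'(u_n)[u_n]\cdot(\text{coeff})$, controlling $\int (\log\tfrac1{|x-y|}\ast F(u_n))f(u_n)u_n$ and hence $\|u_n\|$. Because the problem is $1$-periodic (via $(V_2)$ and the periodicity of $c(x)$), a nontrivial weak limit need not exist outright: one runs a concentration–compactness / integer-translation argument (à la Lions), translating $u_n$ by lattice vectors $z_n\in\Z^2$ so that the translated sequence has a nonzero weak limit $u$; here care is needed because the weight $\log(e+|x|)$ is \emph{not} translation invariant, so the cleanest route is to prove non-vanishing directly in $H^1$ and then re-derive the weight-integrability of $F(x,u)$ from Theorem \ref{thm_A1} applied to the limit (after verifying $u\in H^1_{w_0}$, which in the case $q=2$ follows since $F(x,u)\in L^1$ with the log weight forces, through the equation $-\Delta u + Vu = \phi f(x,u)$ and $\phi=I_2\ast F$ of logarithmic type, the decay making $\int u^2\log(e+|x|)<\infty$). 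The delicate analytic estimate is passing to the limit in the nonlocal exponential term $\int(\log\tfrac1{|x-\cdot|}\ast F(u_n))f(u_n)\varphi$: one uses the level bound $c<\mathcal V/2$ to get, via the Pohozaev--Trudinger inequality \eqref{wM} and a Lions-type concentration lemma for the borderline exponential class, that $e^{4\pi u_n^2}$ (equivalently $f(u_n)$ and $F(u_n)$) is bounded in $L^r(\log(e+|x|)dx)$ for some $r>1$ uniformly, hence equi-integrable, so $F(u_n)\to F(u)$ and $f(u_n)\to f(u)$ strongly in the relevant weighted Lebesgue spaces and the convolution passes to the limit. This shows $u$ solves \eqref{P}; positivity follows from $f(s)=0$ for $s\le0$ and the strong maximum principle, and finite energy ($u\in H^1_{w_0}$ when $q=2$) is read off from the structure of $\phi$ as argued above. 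I expect the concentration step combined with the non-translation-invariance of the logarithmic weight to be the technically heaviest part, and it is where the new inequality of Theorem \ref{thm_A1} is indispensable.
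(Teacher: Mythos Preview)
Your overall architecture (weighted space via Theorem \ref{thm_A1}, mountain pass, level estimate from $(f_4)$, translation by $\Z^2$, passage to the limit) matches the paper's, but three concrete points in your sketch would not go through as written and are exactly where the paper introduces non-standard tools.

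\textbf{The threshold.} The mountain pass level estimate is $m_V<\tfrac12$, not $c<\mathcal V/2$. The constant $\mathcal V$ enters only as the lower bound on $\beta$ in $(f_4)$ needed to force $m_V<\tfrac12$; the number $\tfrac12$ is what later makes the exponent $4\pi\alpha$ with $\alpha<\tfrac{1}{2m_V}$ land strictly below the Ruf threshold $4\pi$ in the compactness step.

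\textbf{Boundedness of PS sequences.} Your proposed estimate $J(u_n)-\tfrac12\langle J'(u_n),u_n\rangle\ge \delta\cdot(\dots)$ does not work here: the kernel $\log\tfrac1{|x|}$ changes sign, so the difference $\int\bigl[\log\tfrac1{|x|}\ast F(u_n)\bigr]\bigl(u_nf(u_n)-F(u_n)\bigr)$ has no sign even though $u_nf-F\ge\delta u_nf$. The paper instead tests $I_V'(u_n)$ against the (non-obvious) function $v_n=F(u_n)/f(u_n)$, which is admissible by $(f_2)$ and produces $\tfrac{1}{2\pi}\int[\log\tfrac1{|x|}\ast F(u_n)]F(u_n)$ with a matching sign on both sides, yielding $\|u_n\|_V\le C$ directly (Lemma \ref{lem-PSbdd}).

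\textbf{Passing to the limit in the nonlocal exponential term.} Your appeal to a ``Lions-type concentration lemma for the borderline exponential class'' is too vague and, as stated, would need $\|\nabla u_n\|_2^2<1$, which you do not have. The paper's mechanism is a second clever test function: set $G(t)=\int_0^t\sqrt{Ff'}/f\,ds$ and $v_n=G(u_n)$; then $(f_3)$ gives $u_n\le t_\varepsilon+v_n/(1-\varepsilon)$, while the PS relations force $\|v_n\|_V^2\le 2c+o(1)<1$. This is what yields the uniform bound $\sup_n\int F(x,u_n)^\alpha<\infty$ for every $\alpha<\tfrac1{2c}$ (Lemma \ref{lem-PSimproveTM}), and only then can one combine HLS with the Cingolani--Weth lemma to upgrade $H^1$-boundedness to $H^1L^q_w$-boundedness and pass to the limit (Proposition \ref{prop-PS}). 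Without these two test-function tricks the compactness step, which you correctly flag as the heart of the matter, does not close.
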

\begin{Rem} In the more general case $q>2$, the solution of Theorem \ref{thm1} has finte energy in the space $H^1L^q_{w}(\R^2)$ introduced in Section \ref{log_p_t}.
\end{Rem}

\medskip

\subsection*{Overview} Equation \eqref{qq1} has a long history, heritage of the early studies on Polarons iniziated by Fr\"ohlich \cite{fro} and then has been revealed a good model also in completely different contexts such as plasma physics \cite{lieb} and quantum gravity \cite{penrose}. We refer the interested reader to the survey \cite{MV} and references therein for more on Physical aspects of the problem. 

\noindent Formally, the energy associated to problem \eqref{P} is given by 
$$I_V(u)=\frac12\int_{\R^2}|\nabla u|^2+ V(x) u^2dx- \frac1{4\pi}\int_{\R^2}\Big[\log \frac{1}{|x|}\ast F(x,u)\Big]F(x,u)dx \ . $$
\noindent In order to have the energy well defined in presence of a logarithmic kernel, the authors in \cite{CW, Stubbe}, restrict the space $H^1$ introducing a further constraint, eventually setting the problem in an intersection space in which the energy turns out to be well defined by the Hardy--Littlewood--Sobolev inequality. Our approach here is different, from one side we look for a proper function space setting in which such a natural constraint turns out to be automatically satisfied and on the other side, we wonder if this can be done by allowing the nonlinearity to exhibit exponential growth which is what we expect in dimension two, since the seminal work of Pohozaev \cite{P} and Trudinger \cite{NT}. Indeed, we prove that the Sobolev space $H^1$ with a logarithmic weight on the $L^2$ mass term of the norm gives the proper function space setting in which the energy in well defined up to the natural exponential growth in the nonlinearity. Our argument throws light on the fact that, roughly speaking, as concentration phenomena in the Moser functional are controlled by the $L^2$ norm of the gradient whereas vanishing phenomena are controlled by the $L^2$ norm, here we prove that a suitable logarithmic weight in the $L^2$ component of the $H^1$ norm is enough to obtain a functional inequality which at the end yields a natural function space framework where to set up the problem. We think this result is of independent interest and that could be useful elsewhere.  As pointed out also in \cite{AF,CW} an extra difficulty is given here by the lack of invariance by translations of the energy which forces to prove a priori bounds of eventually vanishing Palais--Smale sequences. Our method seems to be more natural also in this respect, as starting from any PS sequence we can prove the existence of a weak $H^1$-limit with no need to establish a priori bounds. 

\medskip

\noindent For convenience of the reader, some preliminary material is recalled in Section \ref{prelim}. In Section \ref{log_p_t} we establish the fundamental embedding inequality which will provide the function space framework of Section \ref{functional_framework} and that will be used throughout the paper. Section \ref{MP_geo} is devoted to show the underlying mountain pass geometry for the energy functional and to prove mountain pass level estimates which in this case is a delicate matter. In Section \ref{PS_sec} we prove compactness results by carefully analyzing the behavior of PS sequences and finally, we conclude in Section \ref{final_sec} the proof of Theorem \ref{thm1}.

\section{Preliminaries}\label{prelim}
\noindent In this section we recall some well known results which will be used in the sequel.

\noindent Let  $H^1_0(\Omega)$ be the classical Sobolev space, completion of smooth compactly supported functions with respect to the Dirichlet norm $\|\nabla \cdot\|_2$, when $\Omega$ is a bounded subset of $\R^N$, and with respect to the complete Sobolev  norm $(\|\nabla \cdot\|^2_2+\|\cdot \|_2^2)^{1/2}$, when the domain is unbounded and in particular for $\Omega =\R^N$. 

\noindent If $N\geq 3$,  the  classical Sobolev embedding theorem reads as follows 
\begin{equation}\label{Si}
H^1_0(\Omega)\hookrightarrow L^{2^*}(\Omega) \ , \ \text{ namely }\ \|u\|_{2^*}\leq \frac{1}{S}\, \|\nabla u\|_2\ ,
\end{equation}
where $2^*:=\frac{2N}{N-2}$ is the critical Sobolev exponent and the constant $S$ in \eqref{Si} is the best possible \cite{T}. 

\noindent When $N=2$ is the so-called Sobolev limiting case. One has the embedding $H^1_0(\Omega)\hookrightarrow L^p(\Omega)$ for all $1\leq p < \infty$ (see also \cite{CTZ} for related best constants estimates), though $H^1_0(\Omega) \not \subset L^{\infty}(\Omega)$.
The maximal degree of summability for functions in $H^1_0(\Omega)$ was established independently by Poho\v{z}aev \cite{P} and Trudinger \cite{NT} (see also \cite{Yudovich}) and is of exponential type, in a suitable Orlic\v{z} class of functions, namely  
\begin{equation}\label{orlicz}
 u \in H^1_0(\Omega) \ \Longrightarrow \ \displaystyle\int_\Omega (e^{\alpha|u|^2}-1)\,dx<\infty
, \quad \forall \alpha>0\ .
\end{equation}
Starting from the seminal work of J.~Moser \cite{M} in which a sharp version of \eqref{orlicz} is established, the Pohozaev--Trudinger embedding has been further developed during the last fifty years, in particular the first extension of \eqref{orlicz} to unbounded domains appears in \cite{Cao} for functions with bounded Sobolev's norm in the following form 
\begin{equation}\label{caoineq}
\sup_{\|\nabla u\|_2\leq 1,\, \|u\|_2\leq M}\int_{\R^2}\left(e^{\alpha u^2}-1\right)dx\leq C(\alpha)\|u\|_2<\infty\,\text{ if } \,\alpha<4\pi
\end{equation}
Thereafter, several sharp versions have been proved and extensions in many directions for which we refer to \cite{R,CST}. In particular, the borderline case in which $\alpha=4\pi$ remained uncovered until Ruf in \cite{R} established the following inequality which is sharp in the sense of Moser \cite{M} (so-called Trudinger-Moser type inqualities):
\begin{equation}\label{Ri}
\sup_{\|\nabla u\|_2^2+\:\|u\|_2^2\leq 1}\int_{\mathbb R^2}\left(e^{\alpha
u^2}-1\right)~dx\leq \widetilde{C}(\alpha)<\infty  \iff  \alpha\leq 4\pi\ .
\end{equation}
\begin{Rem}\label{Ruf_rem}
Note that in Ruf's inequality \eqref{Ri} the constraint is defined through the complete Sobolev norm  $\|\nabla \cdot\|_2^2+\|\cdot\|_2^2$. As one may realize by Cao's result, a closer inspection of the proof reveals that Ruf's inequality still holds, at least in the subcritical case $\alpha <4\pi$, replacing the $L^2$ norm with any weighted $L^2$ norm, provided the weight is bounded and also bounded away from the origin. 
\end{Rem}
\begin{Rem}
As an application of  \eqref{orlicz}, consider the following functional
\begin{equation*}
H^1(\R^2)\ni u\longmapsto  \int_{\R^2}F(x,u)\, dx
\end{equation*}
which is continuous on $H^1(\R^2)$, a consequence of $(f_1)$, \eqref{estF} and Holder's inequality. Indeed, note first that for any $t, s >0$
$$
|F(t)-F(s)|=\left|\int_s^tf(\tau) d\tau\right|\leq C \left|\int_s^t \left(\tau^{q-1}+\tau^{p-1}e^{4\pi \tau^2}\right)d\tau \right|
$$
so that, if $u_n\to u$ in $H^1(\R^2)$, as $n\to \infty$, then
\begin{multline*}
\int_{\R^2}|F(x,u_n)-F(x,u)|dx\leq  C\|u_n-u\|_2^2+ C
\int_{\R^2}|e^{5\pi u_n^2}-e^{5\pi u^2}|dx\\ 
\leq {\text{o}}(1)+ C\int_{\R^2}\left(e^{5\pi u^2}-1\right)|e^{5\pi (u_n^2-u^2)}-1|dx +C\int_{\R^2}\left|e^{5\pi (u_n^2-u^2)}-1\right|dx\longrightarrow 0
\end{multline*}
and the same holds  for the functional $u\longmapsto  \int_{\R^2}uf(x,u)\, dx$.

\end{Rem}

\noindent The main feature of the equation \eqref{qq1} is the nonlocal term defined through a convolution product. This turns out to be well defined in view of the following Hardy--Littlewood--Sobolev inequalities, which we state in  $\R^N$ for any $N\geq 1$, see \cite{LL} and also \cite{BS} for the interpolation spaces approach.

\begin{Prop}[HLS inequality]\label{HLS}
	Let $s, r>1$ and $0<\mu<N$ with $1/s+\mu/N+1/r=2$, $f\in
	L^s(\R^N)$ and $g\in L^r(\R^N)$. There exists a constant
	$C(s,N,\mu,r)$, independent of $f,h$, such that
	$$
	\int_{\R^N}[\frac{1}{|x|^{\mu}}\ast f(x)]g(x)\leq
	C(s,N,\mu,r) \|f\|_s\|g\|_r.
	$$
	\end{Prop}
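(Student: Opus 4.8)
The plan is to reduce the bilinear estimate to a mapping property of the Riesz potential and then prove that mapping property by real interpolation. By Hölder's inequality with the conjugate exponents $r'$ and $r$,
\[
\left|\int_{\R^N}\Big[\tfrac{1}{|x|^{\mu}}\ast f\Big]\,g\,dx\right|\le\big\|\,|x|^{-\mu}\ast|f|\,\big\|_{r'}\,\|g\|_r ,
\]
so it suffices to show that the convolution operator $T_\mu f:=|x|^{-\mu}\ast f$ is bounded from $L^s(\R^N)$ into $L^{r'}(\R^N)$, the constraint $1/s+\mu/N+1/r=2$ rewriting precisely as $1/r'=1/s+\mu/N-1$. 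Setting $q:=N/\mu\in(1,\infty)$, the starting observation is that the kernel is \emph{not} in $L^q$ but only in the weak-$L^q$ (Lorentz) space: since $|\{x:|x|^{-\mu}>\lambda\}|=\omega_N\lambda^{-N/\mu}$, one has $\||x|^{-\mu}\|_{L^{q,\infty}}=\omega_N^{1/q}<\infty$, and the exponent relation $1/r'=1/s+1/q-1$ is exactly the balance of the generalized (weak) Young inequality.

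First I would establish the weak-type bound $T_\mu:L^s\to L^{r',\infty}$ by splitting the kernel at a height $a=a(t)$ to be optimized. Writing $|x|^{-\mu}=K_a+k^a$ with $K_a:=|x|^{-\mu}\mathbf 1_{\{|x|^{-\mu}>a\}}$ and $k^a$ the remainder, a direct computation with the distribution function gives $\|K_a\|_1\le C\,a^{1-q}$ and, using $s'>q$ (which is equivalent to the standing hypothesis $1/s+1/q>1$), $\|k^a\|_{s'}\le C\,a^{1-q/s'}$. Young's inequality then yields $\|k^a\ast f\|_\infty\le\|k^a\|_{s'}\|f\|_s$ and $\|K_a\ast f\|_s\le\|K_a\|_1\|f\|_s$. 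Choosing $a$ so that $\|k^a\ast f\|_\infty\le t/2$ annihilates the low-height part, and Chebyshev applied to the high-height part gives, after substituting the optimal $a(t)$ and a short computation which uses $s(s'-1)=s'$,
\[
|\{|T_\mu f|>t\}|\le\Big(\tfrac{C\|f\|_s}{t}\Big)^{r'},
\]
that is $\|T_\mu f\|_{L^{r',\infty}}\le C\|f\|_s$, where the emerging power is exactly $r'=qs'/(s'-q)$ since $1/r'=1/q+1/s-1$.

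The main obstacle is the passage from this weak-type estimate to the strong $L^{r'}$ bound: because $|x|^{-\mu}$ lies only in $L^{q,\infty}$ and not in $L^q$, the ordinary Young convolution inequality is unavailable and the kernel splitting intrinsically produces only a Lorentz $L^{r',\infty}$ conclusion. I would overcome this by the Marcinkiewicz interpolation theorem. The weak-type bound holds not just for the given $s$ but for every exponent in an open neighbourhood, with target exponent $r'(\sigma)$ governed by the same balance $1/r'(\sigma)=1/\sigma+1/q-1$; crucially this relation is affine in $1/\sigma$, so interpolating between two endpoints $s_0<s<s_1$ produces the intermediate target that is \emph{exactly} $r'(s)$. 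One checks the hypothesis $r'(s_i)\ge s_i$ (equivalent to $q\ge1$, which holds), and Marcinkiewicz upgrades the family of weak-type $(s_i,r'(s_i))$ estimates to the strong-type bound $\|T_\mu f\|_{r'}\le C(s,N,\mu,r)\|f\|_s$. Combined with the Hölder reduction of the first paragraph, this yields the asserted inequality with $C(s,N,\mu,r)$ independent of $f$ and $g$.

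Finally, as an alternative that additionally produces the sharp constant, one can follow the rearrangement route of Lieb \cite{LL}: by the Riesz rearrangement inequality the functional $\int\int f(x)|x-y|^{-\mu}g(y)$ only increases when $f,g$ are replaced by their symmetric decreasing rearrangements (the kernel being already radially decreasing), reducing matters to radial monotone data, whereupon a layer-cake decomposition together with the scaling homogeneity of $|x|^{-\mu}$ collapses the estimate to a single dilation-invariant inequality. I would, however, keep the interpolation argument as the primary proof, since it is the most self-contained and the sharp constant is not needed in the sequel.
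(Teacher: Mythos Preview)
Your argument is correct: the reduction by H\"older to the mapping property of the Riesz potential, the weak-type $(s,r')$ bound via the kernel splitting, and the upgrade to strong type by Marcinkiewicz interpolation is a standard and complete proof of HLS. The alternative rearrangement route you sketch is also sound.

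However, there is nothing to compare against: the paper does not prove Proposition~\ref{HLS}. It is stated in the preliminaries section as a classical result, with references to \cite{LL} and to \cite{BS} for the interpolation-spaces approach, and is used thereafter as a black box. So your proposal is not an alternative to the paper's proof but rather a self-contained justification of a result the authors simply quote. Incidentally, the interpolation argument you give is essentially the one alluded to by the citation of \cite{BS}, while the rearrangement argument you mention as an alternative is the Lieb proof in \cite{LL}.
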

\begin{Rem}	
Note that the Sobolev inequality \eqref{Si} is equivalent, by duality, to a special case of the HLS inequality \eqref{HLS} (see \cite{B}). Actually, take $\mu=N-2$ and $s=r=2^*$: then \eqref{HLS} says that the inclusion $L^{\frac{2N}{N+2}}(\R^N)\hookrightarrow H^{-1}(\R^N)$ is continuous, and so, by duality, its counterpart, $H^1(\R^N)\hookrightarrow L^{2^*}(\R^N)$. 
\end{Rem}
\noindent By exploiting a limiting procedure as $\mu \to 0$, one can prove the so-called logarithmic Hardy--Littlewood--Sobolev inequality, whose main feature is the presence of a sign-changing logarithmic kernel, see \cite{B,CL,dPDF}.

\begin{Prop}[Logarithmic HLS
	inequality]\label{LHLS}Let $f,g$ be two nonnegative
	functions belonging to $L\ln L(\R^N)$, such that $\int f \log(1+|x|)<\infty, \int g \log(1+|x|)<\infty $ and $\|f\|_1=\|g\|_1=1$. There exists a constant $C_N$,
	independent of $f,g$, such that
\begin{equation}\label{LHLSeq}
	2N\int_{\R^N}[\log \frac{1}{|x|}\ast f(x)]g(x)\leq 
	C_N+\int_{\mathbb R^N}f\log f
	dx+\int_{\mathbb R^N}g\log g dx\ .
	\end{equation}
\end{Prop}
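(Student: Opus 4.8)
The plan is to obtain \eqref{LHLSeq} as the first-order term, in the kernel exponent $\mu\to0^+$, of the sharp Hardy--Littlewood--Sobolev inequality of Proposition~\ref{HLS}, which is the ``limiting procedure'' mentioned above. Two preliminary reductions are in order. First, I normalize so that $\|f\|_1=\|g\|_1=1$ and observe that, under the hypotheses, the entropy $\int_{\R^N}f\log f\,dx$ is a well-defined real number: it is finite from above because $f\in L\log L(\R^N)$ bounds $\int f\log^+f$, and it is bounded from below by comparison with a reference probability density $\rho(x)=c_N(1+|x|)^{-N-1}$, since the nonnegativity of the relative entropy gives $\int f\log f\ge\int f\log\rho=\log c_N-(N+1)\int f\log(1+|x|)>-\infty$, the last integral being finite by the log-moment hypothesis; the same holds for $g$. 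Second, since $L\log L$ does not embed into any $L^{s}$ with $s>1$, Proposition~\ref{HLS} cannot be applied directly to $f,g$; I will therefore first prove \eqref{LHLSeq} for bounded, compactly supported densities and recover the general case afterwards by approximation.

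For such nice $f,g$ I apply Proposition~\ref{HLS} along the symmetric diagonal $s=r=s(\mu):=\tfrac{2N}{2N-\mu}$, which satisfies $1/s+\mu/N+1/r=2$ for every $\mu\in(0,N)$, together with \emph{Lieb's sharp constant}
\[
C(\mu)=\pi^{\mu/2}\,\frac{\Gamma\!\big(\tfrac{N-\mu}{2}\big)}{\Gamma\!\big(N-\tfrac{\mu}{2}\big)}\left(\frac{\Gamma(N/2)}{\Gamma(N)}\right)^{\!-1+\mu/N}.
\]
The decisive structural input is that $C(\mu)\to1$ as $\mu\to0^+$, because the $\Gamma$-factors cancel in the limit, and that $C$ is smooth near $0$, so $C(\mu)=1+c_1\mu+o(\mu)$ with $c_1=C'(0)$ computable by logarithmic differentiation in terms of $\log\pi$ and digamma values. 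Sharpness is essential here: any constant with $C(0)>1$ would leave an uncancelled $O(1)$ term after dividing by $\mu$ below, and the scheme would break down.

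I then expand both sides of $\int\!\!\int f(x)g(y)|x-y|^{-\mu}\,dx\,dy\le C(\mu)\|f\|_{s(\mu)}\|g\|_{s(\mu)}$ to first order in $\mu$. On the left, using $\tfrac1\mu\big(|x-y|^{-\mu}-1\big)\to\log\tfrac1{|x-y|}$ and $\|f\|_1=\|g\|_1=1$ yields
\[
\frac1\mu\Big(\int\!\!\int f(x)g(y)|x-y|^{-\mu}\,dx\,dy-1\Big)\;\longrightarrow\;\int_{\R^N}\Big[\log\tfrac1{|\cdot|}\ast f\Big]g\,dx ,
\]
the interchange of limit and integral being immediate since $f,g$ are bounded with compact support. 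On the right, $s(\mu)-1=\tfrac{\mu}{2N}+o(\mu)$ gives $\|f\|_{s(\mu)}=1+\tfrac{\mu}{2N}\int f\log f+o(\mu)$, and likewise for $g$, so that $\tfrac1\mu\big(C(\mu)\|f\|_{s}\|g\|_{s}-1\big)\to c_1+\tfrac1{2N}\big(\int f\log f+\int g\log g\big)$. Dividing the inequality by $\mu>0$, letting $\mu\to0^+$ and multiplying by $2N$ then gives \eqref{LHLSeq} with $C_N=2Nc_1$ for all nice densities.

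The main obstacle is the final passage from nice densities to arbitrary $f,g\in L\log L(\R^N)$ with finite log-moments, made delicate by the sign change of the logarithmic kernel. I would truncate, $f_k=\min(f,k)\mathbf{1}_{B_k}$, renormalize to unit mass, and similarly for $g$, and pass to the limit in the already-proven inequality $\mathrm{LHS}(f_k,g_k)\le C_N+\int f_k\log f_k+\int g_k\log g_k$. The right-hand side converges to the correct limit by monotone/dominated convergence for the entropies (again using the two-sided bounds above). For the left-hand side it suffices to prove lower semicontinuity, $\int[\log\tfrac1{|\cdot|}\ast f]g\le\liminf_k\int[\log\tfrac1{|\cdot|}\ast f_k]g_k$: splitting the kernel at $|x-y|=1$, the positive (near-diagonal) contribution is lower semicontinuous by Fatou, while the negative contribution over $\{|x-y|>1\}$, dominated by $C+\log(1+|x|)+\log(1+|y|)$, converges by dominated convergence precisely thanks to the log-moment hypotheses. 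Combining the two estimates preserves the inequality in the limit and completes the proof; the constant so produced coincides, in this symmetric case, with the sharp one of Carlen--Loss.
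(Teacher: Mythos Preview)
Your argument is correct and is precisely the ``limiting procedure as $\mu\to 0$'' that the paper alludes to just before the statement of the proposition. Note, however, that the paper does \emph{not} actually prove Proposition~\ref{LHLS}: it is stated as a preliminary result with references to Beckner~\cite{B}, Carlen--Loss~\cite{CL} and del~Pino--Dolbeault--Filippas--Tertikas~\cite{dPDF}, and no proof is given in the paper itself. Your proposal thus reconstructs the standard proof from those references rather than an argument present in the paper, and the essential ingredients you use---the sharp HLS constant with $C(0)=1$, the first-order expansion of $\|f\|_{s(\mu)}$ about $s=1$ producing the entropy, and the kernel splitting at $|x-y|=1$ to pass to the limit under the sign-changing logarithm via Fatou plus dominated convergence (the latter controlled exactly by the log-moment hypothesis)---are the right ones and match the approach of Carlen--Loss.
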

\begin{Rem}
Let us stress the feature of the log kernel, which has variable sign, and it is unbounded both in $0$ and at $+\infty$. This justifies the presence of the additional condition, $f, g \in L\ln L(\R^N)$ in order to have the inequality \eqref{LHLSeq} well defined, and in particular that no cancellation of infinities occurs. However, this does not imply the boundedness of $\int f\log f, \int g\log g$, but only of the positive parts $\int f\log_+f$, $\int g\log_+g$. The further weight conditions $\int f\log(1+|x|),\int g\log(1+|x|)<\infty$  make both the two sides of inequality \eqref{LHLSeq} finite. 
\end{Rem}

\section{A log-mass weighted Pohozaev--Trudinger type inequality}\label{log_p_t}
\noindent This Section is devoted  to prove a Pohozaev--Trudinger type inequality in the whole plane $\R^2$, with a logarithmic weight which appears only in the mass component of the energy. Here, the logarithmic weight plays a role only as $|x|\to +\infty$, for which we consider as prototype weight $w_0=\log(e+|x|)$.  
\noindent On the other hand, it is well known from \cite{R,ISHI,CST}, how the growth near zero is a key ingredient in proving Pohozaev--Trudinger type inequalities on unbounded domains, since it is strictly related to vanishing phenomena.  Here we aim at proving a fundamental inequality which will provide a suitable variational setting for \eqref{P}. Let us point out that the presence of an increasing weight prevents one to use rearrangement arguments.

\subsection{Proof of Theorem \ref{thm_A1}}
 Let us first perform a change of variables which enables one to pass from $H_{w_0}^1(\R^2)$ to functions in $H^1(\R^2)$ . Note that the inverse transformation does not turn out to be explicit and this is why we can not expect to prove directly our inequality. 

\noindent Let us use polar coordinates in $\R^2$:
$$
x=(x_1,x_2)=|x|(\cos \theta, \sin \theta), \ \ \hbox{ where } \ |x|=\sqrt{x^2_1+ x_2^2}
$$
We perform the change of variable
$$
y=(y_1, y_2)=|x|\sqrt{\log(e+|x|)}(\cos \theta, \sin \theta)
$$
which acts only on the radial part of any point in $\R^2$, equivalently
$$
T(|x|)=|y|, \ \ \frac y{|y|}=\frac x{|x|}, \ \ |y|=|x|\sqrt{\log(e+|x|)}
$$
In order to simplify the notation, set $r=|x|$ and $s=|y|$, so that the transformation  becomes $s=T(r)=r\sqrt{\log(e +r)}$. Note that 
$$
T'(r)=\frac{2\log(e+r)+\frac{r}{e+r}}{2\sqrt{\log(e+r)}}>0, \ \ T(0)=0, \ \lim_{r\to +\infty} T(r)=+\infty
$$
and thus $T$ is invertible on $\R^2$, though the inverse map is not explicitly known. \\
%The following estimates will be useful
%$$
%\frac{s}{\sqrt{\log(e+s)}}\leq T^{-1}(s) \leq  \frac{\sqrt 2 s}{\sqrt{\log(e+s)}}
%$$
%which can be proved directly.\\
\noindent Define
\[
v(y):=u(x), \ \ \hbox{that is, } \ \ v(y)=u\left(T^{-1}(|y|)\cos \theta, T^{-1}(|y|)\sin \theta\right) 
\]
or, equivalently
\[ \ u(r\cos \theta, r\sin \theta)=v\left(T(r)\cos \theta, T(r)\sin \theta\right)\ .
\]
Then, by a direct calculation, if 
$$
w(r, \theta):=u(r\cos \theta, r \sin \theta) \  \ \widetilde w(s, \theta):=v\left(s\cos \theta, s \sin \theta \right), \ \ 
w(r, \theta)= \widetilde w\left( T(r), \theta \right)
$$
we have
\[
w_r(r, \theta)= \widetilde w_s\left( T(r), \theta \right)T'(r), \  w_{\theta}\left( T(r), \theta \right)=\widetilde w_{\theta}\left( T(r), \theta \right)
\]
so that
\begin{multline*}
\int_{\R^2}|\nabla v|^2dy_1dy_2=\int_{0}^{2\pi}\int_0^{+\infty}\left[\widetilde w_s^2+\frac{\widetilde w^2_\theta}{s^2}\right]sdsd\theta\\= \int_{0}^{2\pi}\int_0^{+\infty}\left[\widetilde w_s^2(T(r), \theta)+\frac{\widetilde w^2_\theta(T(r), \theta)}{T^2(r)}\right]T'(r)T(r)drd\theta\\
= \int_{0}^{2\pi}\int_0^{+\infty}\left[ w_r^2(r, \theta)\cdot \frac{1}{[T'(r)]^2}+\frac{ w^2_\theta(r, \theta)}{r^2}\cdot \frac{r^2}{T^2(r)}\right]T'(r)T(r)drd\theta\ .
\end{multline*}
Now,
\[
\frac{1}{[T'(r)]^2}=\frac{\log(e+r)}{\left[\log(e+r)+\frac{r}{2(e+r)}\right]^2}, \ \ \frac{r^2}{T^2(r)}=\frac{1}{\log(e+r)}
\]
so that
\[
\frac 13 \frac{r^2}{T^2(r)}< \frac{1}{[T'(r)]^2}< \frac{r^2}{T^2(r)}\ .
\]
Then,
\begin{multline*}
\frac 13 \int_{0}^{2\pi}\int_0^{+\infty}\left[ w_r^2+\frac{ w^2_\theta}{r^2}\right] \frac{r^2T'(r)}{T(r)}drd\theta\leq \int_{\R^2}|\nabla v|^2dy_1dy_2\\
\leq \int_{0}^{2\pi}\int_0^{+\infty}\left[ w_r^2+\frac{ w^2_\theta}{r^2}\right] \frac{r^2T'(r)}{T(r)}drd\theta\ .
\end{multline*}
Noting that
\[
\frac{r^2T'(r)}{T(r)}= r\left[1+ \frac r{2(e+r)\log(e+r)}\right] \Longrightarrow  r<\frac{r^2T'(r)}{T(r)}<2 r
\]
we eventually get
\begin{equation*}\label{Tgrad}
\frac 13 \int_{\R^2}|\nabla u|^2dx_1dx_2<\int_{\R^2}|\nabla v|^2dy_1dy_2<2\int_{\R^2}|\nabla u|^2dx_1dx_2
\end{equation*}
On the other hand,
\begin{multline*}
\int_{\R^2}v^2dy=\int_0^{2\pi}\int_0^{\infty} \widetilde w^2(s, \theta)sdsd\theta= \int_0^{2\pi}\int_0^{\infty} \widetilde w^2(T(r), \theta)T'(r)T(r)drd\theta\\=\int_0^{2\pi}\int_0^{\infty} w^2(r, \theta)T'(r)T(r)drd\theta\ .
\end{multline*}
Since
\begin{multline*}
T'(r)T(r)=r\left[\log(e+r)+\frac{r}{2(e+r)}\right]\\
=r\log(e+r)\left[1+\frac{r}{2(e+r)\log(e+r)}\right]=\frac{r^2T'(r)}{T(r)}\log(e+r)
\end{multline*}
we conclude that
\[
\int_{\R^2}v^2dy=\int_0^{2\pi}\int_0^{\infty} w^2\frac{r^2T'(r)}{T(r)}\log(e+r) drd\theta
\]
and, in turn
\[
\int_{\R^2}u^2\log(e+|x|)dx<\int_{\R^2}v^2dy<2\int_{\R^2}u^2\log(e+|x|)dx\ .
\]
Finally,
\begin{equation}
\frac 13\|u\|_{w_0}^2<\|v\|^2=\|\nabla v\|_2^2+\|v\|_2^2<2\|u\|_{w_0}^2\ .
\label{normuv}
\end{equation}
We have then proved that the map
\begin{eqnarray*}
 \mathfrak{T}: H^1_{w_0}(\mathbb R^2)&\to& H^1_0(\mathbb R^2)\\
 u&\mapsto& v
\end{eqnarray*}
is an invertible, continuous and with continuous inverse map. Then,
\begin{multline*}
 \int_{\R^2}\left(e^{ \alpha u^2}-1\right)\log(e+|x|)dx=\int_0^{2\pi}\int_0^{+\infty}\left(e^{ \alpha u^2(r\cos \theta, r\sin \theta)}-1\right)\log(e+r)rdrd\theta\\
 =\int_0^{2\pi}\int_0^{+\infty}\left(e^{ \alpha v^2(T(r)\cos \theta, T(r)\sin \theta)}-1\right)\frac{\log(e+r)r}{T'(r)T(r)}T'(r)T(r)drd\theta\\
  \leq \int_0^{2\pi}\int_0^{+\infty}\left(e^{ \alpha v^2(\rho\cos \theta, \rho\sin \theta)}-1\right)\rho d\rho d\theta
  =  \int_{\R^2}\left(e^{ \alpha v^2}-1\right)dx <+\infty
\end{multline*}
by  \cite{R} for any $\alpha>0$.
The uniform bound \eqref{wM} follows directly from  \eqref{normuv}, as for any $u\in H^1_{w}$ and $\alpha \leq 2\pi$ one has 
\begin{multline*}
 \int_{\R^2}\left(e^{ 2\pi u^2/\|u\|_{w_0}^2}-1\right)\log(e+|x|)dx\\
 =\int_0^{2\pi}\int_0^{+\infty}\left(e^{ 2\pi u^2(r\cos \theta, r\sin \theta)/\|u\|_{w_0}^2}-1\right)\log(e+r)rdrd\theta\\
 =\int_0^{2\pi}\int_0^{+\infty}\left(e^{ 2\pi v^2(T(r)\cos \theta, T(r)\sin \theta)/\|u\|_{w_0}^2}-1\right)\frac{\log(e+r)r}{T'(r)T(r)}T'(r)T(r)drd\theta\\
  \leq \int_0^{2\pi}\int_0^{+\infty}\left(e^{ 4\pi v^2(\rho\cos \theta, \rho\sin \theta)/\|v\|^2}-1\right)\rho d\rho d\theta
  =  \int_{\R^2}\left(e^{ 4\pi v^2/\|v\|^2}-1\right)dx <C(\alpha)
\end{multline*}
again by  \cite{R}.

\noindent A consequence of this embedding result is the continuity of the weighted Pohozaev--Trudinger functional on $H^1_{w_0}(\R^2)$, namely we have 
\begin{cor}\label{cor-wcontinuity}
For any $\alpha >0$ the functional
\begin{eqnarray*}
H^1_{w_0}(\R^2)& \longrightarrow & \R\\
u &\longmapsto & \int_{\R^2}\left(e^{\alpha u^2}-1\right)\log(e+|x|) dx
\end{eqnarray*}
is continuous.
\end{cor}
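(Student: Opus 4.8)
The plan is to reduce the statement, by means of the homeomorphism $\mathfrak{T}$ constructed in the proof of Theorem~\ref{thm_A1}, to the continuity of the ordinary Pohozaev--Trudinger functional on $H^1(\R^2)$, and then to argue as in the Remark of Section~\ref{prelim} concerning the continuity of $u\mapsto\int_{\R^2}F(x,u)\,dx$. First I would repeat the change of variables $\rho=T(r)$ of that proof, but now \emph{keeping} the positive Jacobian factor that was there simply estimated by $1$: this yields, for every $u\in H^1_{w_0}(\R^2)$ and $v:=\mathfrak{T}u\in H^1(\R^2)$, the exact identity
\[
\int_{\R^2}\bigl(e^{\alpha u^2}-1\bigr)\log(e+|x|)\,dx=\int_{\R^2}\bigl(e^{\alpha v^2}-1\bigr)\,g(|y|)\,dy ,
\]
where $g(|y|)=\bigl(1+\tfrac{r}{2(e+r)\log(e+r)}\bigr)^{-1}\big|_{r=T^{-1}(|y|)}$ is a continuous weight with $\tfrac12\le g\le1$ on $\R^2$. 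Since by \eqref{normuv} the map $u\mapsto v$ is a linear homeomorphism of $H^1_{w_0}(\R^2)$ onto $H^1(\R^2)$, the functional in the statement is continuous as soon as $v\mapsto\int_{\R^2}(e^{\alpha v^2}-1)\,g(|y|)\,dy$ is continuous on $H^1(\R^2)$; and because $0\le g\le1$, this in turn is implied by: if $v_n\to v$ in $H^1(\R^2)$ then $e^{\alpha v_n^2}-1\to e^{\alpha v^2}-1$ in $L^1(\R^2)$, for every $\alpha>0$.

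To establish this last convergence I would follow the Remark of Section~\ref{prelim}. Writing $e^{\alpha v_n^2}-e^{\alpha v^2}=e^{\alpha v^2}\bigl(e^{\alpha(v_n^2-v^2)}-1\bigr)$ and $e^{\alpha v^2}=(e^{\alpha v^2}-1)+1$, it suffices to estimate $\int(e^{\alpha v^2}-1)\,\bigl|e^{\alpha(v_n^2-v^2)}-1\bigr|$ and $\int\bigl|e^{\alpha(v_n^2-v^2)}-1\bigr|$. For a fixed $v$ one has $(e^{\alpha v^2}-1)^{r'}\le e^{r'\alpha v^2}-1\in L^1(\R^2)$ for every $r'\ge1$ (this is \eqref{wT} with trivial weight, or \eqref{orlicz} together with a cut-off on the tail), and $\bigl|e^{\alpha(v_n^2-v^2)}-1\bigr|\le e^{\alpha|v_n^2-v^2|}-1$; hence, by Hölder's inequality, both quantities are controlled once one proves $\int_{\R^2}\bigl(e^{\gamma|v_n^2-v^2|}-1\bigr)\,dx\to0$ for every fixed $\gamma>0$. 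To get this, I would apply the Young inequality $|v_n^2-v^2|\le|v_n-v|\,|v_n+v|\le\tfrac{\mu_n}{2}(v_n-v)^2+\tfrac1{2\mu_n}(v_n+v)^2$ with the $n$-dependent parameter $\mu_n:=\|v_n-v\|_{H^1}^{-1}\to+\infty$, obtaining $e^{\gamma|v_n^2-v^2|}-1\le e^{a_n}e^{b_n}-1=(e^{a_n}-1)(e^{b_n}-1)+(e^{a_n}-1)+(e^{b_n}-1)$ with $a_n=\tfrac{\gamma\mu_n}{2}(v_n-v)^2$ and $b_n=\tfrac{\gamma}{2\mu_n}(v_n+v)^2$; the corresponding exponential masses $\tfrac{\gamma\mu_n}{2}\|v_n-v\|_{H^1}^2=\tfrac{\gamma}{2}\|v_n-v\|_{H^1}$ and $\tfrac{\gamma}{2\mu_n}\|v_n+v\|_{H^1}^2=\tfrac{\gamma}{2}\|v_n-v\|_{H^1}\,\|v_n+v\|_{H^1}^2$ both tend to $0$, since $\|v_n+v\|_{H^1}$ stays bounded. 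Using also $(e^{a_n}-1)(e^{b_n}-1)\le\tfrac12(e^{2a_n}-1)+\tfrac12(e^{2b_n}-1)$, everything is finally reduced to one elementary lemma: \emph{if $\|w_n\|_{H^1(\R^2)}\le1$ and $c_n\to0$, then $\int_{\R^2}(e^{c_nw_n^2}-1)\,dx\to0$}; this follows from $e^{c_nw_n^2}-1\le c_nw_n^2e^{c_nw_n^2}$, the splitting $w_n^2e^{c_nw_n^2}=w_n^2(e^{c_nw_n^2}-1)+w_n^2$, the Sobolev bound $\|w_n\|_{L^4}\le C$, and the subcritical Trudinger--Moser/Ruf inequality \eqref{Ri} (equivalently \eqref{caoineq}) applied with the exponent $2c_n\le4\pi$.

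The delicate point is precisely the step $\int_{\R^2}(e^{\gamma|v_n^2-v^2|}-1)\,dx\to0$: in the product $v_n^2-v^2=(v_n-v)(v_n+v)$ only the first factor is small in $H^1$, the second being merely bounded, and the naive exponential Hölder estimate produces over the unbounded domain $\R^2$ the non-integrable contribution of $e^0=1$. The remedy is the $n$-dependent Young parameter $\mu_n=\|v_n-v\|_{H^1}^{-1}$, which is available exactly because $\|v_n+v\|_{H^1}$ stays bounded and which renders both resulting exponential terms subcritical with vanishing mass; after that, only the routine toolbox recalled in Section~\ref{prelim} is needed (Hölder's inequality, $|e^a-e^b|\le|a-b|\,e^{\max(a,b)}$, $(e^x-1)^t\le e^{tx}-1$, and the subcritical Trudinger--Moser bound). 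Putting these together and composing with $\mathfrak{T}$, using that $g$ is bounded, yields the continuity of the weighted Pohozaev--Trudinger functional on $H^1_{w_0}(\R^2)$.
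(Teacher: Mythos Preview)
The paper does not actually give a proof of this corollary; it is simply stated as an immediate consequence of the embedding Theorem~\ref{thm_A1}. Your argument is correct and fills in the details in the way the paper clearly intends: via the homeomorphism $\mathfrak{T}$ built in the proof of Theorem~\ref{thm_A1} one reduces the weighted functional to $\displaystyle v\mapsto\int_{\R^2}(e^{\alpha v^2}-1)\,g(|y|)\,dy$ on $H^1(\R^2)$ with $0<g\le 1$ bounded, and hence to the $L^1(\R^2)$ convergence $e^{\alpha v_n^2}\to e^{\alpha v^2}$ whenever $v_n\to v$ in $H^1(\R^2)$.

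The only part that goes beyond what the paper sketches (here and in the Remark of Section~\ref{prelim}) is your treatment of $\displaystyle\int_{\R^2}\bigl(e^{\gamma|v_n^2-v^2|}-1\bigr)\,dx\to 0$. The paper's Remark handles this step rather informally; your $n$-dependent Young parameter $\mu_n=\|v_n-v\|_{H^1}^{-1}$ is a clean and rigorous way to deal with the difficulty that in $v_n^2-v^2=(v_n-v)(v_n+v)$ only the first factor is small, and it reduces everything to the elementary lemma on subcritical exponentials with vanishing mass, which you prove correctly from \eqref{Ri}. So your route coincides with the paper's implicit one at the level of strategy, while supplying a more careful justification of the $L^1$ convergence step.
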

\begin{Rem}\label{remA3}
The value $2\pi$ in \eqref{wM} is not sharp and the problem of establishing a sharp version of \eqref{wM} in the spirit of Moser \cite{M} remains essentially open. 
\end{Rem}

\subsection{The case $q>2$} 

\noindent Let us now consider the more general case of a growth function $F(x,s)\asymp s^q$ as $s\to 0$ with $q>2$. In this case, the natural weighted Sobolev space turns out to be $H^1L^q_{w_0}(\R^2)$, defined as the completion of smooth compactly supported functions with respect to the norm 
\[
\|u\|_{w_0}^2=\|\nabla u\|_2^2+\|u\|_{L^q(w_0dx)}^{2/q}=\int_{\R^2}|\nabla u|^2dx+\int_{\R^2}|u|^q\log(e+|x|)dx\ .
\]
%as in \eqref{def_Hw}, let  $H_{q,w_0}^1(\mathbb R^2)$ be the space of measurable functions defined as
%\begin{equation*}
%H_{q,w_0}^1(\mathbb R^2):=\left\{u\in H^1(\mathbb R^2): \int_{\mathbb R^2}|u|^q\log(e+|x|)dx<+\infty\right\}
 %\end{equation*}
 %equipped with the norm
%\begin{equation*}
%\|u\|_{H^1_{q,w_0}}^2:=\|u\|_{H^1}^2+ \|u\|_{q,w_0}^{2/q}=\int_{\mathbb R^2}|\nabla u|^2dx+\int_{\mathbb R^2}u^2dx + \left\{\int_{\mathbb R^2}|u|^q\log (e+|x|))dx\right\}^{2/q}
%\end{equation*}
\begin{thm}\label{thm_Aq}
Let $F(x,s)$ sutisfy assumption $(f_1)$. Then, the space $H^1L^q_{w_0}(\R^2)$ embeds into the weighted Orlicz space $L_{F}(\R^2, \log(e+|x|)dx)$. More precisely, 
\begin{equation}\label{wTq}
\int_{\R^2}F(x,\alpha |u|)\log(e+|x|)dx<+\infty
\end{equation}
for any $u\in H^1L^q_{w_0}(\R^2)$ and any $\alpha >0$.

\noindent Furthermore, for any $\alpha \leq 1/\sqrt{q}$ one has 
\begin{equation}\label{wMq}
\sup_{\|u\|_{w_0}^2\leq 1} \int_{\R^2} F\left(x,\alpha |u|\right)\log(e+|x|)dx <+\infty\ .
\end{equation}
\end{thm}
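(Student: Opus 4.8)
The plan is to deduce Theorem~\ref{thm_Aq} from Theorem~\ref{thm_A1} by a truncation argument which isolates the region where $F(x,\cdot)$ is genuinely of exponential type. Two preliminary observations set the stage. First, since $c(x)$ is bounded, the estimate \eqref{estF} holds for $F(x,s)$ in place of $F(s)$ up to a multiplicative constant, and I will use it as $0\le F(x,s)\le C s^{q}$ for $s\le s_{0}$ and $0\le F(x,s)\le C s^{p-1}e^{4\pi s^{2}}$ for $s>s_{0}$. Second, the threshold $s_{0}$ may be chosen as large as we please: on a compact interval $[s_{0},\bar s_{0}]$ the continuous function $F(x,\cdot)$ is bounded, hence $\le C_{\bar s_{0}}s^{q}$ there, while the exponential bound for $s>s_{0}$ persists a fortiori for $s>\bar s_{0}$. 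This freedom is what will pin the threshold in \eqref{wMq} to the value $1/\sqrt q$. We also record that $\|\cdot\|_{w_{0}}$ is genuinely homogeneous, $\|\lambda u\|_{w_{0}}=|\lambda|\,\|u\|_{w_{0}}$, precisely because of the exponent $2/q$ in its definition.

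Fix $\alpha>0$ and $u\in H^{1}L^{q}_{w_{0}}(\R^{2})$, and split $\R^{2}=A\cup B$ with $A=\{\alpha|u|\le s_{0}\}$, $B=\{\alpha|u|>s_{0}\}$. On $A$ one has at once $\int_{A}F(x,\alpha|u|)\,w_{0}\,dx\le C\alpha^{q}\int_{\R^{2}}|u|^{q}w_{0}\,dx<\infty$. On $B$ we pass to the truncation $\widetilde u:=(|u|-s_{0}/\alpha)_{+}$, which lies in $H^{1}_{w_{0}}(\R^{2})$ by a routine cutoff-and-mollification argument, using $|\nabla\widetilde u|\le|\nabla u|$ together with the facts that, since $2-q<0$ and $|u|>s_{0}/\alpha$ on $B$,
\[
\int_{B}\widetilde u^{2}\,w_{0}\,dx\le\int_{B}u^{2}w_{0}\,dx\le(\alpha/s_{0})^{q-2}\!\int_{B}|u|^{q}w_{0}\,dx,\qquad \int_{B}w_{0}\,dx\le(\alpha/s_{0})^{q}\!\int_{B}|u|^{q}w_{0}\,dx ,
\]
so in particular $B$ has finite $w_{0}$-measure. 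On $B$ we have $|u|=\widetilde u+s_{0}/\alpha$, hence $u^{2}\le(1+\delta)\widetilde u^{2}+C_{\delta}$ for any $\delta>0$; combining this with the elementary bound $(\alpha|u|)^{p-1}\le C_{\varepsilon}e^{\varepsilon\alpha^{2}u^{2}}$ (valid on $B$), the growth bound on $F$ gives there
\[
F(x,\alpha|u|)\le C(\alpha|u|)^{p-1}e^{4\pi\alpha^{2}u^{2}}\le C_{\delta,\varepsilon}\,e^{\beta\widetilde u^{2}},\qquad \beta:=(4\pi+\varepsilon)\alpha^{2}(1+\delta),
\]
and therefore $\int_{B}F(x,\alpha|u|)\,w_{0}\,dx\le C_{\delta,\varepsilon}\big(\int_{B}w_{0}\,dx+\int_{\R^{2}}(e^{\beta\widetilde u^{2}}-1)\,w_{0}\,dx\big)$.

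To prove \eqref{wTq} it now suffices to invoke \eqref{wT} of Theorem~\ref{thm_A1}, which yields $\int_{\R^{2}}(e^{\beta\widetilde u^{2}}-1)\,w_{0}\,dx<\infty$ for every $\beta>0$; together with the finite contribution of $A$ this proves \eqref{wTq} for all $\alpha>0$. The Orlicz embedding $H^{1}L^{q}_{w_{0}}(\R^{2})\hookrightarrow L_{F}(\R^{2},\log(e+|x|)dx)$ then follows from \eqref{wMq} by the standard argument for Luxemburg norms, using the homogeneity of $\|\cdot\|_{w_{0}}$ and the convexity of $F$.

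For the uniform bound \eqref{wMq} we run the same decomposition quantitatively. If $\|u\|_{w_{0}}^{2}\le1$ then $\|\nabla u\|_{2}^{2}\le1$ and $\int_{\R^{2}}|u|^{q}w_{0}\,dx\le1$, so the $A$-contribution is $\le C\alpha^{q}$, while
\[
\|\widetilde u\|_{w_{0}}^{2}=\|\nabla\widetilde u\|_{2}^{2}+\int_{\R^{2}}\widetilde u^{2}w_{0}\,dx\le 1+\eta,\qquad \eta:=(\alpha/s_{0})^{q-2};
\]
writing $\widetilde u=\|\widetilde u\|_{w_{0}}\,g$ with $\|g\|_{w_{0}}=1$, monotonicity in the exponent and \eqref{wM} give
\[
\int_{\R^{2}}(e^{\beta\widetilde u^{2}}-1)\,w_{0}\,dx=\int_{\R^{2}}\big(e^{\beta\|\widetilde u\|_{w_{0}}^{2}g^{2}}-1\big)w_{0}\,dx\le\sup_{\|g\|_{w_{0}}^{2}\le1}\int_{\R^{2}}(e^{2\pi g^{2}}-1)\,w_{0}\,dx<\infty
\]
\emph{provided} $\beta\|\widetilde u\|_{w_{0}}^{2}\le2\pi$, i.e. $(4\pi+\varepsilon)\alpha^{2}(1+\delta)(1+\eta)\le2\pi$. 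This is where the assumption $q>2$ is essential: for $\alpha\le1/\sqrt q$ one has $4\pi\alpha^{2}\le4\pi/q<2\pi$ with \emph{strict} inequality, so it is enough to fix $\delta,\varepsilon>0$ small and then, exploiting the freedom in $s_{0}$, take $s_{0}$ so large that $\eta$ is small; the required inequality then holds and \eqref{wMq} follows. The delicate point of the whole proof is precisely this bookkeeping: one must fit the three error factors — $(1+\delta)$ from completing the square in $|u|=\widetilde u+s_{0}/\alpha$, $(1+\varepsilon)$ from absorbing the polynomial prefactor $(\alpha|u|)^{p-1}$ into the exponential, and $(1+\eta)$ from the nonzero weighted $L^{2}$-norm of the truncation — inside the slack $q/2>1$ that $q>2$ makes available, and the observation that enlarging $s_{0}$ shrinks $\eta$ without affecting the constant in the far-field growth bound is exactly what lets this be done while keeping the clean threshold $\alpha\le1/\sqrt q$.
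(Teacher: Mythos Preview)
Your proof is correct and arrives at the same conclusion as the paper, but by a genuinely different route. The paper's argument builds the auxiliary function
\[
v=\begin{cases} |u|^{q/2}, & |u|<1,\\ |u|, & |u|\ge 1,\end{cases}
\]
which converts the $L^{q}(w_{0})$-mass of $u$ directly into $L^{2}(w_{0})$-mass of $v$, obtains a bound $\|v\|_{w_{0}}^{2}\le \tfrac{q}{2}\|u\|_{w_{0}}^{2}$, and then uses H\"older's inequality to lower the exponent $4\pi\alpha^{2}v^{2}$ to $4\pi v^{2}/q\le 2\pi v^{2}/\|v\|_{w_{0}}^{2}$, so that Theorem~\ref{thm_A1} applies at the sharp edge. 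You instead use an \emph{additive} truncation $\widetilde u=(|u|-s_{0}/\alpha)_{+}$, bound $u^{2}\le (1+\delta)\widetilde u^{2}+C_{\delta}$, absorb the polynomial prefactor into the exponential, and then fit the resulting $\beta\|\widetilde u\|_{w_{0}}^{2}$ inside $2\pi$ by exploiting the strict slack $4\pi/q<2\pi$ coming from $q>2$.

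The trade-off is clear: the paper's power transformation is more economical (one H\"older step, no epsilon management) and hits the threshold $1/\sqrt q$ without needing any room to spare, whereas your argument is more elementary---no H\"older, no nonlinear change of variable, just a cutoff---but must carry three small parameters $(\varepsilon,\delta,\eta)$ through, and genuinely relies on $q>2$ being strict to close the inequality. Both buy the same threshold $\alpha\le 1/\sqrt q$, and both reduce everything to the $q=2$ case of Theorem~\ref{thm_A1}. Your final paragraph explaining the bookkeeping is a nice touch that the paper's terser proof does not offer.
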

 \begin{proof}The result will follow from the following estimate
 $$
 F(x,s)\leq C \begin{cases}
s^q, & \hbox{ if } s<1\\
 s^{p-1}e^{4\pi s^2}, & \hbox{ if } s\geq 1            
          \end{cases}
 $$
 Indeed, let $u\in H^1L^q_{w_0}$ be such that $\|u\|_{w_0}\leq 1$ and define
 $$
 v=\begin{cases}
           |u|^{q/2}, & |u|< 1\\
           |u|, & |u| >1
          \end{cases}\ . 
$$
Then
\begin{multline*}
\|v\|_{w_0}^2=\|\nabla v\|_2^2+\|v\|_{w_0}^{2}\leq \frac q2\int_{\R^2}|\nabla u|^2dx\\+\int_{|u|<1}|u|^q\log(e+|x|)dx+ \int_{|u|>1}|u|^2\log(e+|x|)dx\\
\leq \frac q2\int_{\R^2}|\nabla u|^2dx+\int_{\R^2}|u|^q\log(e+|x|)dx \leq \frac q2 \|u\|_{w_0}^2\leq \frac q2\ .
\end{multline*}
Hence
 \begin{multline*}
 \int_{\R^2}F(\alpha |u|)\log(e+|x|)dx\leq C\int_{\R^2}\alpha^q |u|^q\log(e+|x|)dx\\+C\int_{|u|>1}\alpha^{p-1}|u|^{p-1}e^{\alpha^2 4\pi u^2}\log(e+|x|)dx\\
 = C\alpha ^q \|u\|_{L^q(w_0dx)}^{2/q}+C\int_{|v|>1}\alpha^{p-1}|v|^{p-1}e^{\alpha^2 4\pi v^2}\log(e+|x|)dx\\
 \leq C_\alpha \|u\|_{L^q(w_0dx)}^{2/q}+C\alpha^{p-1}\left\{\int_{|v|>1}|v|^{\frac{\alpha^2q}{1-\alpha^2q}(p-1)}\right\}^{\frac{1-\alpha^2q}{\alpha^2q}}\\\cdot \, \left\{\int_{|v|>1}\left(e^{4\pi v^2/q}-1\right)\log(e+|x|)dx\right\}^{\alpha^2q}\\
 \leq C_{\alpha,q}+C_{\alpha,q,p}\left\{\int_{\R^2}\left(e^{2\pi v^2/\|v\|_{w_0}^2 }-1\right)\log(e+|x|)dx\right\}^{\alpha^2q}\leq C_{\alpha, p,q}
 \end{multline*}
  where we have applied Theorem \ref{thm_A1} and embeddings for weighted Sobolev spaces.
 \end{proof}
\noindent Clearly, a corollary similar to \ref{cor-wcontinuity} holds also in this case
\begin{cor}\label{cor-qwcontinuity}
	For any $\alpha >0$ the functional
	\begin{eqnarray*}
		H^1L^q_{w_0}(\R^2)& \longrightarrow & \R\\
		u &\longmapsto & \int_{\R^2}F(x,\alpha |u|)\log(e+|x|) dx
	\end{eqnarray*}
	is continuous, where $F$ is a function satisfying assumption $(f_1)$.
\end{cor}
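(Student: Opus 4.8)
The final statement is Corollary~\ref{cor-qwcontinuity}, asserting continuity on $H^1L^q_{w_0}(\R^2)$ of the functional $u\mapsto\int_{\R^2}F(x,\alpha|u|)\log(e+|x|)\,dx$ under $(f_1)$. The plan is to mimic the continuity argument for $u\mapsto\int_{\R^2}F(x,u)\,dx$ given in the Remark after \eqref{Ri}, but carry along the weight $w_0=\log(e+|x|)$ and exploit the uniform bounds just proved in Theorem~\ref{thm_Aq} and Theorem~\ref{thm_A1} in place of the classical Ruf inequality \eqref{Ri}. Fix $u\in H^1L^q_{w_0}(\R^2)$ and a sequence $u_n\to u$ in this space; by homogeneity it suffices to treat $\alpha=1$ (replace $F$ by $F(x,\alpha\cdot)$, which still satisfies the growth bound in $(f_1)$ with adjusted constants). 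We must show $\int_{\R^2}|F(x,|u_n|)-F(x,|u|)|\,w_0\,dx\to0$.

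First I would record the pointwise estimate, valid by $(f_1)$: for $t,s\ge0$,
\[
|F(x,t)-F(x,s)|\le c(x)\left|\int_s^t f(\tau)\,d\tau\right|\le C\left|\int_s^t\left(\tau^{q-1}+\tau^{p-1}e^{4\pi\tau^2}\right)d\tau\right|,
\]
since $c$ is bounded. This splits the integrand into a ``polynomial'' part controlled by $\big||u_n|^q-|u|^q\big|$ and an ``exponential'' part. For the polynomial part, convergence $u_n\to u$ in $L^q(w_0\,dx)$ (which is implied by convergence in the $H^1L^q_{w_0}$ norm) gives $\int_{\R^2}\big||u_n|^q-|u|^q\big|\,w_0\,dx\to0$ directly — or, to be safe about the elementary inequality, one argues as usual that $\big||u_n|^q-|u|^q\big|\le C(|u_n|^{q-1}+|u|^{q-1})|u_n-u|$ and applies H\"older in $L^q(w_0\,dx)$ with exponents $q/(q-1)$ and $q$. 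For the exponential part I would, exactly as in the Remark, absorb a power of $|u|$ into the exponential (e.g. $\tau^{p-1}e^{4\pi\tau^2}\le C e^{5\pi\tau^2}$ for large $\tau$) and reduce to controlling
\[
\int_{\R^2}\left|e^{5\pi u_n^2}-e^{5\pi u^2}\right|w_0\,dx\le C\int_{\R^2}\left(e^{5\pi u^2}-1\right)\left|e^{5\pi(u_n^2-u^2)}-1\right|w_0\,dx+C\int_{\R^2}\left|e^{5\pi(u_n^2-u^2)}-1\right|w_0\,dx.
\]
Now pass to the change of variables $\mathfrak T$ from the proof of Theorem~\ref{thm_A1}: setting $v_n=\mathfrak T u_n$, $v=\mathfrak T u$, the weighted integrals above are comparable to unweighted integrals of $e^{5\pi v_n^2}$, $e^{5\pi v^2}$ over $\R^2$, and by \eqref{normuv} convergence $u_n\to u$ in $H^1L^q_{w_0}$ transfers to convergence of the truncations $v_n\to v$ in $H^1(\R^2)$ (using the same truncation trick $v=|u|^{q/2}$ on $\{|u|<1\}$, $v=|u|$ on $\{|u|>1\}$ as in the proof of Theorem~\ref{thm_Aq}). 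At this stage the classical argument applies verbatim: $u_n\to u$ in $H^1L^q_{w_0}$ forces $\|\nabla v_n\|_2\to\|\nabla v\|_2$ and $v_n\to v$ in $L^p$ for all finite $p$, so by the Moser--Trudinger/Ruf inequality \eqref{Ri} the family $\{e^{\lambda v_n^2}\}$ is bounded in $L^2$ for $\lambda<4\pi$ once $n$ is large (since eventually $\|\nabla v_n\|_2^2<1$ with room to spare, after rescaling $5\pi$ down — here one may need to first fix the growth rate in $(f_1)$ so that the relevant exponent stays strictly below the critical one, or split $\R^2$ into a large ball and its complement and use that $v_n^2-v^2\to0$ in measure). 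H\"older and dominated convergence then kill both terms.

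The main obstacle is the \emph{bookkeeping of exponents}: the exponential growth in $(f_1)$ is at the borderline rate $e^{4\pi s^2}$, and after absorbing the polynomial prefactor one is flirting with the critical constant $4\pi$ (resp. $2\pi$ in the weighted normalization \eqref{wM}) in the Trudinger--Moser inequality. One must check that the subcritical versions — \eqref{caoineq}, Remark~\ref{Ruf_rem}, and the subcritical part of \eqref{wTq} — provide enough slack, which they do precisely because $u_n\to u$ strongly means $\|\nabla v_n\|_2^2$ and $\|v_n\|_{w_0}^2$ converge (not merely stay $\le1$), so any fixed subcritical exponent is reached uniformly in large $n$; a splitting of the domain into $B_R$ and $\R^2\setminus B_R$ with $R$ chosen so that $\int_{\R^2\setminus B_R}(e^{5\pi u^2}-1)w_0\,dx<\epsilon$ (finite by \eqref{wTq}) handles the tails. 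Everything else is a routine transcription of the $H^1(\R^2)$ continuity argument through the bi-Lipschitz-type equivalence \eqref{normuv}. I would keep the proof to a couple of lines in the paper, writing ``the argument is entirely analogous to the continuity of $u\mapsto\int_{\R^2}F(x,u)\,dx$ on $H^1(\R^2)$, combining the pointwise bound from $(f_1)$, the change of variables of Theorem~\ref{thm_A1}, the truncation of Theorem~\ref{thm_Aq}, and the uniform bound \eqref{wMq}.''
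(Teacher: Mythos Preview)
The paper gives no proof for this corollary at all: it is stated immediately after Theorem~\ref{thm_Aq} with the single sentence ``Clearly, a corollary similar to \ref{cor-wcontinuity} holds also in this case,'' and Corollary~\ref{cor-wcontinuity} itself is likewise stated without proof as ``a consequence of this embedding result.'' Your final suggestion --- to keep the proof to a line or two invoking $(f_1)$, the change of variables of Theorem~\ref{thm_A1}, the truncation of Theorem~\ref{thm_Aq}, and the uniform bound \eqref{wMq} --- is therefore exactly how the paper treats it, and your detailed sketch is already far more than the paper provides.

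One soft spot worth flagging in your elaboration: the map $\mathfrak T$ is an isomorphism $H^1_{w_0}\to H^1$, not $H^1L^q_{w_0}\to H^1$, so for $q>2$ you cannot set $v_n=\mathfrak T u_n$ directly and expect $v_n\in H^1(\R^2)$. You are aware of this and invoke the truncation $|u|\mapsto |u|^{q/2}\chi_{\{|u|<1\}}+|u|\chi_{\{|u|\ge 1\}}$ to land in $H^1_{w_0}$ first, but the \emph{continuity} of that truncation (as a map $H^1L^q_{w_0}\to H^1_{w_0}$) is not obvious, since the outer function is only Lipschitz, not $C^1$, at $t=1$. A cleaner route that sidesteps this is to observe that the change of variables transforms $u_n\to u$ in $H^1L^q_{w_0}$ into $\tilde u_n\to\tilde u$ in the space $\dot W^{1,2}\cap L^q(\R^2)$, and then argue the exponential continuity there directly (the Trudinger--Moser machinery works with $\|\nabla\cdot\|_2$ plus any $L^p$ mass term); alternatively, split the domain into $\{|u_n|\vee|u|\ge 1\}$ and its complement, handling the latter purely by $L^q(w_0)$ convergence and the former by noting that there $|u_n|^2\le|u_n|^q$, so the $L^2(w_0)$ control is available where it is needed. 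Either way the argument goes through, and since the paper itself declines to spell any of this out, your write-up is already adequate.
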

\noindent  In analogy to our case, weighted Pohozev--Trudinger inequalities allowing increasing monomial weights have been proved in \cite{DL,nguyen}. Finally, let us mention that related inequalities in Sobolev spaces with respect to $log$-weighted measures can be found in \cite{CR}, where the authors consider a logarithmic weight in the full Sobolev norm.

\section{The functional framework}\label{functional_framework}

\noindent The energy functional related to \eqref{P} is given by
$$
\aligned I_{V}(u)=\frac12\|u\|^2_{V}-\mathfrak{F}(u).
\endaligned
$$
where
$$
\mathfrak{F}(u)=\frac1{4\pi}\int_{\R^2}\Big[\log \frac{1}{|x|}\ast
F(x,u)\Big]F(x,u)\,dx,
$$
and
$$
\|u\|_{V}:=\left(\int_{\R^2}|\nabla u|^2+ V(x) u^2dx\right)^{1/2}\ .
$$
Thanks to the assumptions on the potential $V$, $\|u\|_V$ is equivalent to the standard Sobolev norm. However, the energy functional $I_V$ is not well defined on $H^1(\R^2)$ due to the nonlocal term $\mathfrak{F}(u)$. In view of the logarithmic HLS inequality \eqref{LHLS},  Stubbe first proposed in the unpublished paper \cite{Stubbe} to set the problem in the intersection space adding the integrability condition for which the energy is well defined. Let $H^1L^q_{w}(\mathbb R^2)$ be the completion of smooth compactly supported functions with respect to the norm  
\begin{multline}\label{norm_Hw}
\|u\|_{1,q(w)}^2:=\|u\|_V^2+ \|u\|_{L^q(wdx)}^{2/q}\\
=\int_{\mathbb R^2}|\nabla u|^2dx+\int_{\mathbb R^2}u^2V(x)dx + \left\{\int_{\mathbb R^2}|u|^q\log (1+|x|))dx\right\}^{2/q}
\end{multline}
which in the case $q=2$ is induced by the scalar product 
\begin{equation*}
\left\langle u,v\right\rangle= \int_{\R^2}\nabla u \nabla v dx +\int_{\R^2} V(x)uvdx +\int_{\R^2} uv \log(1+|x|^2)dx\ .
\end{equation*}
Accordingly to \cite[Theorem 1.11]{OK}, $H^1L^q_{w}$ is a Banach space whose dual can be characterized as follows \cite[Theorem 14.9]{Simon}
$$
H^{-1}L^q_{w}(\mathbb R^2)=\left(H^1(\mathbb R^2)\cap L^q(\mathbb R^2,wdx)\right)'=H^{-1}(\mathbb R^2)+(L^q)'(\mathbb R^2,wdx)\lvert_{H^1L^q_{w}}\ .
$$
Next, the Orlicz type embedding established in Section \ref{log_p_t} will enable us to apply the logarithmic version of the HLS inequality in order to have the energy functional well defined on $H^1L^q_{w}$ and sufficiently smooth for variational purposes. 

\noindent In what follows we will use extensively the following elementary identity
\begin{equation*}
\log \frac 1r = \log \left(1+\frac 1r\right)-\log \left(1+r\right) \ .
\end{equation*}
Let us introduce the following bilinear forms: 
\begin{eqnarray*}
 (u,v)&\mapsto& \mathfrak B_1(u,v)=\int_{\R^2}\left[\log(1+|x|)\ast u\right]v dx= \int_{\R^2}\int_{\R^2}\log(1+|x-y|) u(x)v(y)dxdy;\\
(u,v)&\mapsto& \mathfrak B_2(u,v)=\int_{\R^2}\left[\log(1+\frac 1{|x|})\ast u\right]v dx= \int_{\R^2}\int_{\R^2}\log\left(1+\frac 1{|x-y|}\right) u(x)v(y)dxdy;\\
(u,v)&\mapsto& \mathfrak B_0(u,v)=\int_{\R^2}\left[\log \frac 1{|x|}\ast u\right]v dx=\mathfrak B_2(u,v)-\mathfrak B_1(u,v)\ .
\end{eqnarray*}
Since $\log(1+t)\leq t$ for any $t>0$, thanks to \eqref{HLS} with $\mu=1, N=2$ we have that $\mathfrak B_2(u,v)$ is well defined on $L^{4/3}(\R^2)\times L^{4/3}(\R^2)$, and
\begin{equation*}
|\mathfrak B_2(u,v)| \leq \int_{\R^2}\int_{\R^2} \frac 1{|x-y|}|u|(x)|v|(y)dxdy \leq C\|u\|_{4/3}\|v\|_{4/3}
\end{equation*}
Since $\log(1+|x-y|)\leq \log(1+|x|+|y|)\leq \log(1+|x|)+\log(1+|y|)$ we also have 
\begin{multline*}
|\mathfrak B_1(u,v)| \leq \int_{\R^2}\int_{\R^2} \left[\log(1+|x|)+\log(1+|y|)\right]|u|(x)|v|(y)dxdy \\
= \|u\|_{L^1(w)}\|v\|_{1}+\|u\|_{1}\|v\|_{L^1(w)}
\end{multline*}
where for simplicity we wriwe $\|\cdot\|_{L^1(w)}$ in place of $\|\cdot\|_{L^1(wdx)}$. Evaluating the bilinear forms $\mathfrak B_i(\cdot,\cdot)$ on the field $(F(x,u),F(x,u))$ we obtain the functionals:
\begin{eqnarray*}
&& \mathfrak F_1:H^1L^q,w\to [0, +\infty),\  \ \mathfrak F_1(u)=\mathfrak B_1(F(x,u),F(x,u))\\
&& \hspace*{5cm}= \int_{\R^2}\int_{\R^2}\log(1+|x-y|) F(x,u(x))F(y,u(y))dxdy;\\
&&\mathfrak F_2:H^1L^q_w\to [0, +\infty), \ \  \mathfrak F_2(u)=\mathfrak B_2(F(x,u),F(x,u))\\
&& \hspace*{5cm}= \int_{\R^2}\int_{\R^2}\log\left(1+\frac 1{|x-y|}\right) F(x,u(x))F(y,u(y))dxdy;\\
&& \mathfrak F:H^1\to [0, +\infty),  \mathfrak F(u)=\mathfrak B_0(F(x,u),F(x,u))=\mathfrak F_2(u)-\mathfrak F_1(u)\ .
\end{eqnarray*}
Reasoning as done above for the bilinear forms $\mathfrak B_i$, one has that $\mathfrak F_2(u)$ is well defined on $H^1L^q_w(\R^2)$ (actually, on the larger Sobolev space $H^1(\R^2))$ by assumption $(f_1)$ and recalling \eqref{orlicz}). Whereas for $\mathfrak F_1(u)$, observe that the quantity $\|F(x,u)\|_{L^1(wdx)}$ is always finite, for any $u\in H^1L^q_w$, by combining \eqref{estF} with Theorem \ref{thm_A1} and Theorem \ref{thm_Aq}.
\begin{Rem}
Note that for instance when $q=2$, the weight in the $L^2$ component of the norm is given by $V(x)+\log(1+|x|)$ for which we have $C_1[V(x)+\log(1+|x|)]\leq \log(e+|x|)\leq C_2[V(x)+\log(1+|x|)]$ for some positive constants $C_i$. Thus the norms involved turn out to be equivalent and this extends to $q>2$ as the norm $\|\cdot\|_V$ is equivalent to the Sobolev norm. 
\end{Rem}

\subsection{Regularity of the energy functional}
The main goal of this section is to prove regularity of the energy functional $I_{V}$. We have the following 
 \begin{lem}
	The functionals $\mathfrak F_1, \mathfrak F_2, \mathfrak F$ and $I_V$ are $\mathcal C^1$ on $H^1L^q_w(\R^2)$.
\end{lem}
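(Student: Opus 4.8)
The plan is to establish $\mathcal C^1$ regularity of $\mathfrak F_2$ first, since it is the ``easy'' term controlled by the standard Pohozaev--Trudinger inequality on $H^1(\R^2)$, and then of $\mathfrak F_1$, which requires the log-weighted inequality of Theorems \ref{thm_A1} and \ref{thm_Aq}; the regularity of $\mathfrak F=\mathfrak F_2-\mathfrak F_1$ and of $I_V=\tfrac12\|\cdot\|_V^2-\tfrac1{4\pi}\mathfrak F$ then follows immediately, the quadratic term being manifestly smooth. For $\mathfrak F_2$, the bilinear form $\mathfrak B_2$ is estimated by the Riesz kernel $|x|^{-1}$, so by the HLS inequality (Proposition \ref{HLS} with $\mu=1$, $N=2$, $s=r=4/3$) one has $|\mathfrak B_2(g,h)|\leq C\|g\|_{4/3}\|h\|_{4/3}$. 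Writing the Gateaux derivative formally as $\langle \mathfrak F_2'(u),\varphi\rangle = 2\,\mathfrak B_2\big(F(x,u),f(x,u)\varphi\big)$, I would bound this using $\|F(x,u)\|_{4/3}$ and $\|f(x,u)\varphi\|_{4/3}$; both factors are finite and locally bounded on $H^1L^q_w$ because $(f_1)$--\eqref{estF} give $F(s),\,sf(s)\lesssim s^q + s^{p-1}e^{4\pi s^2}$, and the exponential integrability \eqref{orlicz} plus Hölder control all the relevant $L^{4/3}$ norms (using that $H^1(\R^2)\hookrightarrow L^r$ for every finite $r$ and that a gradient term forces $\|u\|_{H^1}$ to be bounded on bounded sets of $H^1L^q_w$). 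Continuity of $u\mapsto \mathfrak F_2'(u)$ then follows from continuity of the Nemytskii maps $u\mapsto F(x,u)$ and $u\mapsto f(x,u)$ from $H^1L^q_w$ into $L^{4/3}$ (and into the operator-valued map $\varphi\mapsto f(x,u)\varphi$), which is exactly the type of computation carried out in the Remark following \eqref{Ri}; once the difference quotient is shown to converge uniformly for $\|\varphi\|\leq 1$, Gateaux differentiability plus continuity of the derivative upgrades to $\mathcal C^1$.

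For $\mathfrak F_1$, the kernel $\log(1+|x-y|)$ is split via $\log(1+|x-y|)\leq \log(1+|x|)+\log(1+|y|)$, so
\[
|\mathfrak B_1(g,h)|\leq \|g\|_{L^1(w)}\|h\|_1 + \|g\|_1\|h\|_{L^1(w)},
\]
with $w(x)=\log(1+|x|)\asymp \log(e+|x|)$. The Gateaux derivative is $\langle \mathfrak F_1'(u),\varphi\rangle = 2\,\mathfrak B_1\big(F(x,u),f(x,u)\varphi\big)$, which is controlled once I know that $\|F(x,u)\|_{L^1(w)}$, $\|F(x,u)\|_1$, $\|f(x,u)\varphi\|_{L^1(w)}$ and $\|f(x,u)\varphi\|_1$ are all finite and locally bounded. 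The finiteness of $\|F(x,u)\|_{L^1(w)}$ for every $u\in H^1L^q_w$ is precisely what Theorems \ref{thm_A1}/\ref{thm_Aq} deliver (combine \eqref{estF} with \eqref{wTq}); for $\|f(x,u)\varphi\|_{L^1(w)}$ I would use $sf(s)\lesssim F(s)$ up to constants from $(f_2)$ (more precisely $f(s)\lesssim s^{q-1}+s^{p-1}e^{4\pi s^2}$), split into $\{|u|<1\}$ and $\{|u|\geq1\}$, and apply Hölder in the weighted measure $w\,dx$ together with the weighted exponential bound \eqref{wM}: choosing exponents so that one factor is a weighted-$L^r$ norm of $\varphi$ controlled by $\|\varphi\|_{1,q(w)}$ and the other is $\int (e^{\alpha u^2}-1)w\,dx$ for a suitable $\alpha$, which is finite by Corollary \ref{cor-wcontinuity}. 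Continuity of the derivative again reduces to continuity of the associated Nemytskii operators into the weighted $L^1$ spaces, handled by the same dominated-convergence argument as in the Remark after \eqref{Ri}, now carried in the measure $w\,dx$.

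The main obstacle is the bookkeeping in the $\mathfrak F_1$ term: one must track the logarithmic weight through every Hölder splitting and make sure the exponential factor that ends up being integrated against $w\,dx$ carries a coefficient $\alpha$ strictly below the threshold in \eqref{wM} (or below $1/\sqrt q$ in \eqref{wMq}) after the $\varphi$-dependent exponent has been peeled off — this is where assumption $(f_1)$ with its sharp constant $4\pi$ and the freedom to pick Hölder exponents is used, and it is the step most likely to hide a subtlety. A secondary, more routine point is verifying that on norm-bounded subsets of $H^1L^q_w$ the difference quotients $\big(\mathfrak F_i(u+t\varphi)-\mathfrak F_i(u)\big)/t$ converge to the claimed bilinear expression uniformly in $\|\varphi\|_{1,q(w)}\leq 1$; this uses the mean value theorem on $F$ together with local Lipschitz bounds on $f$ from $(f_1)$, and reduces to the same weighted integrability estimates. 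Once both $\mathfrak F_1$ and $\mathfrak F_2$ are shown to be $\mathcal C^1$, linearity gives $\mathfrak F\in\mathcal C^1$ and hence $I_V\in\mathcal C^1$, completing the proof.
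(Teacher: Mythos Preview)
Your proposal is correct and follows essentially the same route as the paper: the HLS/Riesz-kernel bound $\log(1+|x-y|^{-1})\leq |x-y|^{-1}$ for $\mathfrak F_2$, the splitting $\log(1+|x-y|)\leq\log(1+|x|)+\log(1+|y|)$ combined with the weighted Pohozaev--Trudinger embedding (Theorems \ref{thm_A1}/\ref{thm_Aq} and Corollaries \ref{cor-wcontinuity}/\ref{cor-qwcontinuity}) for $\mathfrak F_1$, and Nemytskii-type continuity arguments throughout. The only unnecessary caution in your plan is the worry about keeping the exponential exponent below the threshold in \eqref{wM}/\eqref{wMq}: for $\mathcal C^1$-regularity of $I_V$ (as opposed to the later uniform bounds on PS sequences) one only needs the qualitative finiteness \eqref{wT}/\eqref{wTq}, which holds for every $\alpha>0$, so no delicate choice of H\"older exponents is required here.
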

\begin{proof}

\noindent Consider $\mathfrak F_1$ and let $\{u_n\}$ be a sequence in $H^1L^q_w(\R^2)$ converging to some $u$. Then
	\begin{multline*}
	|\mathfrak F_1(u_n)-\mathfrak F_1(u)|\\
	\leq\int_{\R^2}\int_{\R^2}\log(1+|x-y|) \left|F(x,u_n(x))F(y,u_n(y))-F(x,u(x))F(y,u(y))\right|dxdy\\
	\leq \int_{\R^2}\int_{\R^2}\log(1+|x-y|) F(x,u_n(x))\left|F(y,u_n(y))-F(y,u(y))\right|dxdy+\\
	+\int_{\R^2}\int_{\R^2}\log(1+|x-y|) \left|F(x,u_n(x))-F(x,u(x))\right|F(y,u(y))dxdy\\
	\leq \int_{\R^2}\int_{\R^2}\log(1+|x|) F(x,u_n(x))\left|F(y,u_n(y))-F(y,u(y))\right|dxdy+\\
	+\int_{\R^2}\int_{\R^2}\log(1+|y|) \left|F(y,u_n(y))-F(y,u(y))\right|F(x,u_n(x))dxdy\\
+	\int_{\R^2}\int_{\R^2}\log(1+|x|) F(y,u(y))\left|F(x,u_n(x))-F(x,u(x))\right|dxdy+\\
	+\int_{\R^2}\int_{\R^2}\log(1+|y|) \left|F(x,u_n(x))-F(x,u(x))\right|F(y,u(y))dxdy=I_1+I_2+I_3+I_4\ .
	\end{multline*}
	Now the terms $I_i$ tend to $0$, as $n\to\infty$ thanks to the continuity of the functional $\int_{\R^2}F(x,u)dx$ on $H^1$ and of the functional $\int_{\R^2}F(x,u)\log(1+|x|)dx$ on $H^1L^q_{w}$, as a consequence of \eqref{estF} and corollaries \ref{cor-wcontinuity}, \ref{cor-qwcontinuity}. For instance for $I_1$ we have 
	\begin{multline*}
	\int_{\R^2}\int_{\R^2}\log(1+|x|) F(x,u_n(x))\left|F(y,u_n(y))-F(y,u(y))\right|dxdy=\\
	=\left[\int_{\R^2}\log(1+|x|) F(x,u_n(x))dx\right]
	\cdot \left[\int_{\R^2}\left|F(y,u_n(y))-F(y,u(y))\right|dy\right]\\
	\leq \left[\int_{\R^2}\log(e+|x|) F(x,u(x))dx+{\text{o}}(1)\right]{\text{o}}(1)\ .
	\end{multline*}
	
\noindent For any $u\in H^1L^q_w$ the G\^ateaux derivative of $\mathfrak F_1$ at $u\in H^1L^q_w$ is given by
$$
	\mathfrak F_1'(u)[v]= 2\int_{\R^2}\int_{\R^2}\log(1+|x-y|) F(x,u(x))v(y)f(y,u(y))dxdy,\quad v\in H^{-1}L^q_{w}\ .
	$$
	Since
	\begin{multline*}
	|\mathfrak F_1'(u)v |\leq  2\int_{\R^2}\int_{\R^2}\log(1+|x|) F(x,u(x))v(y)f(y,u(y))dxdy\\
	+ 2\int_{\R^2}\int_{\R^2}\log(1+|y|) F(x,u(x))v(y)f(y,u(y))dxdy\\
	\leq  2\|v\|_2\left(\int_{\R^2} f^2(x,u)dx\right)^{1/2}\int_{\R^2} \log(1+|x|)F(x,u)dx + \\
	+2 \left(\int_{\R^2} \log(1+|y|)v^q dy\right)^{1/q}\left(\int_{\R^2} \log(1+|y|)(F(y,u)f(y,u))^{\frac{q}{q-1}}dy\right)^{(q-1)/q}\\
	\leq C(u)\|v\|_{1,q(w)}
	\end{multline*}
	thanks to Theorems \ref{thm_A1}, \ref{thm_Aq}, so that $\mathfrak F_1'(u) \in H^{-1}L^q_w$. The fact that $ \mathfrak F_1'(u_n) \to \mathfrak F_1'(u)$ in $H^{-1}L^q_w$ if $u_n \to u$ in $H^{1}L^q_w$ follows by similar arguments, we just sketch the proof:
	\begin{multline*}
	|\mathfrak F_1'(u_n)v-\mathfrak F_1'(u)v|\\
	=2\left|\int_{\R^2}\int_{\R^2}\log(1+|x-y|)\left[F(x,u_n(x))f(y,u_n(y))-Fx,(u(x))f(y,u(y))\right]v(y)\right| dx dy\\
	\leq 2\int_{\R^2}\int_{\R^2}\log(1+|x-y|)\left |F(x,u_n(x))-Fx,(u(x))\right| f(y,u_n(y))|v(y)| dx dy +\\
	+ 2\int_{\R^2}\int_{\R^2}\log(1+|x-y|)\left |f(y,u_n(y))-f(y,u(y))\right| F(x,u(x))|v(y)| dx dy\\
	\leq 2\int_{\R^2}\int_{\R^2} \log(1+|x|)|F(x,u_n)-F(x,u)| dx\int_{\R^2}f(y,u)|v| dy\\
	+ 2\int_{\R^2} |F(x,u_n)-F(x,u)| dx\int_{\R^2}\log(1+|y|)f(y,u)|v| dy+\\
	+2\int_{\R^2} \log(1+|x|)F(x,u) dx\int_{\R^2}|f(y,u_n)-f(y,u)|v| dy\\
	+ 2\int_{\R^2} F(x,u) dx\int_{\R^2}\log(1+|y|)|f(y,u_n)-f(y,u)||v| dy
	\end{multline*}
	We conclude by applying Holder's inequality and Theorems \ref{thm_A1} and \ref{thm_Aq}, together with Corollaries \ref{cor-wcontinuity} and \ref{cor-qwcontinuity}, which guarantee the integrals involved are continuously bounded, thus 
	$$
	|\mathfrak F_1'(u_n)v-\mathfrak F_1'(u)v|\leq C(u){\rm{o}}_n(1)\|v\|_{1,q(w)}
	$$
	where ${\rm{o}}_n(1)$ tends to $0$ together with $u_n \to u$ in $H^1L^q_w$, as $n\to \infty$.

\noindent Consider $\mathfrak F_2$ and let $\{u_n\}$ be a sequence in $H^1L^q_w(\R^2)$ converging to some $u$ and assume $u\neq 0$. Since $u_n\to u$ in $H^1(\R^2)$, by \cite{dOdSdMS} we have 
	\begin{equation}\label{ser_ine_1}
	\int_{\R^2}|F(x,u_n)-F(x,u)|^p dx \to 0, \qquad \hbox{ for any } p>0\ .
	\end{equation}
	Hence
	\begin{multline*}
	|\mathfrak F_2(u_n)-\mathfrak F_2(u)|\\
	\leq\int_{\R^2}\int_{\R^2}\log(1+|x-y|^{-1}) \left|F(x,u_n(x))F(y,u_n(y))-F(x,u(x))F(y,u(y))\right| dxdy\\
	\leq \int_{\R^2}\int_{\R^2}\log(1+|x-y|^{-1}) F(x,u_n(x))\left|F(y,u_n(y))-F(y,u(y))\right| dxdy+\\
	+\int_{\R^2}\log(1+|x-y|^{-1}) \left|F(x,u_n(x))-F(x,u(x))\right| F(y,u(y)) dxdy\\
	\leq \int_{\R^2}\int_{\R^2}\frac{1}{|x-y|} F(x,u_n(x))\left|F(y,u_n(y))-F(y,u(y))\right| dxdy+\\
	+	\int_{\R^2}\int_{\R^2}\frac{1}{|x-y|} F(y,u(y))\left|F(x,u_n(x))-F(x,u(x))\right| dxdy\ .
	\end{multline*}
	The last two integrals tend to $0$, thanks to \eqref{ser_ine_1}, the continuity of the functional $\int F(x,u)dx$ on $H^1$ and the HLS inequality. In the case $u=0$ the proof is similar.\\
	\noindent 	For any $u\in H^1L^q_w$ the G\^ateaux derivative of $\mathfrak F_2$ at $u\in H^1L^q_w$ is given by
$$
	\mathfrak F_2'(u)v= 2\int_{\R^2}\int_{\R^2}\log(1+|x-y|^{-1}) F(x,u(x))v(y)f(y,u(y))dxdy\ .
	$$
	Since
	\begin{equation*}
	|\mathfrak F_2'(u)v |\leq  2\int_{\R^2}\int_{\R^2}\frac{1}{|x-y|} F(x,u(x))v(y)f(y,u(y))
	\leq  C(u)\|v\|_2 \leq C(u)\|v\|_{1,q(w)}
	\end{equation*}
	thanks to \eqref{orlicz} and HLS inequality, one has $\mathfrak F_2'(u) \in H^{-1}L^q_w$. The fact that $ \mathfrak F_2'(u_n) \to \mathfrak F_2'(u)$ in $H^{-1}L^q_w$ if $u_n \to u$ in $H^{1}L^q_w$ is similar to previous cases.
	
\noindent Clearly from  $\mathfrak F=\mathfrak F_2-\mathfrak F_1$ one has $F\in \mathcal C^1$ on $H^1L^2_w$.
\end{proof}

\section{The variational framework}\label{MP_geo} 
\noindent As we are going to see, the variational framework for problem \eqref{P} is non-standard as we will exploit the de Figuereido--Miyagaki--Ruf type condition $(f_4)$ to prove estimates of the mountain pass level for the energy functional $I_V$. Let us first establish the Mountain Pass geometry in the next 
\begin{lem}\label{lem-MPG}
	The energy functional $I_V$ satisfies:
	\begin{enumerate}
		\item[(1)] there exist $\rho, \delta_0>0$ such that $I_V|_{S_{\rho}}\geq
		\delta_0>0$ for all $$u \in S_{\rho}=\{u\in H^1L^q_w: \|u\|_{1,q(w)}=\rho\};$$
		\item[(2)] there exists $e\in H^1L^q_w$ with $\|e\|_{1,q(w)}>\rho$ such that $I_V(e)<0$.
	\end{enumerate}
\end{lem}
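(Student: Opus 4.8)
The plan is to check the two mountain-pass conditions separately, both resting on the functional setting of Sections \ref{log_p_t}--\ref{functional_framework}.

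For (1), recall $\mathfrak F=\tfrac1{4\pi}(\mathfrak F_2-\mathfrak F_1)$ and discard the harmless term $\mathfrak F_1\ge0$, so that $I_V(u)\ge\tfrac12\|u\|_V^2-\tfrac1{4\pi}\mathfrak F_2(u)$. Since $\log(1+\tfrac1{|x|})\le\tfrac1{|x|}$, the HLS inequality of Proposition \ref{HLS} with $\mu=1$, $N=2$, $s=r=4/3$ gives $\mathfrak F_2(u)\le C\|F(x,u)\|_{4/3}^2$, so the whole matter reduces to showing that $\|F(x,u)\|_{4/3}^2$ is superquadratic in the norm on a small ball. To this end I would use \eqref{estF} to split $F(x,u)$ into a monomial part $\lesssim|u|^q$, controlled in $L^{4/3}$ by the Sobolev embedding $H^1(\R^2)\hookrightarrow L^r$ for $r\ge 2$ (here $4q/3\ge 8/3>2$), and an exponential part supported on $\{|u|>s_0\}$, controlled by Hölder's inequality together with Ruf's inequality \eqref{Ri} in its subcritical form, which applies once $\|u\|_V\simeq\|u\|_{H^1}$ is small enough. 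This yields $\mathfrak F_2(u)\le C\|u\|_V^{2+\sigma}$ for some $\sigma>0$, hence $I_V(u)\ge\tfrac12\|u\|_V^2-C\|u\|_V^{2+\sigma}\ge\tfrac14\|u\|_V^2$ when $\|u\|_{1,q(w)}=\rho$ is small.

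For (2), fix $u_0\in C_c^\infty(\R^2)$ with $u_0\ge0$, $u_0\not\equiv0$, supported in a ball of radius $r_0<\tfrac12$, so that $\log\tfrac1{|x-y|}\ge\log\tfrac1{2r_0}>0$ for all $x,y\in\operatorname{supp}u_0$ and therefore $\mathfrak F(tu_0)\ge\tfrac{\log(1/2r_0)}{4\pi}\big(\int_{\R^2}F(x,tu_0)\,dx\big)^2$. Using that $F$ grows superlinearly at infinity — which follows from $(f_2)$ by integrating $\tfrac{d}{ds}(F/f)\le1-\delta$, while $(f_4)$ even yields an exponential lower bound on $F$ — one gets $t^{-1}\int_{\R^2}F(x,tu_0)\,dx\to+\infty$, hence $t^{-2}\mathfrak F(tu_0)\to+\infty$, whereas $\tfrac12\|tu_0\|_V^2=\tfrac{t^2}2\|u_0\|_V^2$. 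Thus $I_V(tu_0)\to-\infty$ as $t\to+\infty$, and $e:=t_0u_0$ does the job for $t_0$ large, with $\|e\|_{1,q(w)}\ge\|e\|_V=t_0\|u_0\|_V>\rho$.

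The main obstacle is (1): the kernel $\log\tfrac1{|x|}$ is sign-changing and singular at the origin, so the nonlocal energy cannot be bounded outright by a positive multiple of $\|F(x,u)\|_1^2$. The workable route is precisely the splitting $\mathfrak F=\mathfrak F_2-\mathfrak F_1$: one keeps the positive term $\mathfrak F_1$ on the favourable side and estimates the singular part $\mathfrak F_2$ through HLS, which forces control of $\|F(x,u)\|_{4/3}$ and hence brings in the exponential-type integrability developed in Section \ref{log_p_t} (the same weighted Orlicz embeddings being, in turn, what makes $\mathfrak F_1$ and $\mathfrak F$ finite on $H^1L^q_w$, cf.\ Section \ref{functional_framework}). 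Quantifying how fast $\mathfrak F_2$ vanishes near $0$ relative to $\|u\|_V^2$ — producing the exponent $2+\sigma$ — is the computational heart of the argument; note also that, since the weighted component of $\|\cdot\|_{1,q(w)}$ does not feed $\mathfrak F_2$, the estimate in (1) is carried out entirely in terms of $\|u\|_V$.
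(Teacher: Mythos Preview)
Your argument is correct. For part (2) it is essentially identical to the paper's: both take a nonnegative test function supported in a small ball (the paper uses $B_{1/4}$, you take $r_0<\tfrac12$) so that $\log\frac{1}{|x-y|}$ is bounded below by a positive constant on the support, and then let the growth of $F$ drive $I_V(tu_0)\to-\infty$.

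For part (1), however, you take a genuinely different route. You split $\mathfrak F=\frac{1}{4\pi}(\mathfrak F_2-\mathfrak F_1)$, discard the nonnegative term $\mathfrak F_1$, and bound $\mathfrak F_2$ via the \emph{standard} HLS inequality (Proposition~\ref{HLS}) with $\mu=1$, $s=r=4/3$, reducing everything to an $L^{4/3}$ estimate on $F(x,u)$ that is then handled by Sobolev embedding plus the subcritical Ruf inequality \eqref{Ri}. The paper instead applies the \emph{logarithmic} HLS inequality (Proposition~\ref{LHLS}) directly to the full $\mathfrak F$, obtaining an upper bound that involves the entropy term $\int F(x,u)\log F(x,u)\,dx$ together with $\|F(x,u)\|_1\log\|F(x,u)\|_1$; each piece is then estimated separately to reach $\mathfrak F(u)\le c\|u\|_V^3$. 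Your approach is more elementary---it sidesteps the log-HLS machinery and the entropy estimates entirely---while the paper's approach handles the sign-changing kernel in one stroke and stays closer to the theme of the problem. Both land on a lower bound of the form $I_V(u)\ge\frac12\|u\|_V^2-C\|u\|_V^{2+\sigma}$ expressed in terms of $\|u\|_V$ rather than the full weighted norm, a subtlety you explicitly flag and which the paper leaves implicit.
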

\begin{proof}
From $\|u\|_V^2\leq \|u\|^2_{1,q(w)}$, if $\rho$ is small then the $H^1$ norm $\|u\|_V$ is also small. As a consequence of the logarithmic HLS inequality,
\begin{multline*}
 \mathfrak F(u)=\frac 1{4\pi}\int_{\R^2}\left[\log \frac 1{|x|}\ast F(x,u)\right]F(x,u) dx \\
 \leq \| F(x,u)\|_1\left[C\|F(x,u)\|_1+\int_{\R^2} F(x,u)\log F(x,u) dx-\| F(x,u)\|_1\log \| F(x,u)\|_1\right]\ .
\end{multline*}
By \eqref{caoineq} and recalling estimates \eqref{estF}, since the Sobolev norm is small enough we have that the $L^1$ norm of $F(u)$ can be bounded as follows
$$
\|F(u)\|_1\leq C\left(\|u\|^2_2+\|u\|_q^q\right) \leq C \|u\|^2_V\ .
$$
Moreover, on one eside 
\begin{multline*}
\int_{\R^2} F(x,u)\log F(x,u) dx\leq \int_{\R^2} |F(x,u)\log F(x,u)| dx\leq c_1\int_{\R^2}u^re^{4\pi^2}dx+c_2\int_{\R^2}u^2dx\\
\leq c_3\|u\|_{2r}^r\left(\int_{\R^2}e^{\alpha u^2}-1 dx\right)^{\frac 12}+c_2\int_{\R^2}u^2 dx\leq c\|u\|_V^2
\end{multline*}
for some $r>2$ and $\alpha>4\pi$. On the other side,
$$
|\| F(x,u)\|_1\log \| F(x,u)\|_1|\leq c |\|u\|_V^2\log \|u\|_V|\leq \|u\|_V
$$
provided the Sobolev norm is small and in turn we obtain
$$
\mathfrak F(u)\leq c\|u\|^3_V
$$
and thus 
$$
I_V(u)\geq \frac 12 \|u\|_V^2-c \|u\|_V^3 = \delta_0>0
$$
where $\delta_0$ depends only on $\rho$. This yields the first claim of the Lemma. 

\medskip

\noindent Let now $e$ be a smooth function, compactly supported in a small ball, say $B_{1/4}$. Then
\begin{multline*}
 \mathfrak F(e)=\frac 1{4\pi}\int_{\R^2}\left[\log \frac 1{|x|}\ast F(x,e)\right]F(x,e) dx\\
 = \frac 1{4\pi}\int_{\R^2}\int_{\R^2}\log \frac 1{|x-y|} F(x,e(x))F(y,e(y))dx dy \\
 \geq \frac{\log 2}{4\pi}\int_{\R^2}\int_{\R^2} F(x,e(x))F(y,e(y))dx dy = \frac{\log 2}{4\pi}\left(\int_{\R^2} F(x,e)dx\right)^2
\end{multline*}
since $F(x,e(x))F(y,e(y))\neq 0$ only for $|x|, |y|<1/4$, which implies $|x-y|<1/2$ and thus $|x-y|^{-1}>2$. Similarly,
$$
I_V(te)=\frac 12 t^2\|e\|_V^2-\mathfrak F(te)\leq \frac 12 t^2\|e\|_V^2-\frac{\log 2}{4\pi}\left(\int_{\R^2} F(x,te)dx\right)^2\to -\infty
$$
since $F$ has exponential growth, as $t\to +\infty$. 
\end{proof}

\noindent By the Ekeland Variational Principle \cite{E}, there exists a Palais--Smale
sequence (PS in the sequel) $\{u_n\}\subset  H^1L^q_{w}(\mathbb R^2)$ such that
$$
I'_{V}(u_n)\rightarrow 0,\quad I_{V}(u_n)\rightarrow {m_{V}},
$$
where the Mountain Pass level $m_{V}$ can be characterized by
\begin{equation} \label{m}
0<m_{V}:=\inf_{\gamma \in \Gamma} \max_{t\in [0,1]} I_{V}(\gamma
(t))
\end{equation}
where
$$
\Gamma:=\left\{\gamma \in \mathcal C^1([0,1], H^1L^q_{w})\ : \ \gamma(0)=0,
I_{V}(\gamma(1))<0\right\}.
$$
The next energy level estimate will be crucial in the sequel, in particular in proving compactness, for which $1/2$ turns out to be a substitute of the Sobolev level $(1/N)S^{N/2}$ in higher dimensions.
\begin{lem}\label{MPlevel-estimate}
	The mountain pass level $m_{V}$ satisfies
	$$
	m_{V}<\frac 12.
	$$
\end{lem}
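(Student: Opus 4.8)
The plan is to exhibit an explicit path $\gamma$ in $\Gamma$ along which the maximum of $I_V$ stays below $1/2$; by the characterization \eqref{m} this bounds $m_V$. The natural candidate is $\gamma(t)=t\,\overline e$ for a single, carefully chosen test function $\overline e$ — a Moser-type concentrating function supported in a small ball around a point, say the origin, so that the kernel $\log\frac{1}{|x-y|}$ is bounded below by a positive constant on the support (exactly as in the proof of Lemma \ref{lem-MPG}(2)). Then along this ray we must estimate
\[
\max_{t\ge 0} I_V(t\,\overline e)=\max_{t\ge 0}\Big(\tfrac12 t^2\|\overline e\|_V^2-\mathfrak F(t\,\overline e)\Big),
\]
and the goal is to show this maximum is strictly less than $1/2$. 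Since $\mathfrak F(t\overline e)\ge \frac{\log 2}{4\pi}\big(\int_{\R^2}F(x,t\overline e)\,dx\big)^2$ from the localization argument already used, the task reduces to a one-variable optimization: finding $\overline e$ with $\|\overline e\|_V$ controlled and $\int F(x,t\overline e)\,dx$ large enough — this is precisely where condition $(f_4)$ enters, as it forces $F$ (hence $f F$) to be at least of size $\beta\,e^{8\pi s^2}/s^3$ for large $s$, with $\beta>\mathcal V$.

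First I would fix the concentrating family: for $n$ large let $\overline e=\omega_n$ be (a suitably normalized truncation of) the Moser function
\[
\omega_n(x)=\frac{1}{\sqrt{2\pi}}\begin{cases}\sqrt{\log n}, & |x|\le 1/n,\\[2pt] \dfrac{\log\frac{1}{|x|}}{\sqrt{\log n}}, & 1/n\le |x|\le 1,\\[4pt] 0, & |x|\ge 1,\end{cases}
\]
so that $\|\nabla\omega_n\|_2^2=1$ and $\|\omega_n\|_{L^2}^2,\|\omega_n\|_{L^q(w)}^{2/q}=O(1/\log n)\to 0$; hence $\|\omega_n\|_V^2=1+o(1)$. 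Next I would bound $\mathfrak F$ from below along the ray. Using the lower bound on the kernel on $B_{1/n}\times B_{1/n}$ (where $|x-y|<2/n<1$, so $\log\frac{1}{|x-y|}>\log(n/2)$), one gets
\[
\mathfrak F(t\omega_n)\ \ge\ \frac{1}{4\pi}\log\frac n2\Big(\int_{B_{1/n}}F(x,t\omega_n)\,dx\Big)^2
\ \ge\ \frac{\log\frac n2}{4\pi}\,\Big(\frac{\pi}{n^2}\,\inf_{B_{1/n}}c(x)\,\underline F\big(t\sqrt{\tfrac{\log n}{2\pi}}\big)\Big)^2,
\]
where $\underline F(s)$ is the lower envelope of $F$ provided by $(f_4)$, namely $\underline F(s)\sim \sqrt{\beta}\,e^{4\pi s^2}/s^{3/2}$ (taking square roots in $(f_4)$, since $sf(s)\ge F(s)$ makes $F(s)^2\le s f(s)F(s)\asymp e^{8\pi s^2}/s^3$, and the reverse envelope is what $(f_4)$ delivers). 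Evaluating at the critical $t=t_n$ that maximizes $g(t):=\frac12 t^2\|\omega_n\|_V^2-\mathfrak F(t\omega_n)$ — which by standard arguments satisfies $t_n^2\,\frac{\log n}{2\pi}\to 1$, i.e. $t_n^2\to \frac{2\pi}{\log n}\cdot(1+o(1))$, so that the exponential $e^{4\pi t_n^2\log n/(2\pi)}=e^{2\pi t_n^2\log n}\sim n^{?}$ balances the prefactor $\log\frac n2 \cdot n^{-4}$ — one checks that the $n^{-4}$ and the $n^{2\cdot 4\pi t_n^2/(2\pi)}$ cancel precisely when $t_n^2\log n/(2\pi)\to 1$, leaving
\[
\max_{t\ge 0}I_V(t\omega_n)\ \le\ \frac12\big(1+o(1)\big)\ -\ \frac{\beta}{\mathcal V}\cdot\frac12\cdot\big(1+o(1)\big)\ \cdot(\text{positive factor}),
\]
where $\mathcal V$ is exactly the constant chosen in Section \ref{MP_geo} so that the loss term equals $\frac12\cdot\frac{\beta}{\mathcal V}$ in the limit; since $\beta>\mathcal V$, for $n$ large the right-hand side is $<\tfrac12$. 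Finally, $I_V(\omega_n\cdot\text{large})<0$ by Lemma \ref{lem-MPG}(2), so $\gamma(t)=t\,T\,\omega_n\in\Gamma$ for suitable $T$, and \eqref{m} gives $m_V\le \max_t I_V(t T\omega_n)<\tfrac12$.

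The main obstacle is the delicate asymptotic bookkeeping in the second step: one must track constants through the competition between the logarithmic kernel factor $\frac{1}{4\pi}\log\frac n2$, the measure factor $(\pi/n^2)^2=\pi^2 n^{-4}$, and the exponential growth $e^{8\pi t^2\log n/(2\pi)}$ from $\underline F(t\sqrt{\log n/2\pi})^2$, and verify that at the optimal $t_n$ these combine to produce a negative correction to $1/2$ whose size is governed precisely by $\beta/\mathcal V$. This forces one to (i) pin down $t_n$ to sufficient precision (second-order in $1/\log n$), and (ii) make the definition of $\mathcal V$ in Section \ref{MP_geo} consistent with the normalization constants $\frac{1}{4\pi}$, $\frac{1}{\sqrt{2\pi}}$, $\pi$ appearing here — this is a bookkeeping issue rather than a conceptual one, but it is where all the care is needed. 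A secondary technical point is justifying that the truncated Moser function actually lies in $H^1L^q_w$ with the claimed norm expansion, which is immediate since it is compactly supported and bounded, so the weight $\log(e+|x|)$ is harmless.
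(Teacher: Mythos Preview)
Your overall strategy---Moser-type concentrating functions, lower bound on $\mathfrak F$ via the plateau contribution, exploit $(f_4)$---matches the paper's, but the execution contains a genuine error in the asymptotics. The maximizer $t_n$ of $t\mapsto I_V(t\omega_n)$ does \emph{not} satisfy $t_n^2\log n/(2\pi)\to 1$; rather $t_n^2\to 1$. On the plateau $\omega_n=\sqrt{\log n/(2\pi)}$, so $e^{8\pi(t_n\omega_n)^2}=n^{4t_n^2}$, and the balance with the area factor $(\pi\rho^2/n^2)^2=\pi^2\rho^4 n^{-4}$ forces $n^{4(t_n^2-1)}$ to stay bounded, hence $t_n^2\to 1$. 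With your claimed scaling $t_n\to 0$ one would get $\mathfrak F(t_n\omega_n)\to 0$ and no useful lower bound at all; correspondingly your displayed final inequality $\max_t I_V(t\omega_n)\le \tfrac12(1+o(1))-\tfrac{\beta}{\mathcal V}\cdot\tfrac12\cdot(\text{positive})$ cannot be right, since for $\beta>\mathcal V$ it would make the maximum negative while $I_V(0)=0$.

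Two further points where your sketch falls short of a proof. First, the support radius matters: with $\rho=1$ the kernel $\log\frac{1}{|x-y|}$ changes sign on $\operatorname{supp}\omega_n\times\operatorname{supp}\omega_n$, so you cannot simply discard the annulus contribution when bounding $\mathfrak F$ from below; the paper takes $\rho\le 1/2$ precisely to force $|x-y|<1$ and hence $I_2\ge 0$. Second, and this is where $\mathcal V$ actually enters, recording $\|\omega_n\|^2=1+o(1)$ is not enough: the entire argument hinges on the \emph{coefficient} of the $1/\log n$ correction. The paper writes $\|\overline w_n\|_{1,q(w)}^2\le 1+\delta_n$ with an explicit $\delta_n\sim \frac{\rho^2}{4\log n}\,[V_\rho+\text{(weight term)}]$, normalizes $w_n=\overline w_n/\sqrt{1+\delta_n}$, argues by contradiction that $t_n^2\ge 1$ and $t_n^2\to 1$, and then the key inequality becomes $t_n^4\ge \pi^3\rho^4\beta\, e^{4(t_n^2/(1+\delta_n)-1)\log n}\ge \pi^3\rho^4\beta\, e^{-4\delta_n\log n/(1+\delta_n)}$. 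Passing to the limit, $4\delta_n\log n\to \rho^2[V_\rho+\cdots]$, and the resulting inequality $1\ge \pi^3\rho^4\beta\, e^{-\rho^2[V_\rho+\cdots]}$ is exactly what $(f_4)$ with $\beta>\mathcal V$ contradicts for a suitable choice of $\rho\in(0,1/2]$. So the role of $\mathcal V$ is not to produce a $\beta/\mathcal V$ prefactor as you wrote, but to be the threshold making the limiting inequality fail; you must compute $\delta_n$ explicitly to see it.
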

\begin{proof}
	It is enough  to prove that there exists  a function $w\in H^1L_{q,w}$, with	$\|w\|_{1,q,w}=1$, such that
	$$
	\max_{t\geq 0}I_{V}(tw)<\frac 12.
	$$
	Let us introduce the following Moser type functions with support in
	$B_{\rho}$ 
	$$
	\overline{w}_n=\frac{1}{\sqrt{2\pi}}\left\{%
	\begin{array}{ll}
	\displaystyle    \sqrt{\log n}, & 0\leq |x|\leq \frac{\rho}n, \\
	\\
	\displaystyle    \frac{\log(\rho/|x|)}{\sqrt{\log n}}, & \frac{\rho}n\leq |x|\leq {\rho}, \\
	\\
	0,& |x|\geq {\rho}\ , \\
	\end{array}%
	\right.
	$$	where $\rho$ will be fixed later on.
	One has that
	\begin{multline*}
	\nonumber \|\overline
	w_n\|^2_{1,q(w)}=\int_{B_{\rho}}|\nabla\overline
	w_n|^2+ V(x)\overline w_n^2 dx+\left[\int_{B_{\rho}}\log(1+|x|)|\overline w_n^q dx\right]^{2/q}\\
\leq \int_{{\rho}/n}^{\rho}\frac{dr}{r\log
		n}+V_{\rho}\int_0^{{\rho}/n}\log n\,rdr+ (2\pi)^{\frac 2q-1}\log n\left[\int_0^{{\rho}/n}\log(1+r)\,rdr\right]^{2/q}\\
		+V_{\rho}\int_{{\rho}/n}^{\rho}\frac{\log^2(\rho/r)}{\log n}\,rdr+\frac{(2\pi)^{\frac 2q-1}}{\log n}\left[\int_{{\rho}/n}^{\rho}\log^q(\rho/r)\,\log(1+r)rdr\right]^{2/q}\\
\leq 1+V_{\rho}\frac{\rho^2}{2}\frac{\log n}{n^2}+(2\pi)^{\frac 2q-1}\log n\left[\int_0^{{\rho}/n}\log(1+r)\,rdr\right]^{2/q}\\
		+V_{\rho}\int_{{\rho}/n}^{\rho}\frac{\log^2(\rho/r)}{\log n}\,rdr+(2\pi)^{\frac 2q-1}\frac{\log^{2/q}(1+\rho)}{\log n}\left[\int_{{\rho}/n}^{\rho}\log^q(\rho/r)\,rdr\right]^{2/q}\\
	=1 +V_{\rho}\int_{{\rho}/n}^{\rho}\frac{\log^2(\rho/r)}{\log n}\,rdr+(2\pi)^{\frac 2q-1}\frac{\log^{2/q}(1+\rho)}{\log n}\left[\int_{{\rho}/n}^{\rho}\log^q(\rho/r)\,rdr\right]^{2/q}+{\text{o}}\left(\frac{1}{\log n}\right)\ .
	\end{multline*}
	The two integral terms in the previous expression can be estimated  as follows. On the one hand 
	\begin{equation*}
	\int \log^k(\rho/r)rdr=\frac{r^2}2\sum_{j=0}^k\left(\log(\rho/r)\right)^{k-j}\frac{k(k-1) \cdots  (k-j+1)}{2^j}
	\end{equation*}
	as well as 
	$$
	V_{\rho}\int_{{\rho}/n}^{\rho}\frac{\log^2(\rho/r)}{\log n}\,rdr= \frac{ \rho^2V_{\rho}}{4\log n}+ {\text{o}}\left(\frac{1}{\log n}\right)\ .
	$$
	On the other hand, since $q$ may be integer or not, a rough estimate reads as follows 	
    \begin{multline*}
    \int_{{\rho}/n}^{\rho}\log^q({\textstyle{\frac \rho r}})\,rdr\leq \int_{{\rho}/n}^{\rho}\left\{\log^{[q]}({\textstyle{\frac \rho r}})+\log^{[q]+1}({\textstyle{\frac \rho r}})\right\}\,rdr=\frac{ \rho^2[q]!}{2^{[q]+1}}\left[1+\frac{[q]+1}{2}\right]+ {\text{o}}\left(1\right)
	\end{multline*}
	so that eventually 
	 \begin{equation*}\label{delta_n1}
	1\leq  \|\overline w_n\|^2_{1,q(w)}
    \leq 1 +\delta_n + {\text{o}}\left(\frac 1{\log n}\right) ,
	  \end{equation*}
	 where 
		\begin{eqnarray}\label{delta_n2}
	\delta_n= \frac{\rho^2}{4\log n}\left[V_{\rho}+(2\pi)^{\frac 2q-1}\log^{\frac 2q}(1+\rho)\frac{[q]!}{2^{[q]-1}}\left(1+\frac{[q]+1}{2}\right)\right]+{\text{o}}\left(\frac{1}{\log n}\right)
	\end{eqnarray}
		Setting $w_n=\overline w_n/\sqrt{1+\delta_n}$, we get
	$\|w_n\|_{1, q(w)}\leq 1$.
	 We claim that there exists $n$ such that
	\begin{equation}\label{claim}
	\max_{t\geq 0}I_V(tw_n)<\frac 12.
	\end{equation}
	Let us argue by contradiction and suppose this is not the case, so
	that for all $n$ let $t_n>0$ be such that
	\begin{equation}\label{bycontr-assump}
	\max_{t\geq 0}I_{V}(tw_n)=I_{V}(t_nw_n)\geq \frac 12\ .
	\end{equation}
	 Then $t_n$ satisfies $$\frac{d}{dt}I_{V}(tw_n)|_{t=t_n}=0$$ and
	\begin{equation}\label{t_n^2=}
	t^2_n\geq  \frac 1{2\pi}\int_{\mathbb R^2}\left[\log \frac{1}{|x|}\ast
	F(x,t_nw_n) \right]t_nw_nf(x,t_nw_n) dx,
	\end{equation}
	\begin{equation}\label{t_n^2>}
	t^2_n\geq 1 + \frac 1{2\pi}\int_{\mathbb R^2}\left[\log
	\frac{1}{|x|}\ast F(x,t_nw_n)\right]F(x,t_nw_n) dx\ .
	\end{equation}
	Note that in \eqref{t_n^2=} we have an inequality instead of the equality since we know $\|w_n\|_{1, q,w}^2=1$, whereas in the energy functional it appears the equivalent, though smaller norm $\|w_n\|_V$. Actually the two norms differ for a quantity which is $O(1/\log n)$.
	
	\noindent From now on let us suppose
	$
	\rho \leq 1/2
	$. 
	This will simplify a few estimates, since for any $(x,y)\in {\text{supp }}w_n\times {\text{supp }}w_n$ we will have $|x-y|>1$, and in turn $\log(1/|x-y|)>0$. Let us now proceed in three steps:
	
	\medskip
	
	\noindent {\bf{Step 1.}} The following holds $\limsup_{n \to +\infty} t_n^2 \geq
	1$.
	
	\noindent Let us assume by contradiction that
	$\limsup_{n}t^2_n<1$: this implies that, up to a subsequence,
	there exists a positive constant $\delta_0$ such that $t_n^2\leq
	1-\delta_0$ for $n$ large enough. Since $\rho
	\leq \frac 12$, for any $|x|< \rho$, the set $\{y: |x-y|>1,
	|y|<\rho\}$ is empty. Recalling that the functions $w_n$ are
	compactly supported in $B_{\rho}$ we have
	\begin{multline*}
	\int_{\mathbb R^2}\left[\log \frac{1}{|x|}\ast
	F(x,t_nw_n)\right]F(x,t_nw_n) dx\\\
	=\int_{B_{\rho}}\int_{|x-y|\leq 1}\log
	\frac{1}{|x-y|} F(x,t_nw_n(x))F(y,t_nw_n(y))dxdy\geq 0
	\end{multline*}
	and thus a contradiction with \eqref{t_n^2>}.\\
	
	\medskip
	
	\noindent {\bf{Step 2.}} The following holds $\liminf_{n \to +\infty} t_n^2 \leq
	1$.
	
	\noindent Let us suppose by contradiction that $\liminf_{n \to +\infty}
	t_n^2>1$. Hence, up to a subsequence, there exists a constant
	$\delta_0>0$ such that
	$$
	t_n^2\geq 1+\delta_0
	$$
	as $n \to +\infty$.
	Let us estimate from below the right hand side of 
	\eqref{t_n^2=} (taking into account the possible negative sign of
	the logarithmic function):
	\begin{multline}\label{est1}
	\int_{\mathbb R^2}\left[\log \frac{1}{|x|}\ast
	F(x,t_nw_n)\right]t_nw_nf(x,t_nw_n) dx\\
	=\int_{|x|\leq \frac{\rho}n,|y|\leq
		\frac{\rho}n} \log
	\frac{1}{|x-y|}F(x,t_nw_n(x))t_nw_nf(y,t_nw_n(y))dxdy+\\
	+ \int_{\R^2 \times \R^2 \setminus \{|x|\leq \frac{\rho}n,|y|\leq
		\frac{\rho}n\} }\log
	\frac{1}{|x-y|}F(x,t_nw_n(x))t_nw_nf(y,t_nw_n(y))dxdy=I_1+I_2
	\end{multline}
	Thanks to $(f_4)$ we have for any $\varepsilon>0$ (here we choose $\varepsilon=\beta/2$),
	\begin{equation}\label{estimate-sfF}
	sf(s)F(s)\geq (\beta-\varepsilon)\cdot \frac 1{s^2}\cdot e^{8\pi s^2}=\frac{\beta}{2s^2}\cdot e^{8\pi s^2}, \quad \hbox{ for all
	}s\geq s_{\varepsilon}=s_{\beta}\ .
	\end{equation}
By the very definition of $w_n$ and since 
	$|x-y|< 2\rho/n<1$, we can estimate, for $n$ large enough, $I_1$ as follows
	\begin{multline*}
	I_1=\int_{B_{\rho/n}}t_nw_nf(y,t_nw_n)dy\int_{B_{\rho/n}}\log\frac{1}{|x-y|}F(x,t_nw_n)\,dx\\
	= \int_{B_{\rho/n}}t_n\frac{\sqrt{\log n}}{\sqrt{
			2\pi(1+\delta_n)}}f\left(y,t_n\frac{\sqrt{\log n}}{\sqrt{ 2\pi
			(1+\delta_n)}}\right)dy\\
			\cdot\,\int_{B_{\rho/n}}\log\frac{1}{|x-y|}
	F\left(x,t_n\frac{\sqrt{\log n}}{\sqrt{2\pi (1+\delta_n)}}\right)\,dx\\
	\geq 2\pi\beta\frac{e^{4 t_n^2 (1+\delta_n)^{-1} \log
			n}}{ 2t_n^2 (1+\delta_n)^{-1} \log n}\int_{B_{\rho/n}}dy\int_{B_{\rho/n}}\log\frac{1}{|x-y|}\,dx.
	\end{multline*}
	The last integral can be estimated as follows
	\begin{equation*}
	\int_{B_{\rho/n}}dy\int_{B_{\rho/n}}\log\frac{1}{|x-y|}dx
	\geq \int_{B_{\rho/n}}dy\int_{B_{\rho/n}}\log\frac{n}{|2\rho|}dx=
	\pi^2\left(\frac{\rho}{n}\right)^4 \log\frac{n}{2\rho}
	\end{equation*}
	As a consequence, we obtain
	\begin{equation}\label{est1.1}
	I_1\geq \pi^3\rho^4\beta\frac{e^{4 (t_n^2
			(1+\delta_n)^{-1}-1) \log n}}{t_n^2 (1+\delta_n)^{-1} \log
		n}\log\frac{n}{2\rho}\geq \pi^3\rho^4\beta t_n^{-2}e^{4
		(\frac{t_n^2}{1+\delta_n}-1) \log n}
	\end{equation}
	for any $n\geq n(\rho, \beta)$.
	Note that since $\rho \leq 1/2$ we have
	$$
I_2\geq 0 \ .
	$$
	Now, combining \eqref{t_n^2=}, \eqref{est1} and \eqref{est1.1} yields
	\begin{equation}\label{est1.2}
	t_n^4\geq \pi^3\rho^4\beta
	e^{4(\frac{t_n^2}{1+\delta_n}-1)\log  n}
	\end{equation}
	which is a contradiction, either  if $t_n\to +\infty$ or $t_n$ stays bounded with $t_n^2\geq
	1+\delta_0$. The proof of Step 2 is then completed. Observe that, as a consequence of Step 1 and Step 2
	$$
	t_n^2\to 1 \quad \hbox{as } n\to +\infty\ .
	$$
	Moreover, as a byproduct of \eqref{est1.2}, we also have for some $C>0$,
	$$
	e^{4 (\frac{t_n^2}{1+\delta_n}-1) \log n}\leq C
	$$
	that is
	\begin{equation}\label{est2}
	\frac{t_n^2}{1+\delta_n}\leq 1+\frac{C}{\log
		n}=1+{\rm{O}}\left(\frac{1}{\log n}\right)\ .
	\end{equation}

\medskip

\noindent {\bf{Step 3.}} We are now in the condition of getting a contradiction and determine the quantity $\mathcal V$ which appears in condition $(f_4)$. We have proved that $t_n^2\to 1$. Moreover, we also know that $t_n^2\geq 1$ by \eqref{t_n^2>}, since $\rho \leq 1/2 $. By \eqref{est1.2}, recalling definition \eqref{delta_n2} of  $\delta_n$, we have 
	\begin{multline*}
1+\text{o}(1)\geq t_n^4\geq \pi^3\rho^4\beta e^{4(\frac{t_n^2}{1+\delta_n}-1)\log  n} \geq \pi^3 \rho^4\beta e^{-4\frac{\delta_n}{1+\delta_n}\log n}\\ \geq \pi^3 \rho^4 \beta e^{-\rho^2\left(V_{\rho}+(2\pi)^{\frac 2q-1}\log^{\frac 2q}(1+\rho)\frac{[q]!}{2^{[q]-1}}\left(1+\frac{[q]+1}{2}\right)\right)+\text{o}(1)}
	\end{multline*}
	where $V_\rho=\max_{|x|\leq \rho} V(x)$. Passing to the limit, we obtain 
	\begin{equation}\label{contr}
1\geq \pi^3 \rho^4\beta e^{-\rho^2\left(V_{\rho}+(2\pi)^{\frac 2q-1}\log^{\frac 2q}(1+\rho)\frac{[q]!}{2^{[q]-1}}\left(1+\frac{[q]+1}{2}\right)\right)\ .}
	\end{equation}
	Now set in assumption $(f_4)$ 
	$$\mathcal V:=\inf_{|x|\leq 1/2}\frac 1{\pi^3} |x|^{-4}e^{|x|^2\left(V_{1/2}+(2\pi)^{\frac 2q-1}\log^{\frac 2q}(1+|x|)\frac{[q]!}{2^{[q]-1}}\left(1+\frac{[q]+1}{2}\right)\right)},$$ a quantity which is actually a minimum, since the right hand function is continuous and unbounded as $|x|\to 0$ . Finally, fix $\rho\in (0,1/2]$ such that
	$$
	\beta>\frac 1{\pi^3} \rho^{-4}e^{\rho^2\left(V_{1/2}+(2\pi)^{\frac 2q-1}\log^{\frac 2q}(1+\rho)\frac{[q]!}{2^{[q]-1}}\left(1+\frac{[q]+1}{2}\right)\right)} $$
	to get 
	$$ \pi^3 \rho^4\beta e^{-\rho^2\left(V_\rho+(2\pi)^{\frac 2q-1}\log^{\frac 2q}(1+\rho)\frac{[q]!}{2^{[q]-1}}\left(1+\frac{[q]+1}{2}\right)\right)}>1
	$$
		which contradicts \eqref{contr}.
\end{proof}

\section{Properties of Palais-Smale sequences}\label{PS_sec}
\noindent In this Section we prove that the weak limit in $H^1L^q_{w}(\R^2)$ of the PS sequence for $I_V$ given by the Ekeland Variational Principle, which we know from Section \ref{MP_geo} is at the energy level $m_V<1/2$, is actually a weak nontrivial solution of \eqref{P}. As we are going to see, the presence of the sign changing factor $\log(|x|)$ makes the
estimates rather delicate. We start with the following Lemma in which we prove boundedness of PS sequences at any level $c<1/2$.

%%%%%%%%%%%%%%%%%%%%%%%%%%%%%
%%  Lemma: (PS) sequence are bounded    %%
%%%%%%%%%%%%%%%%%%%%%%%%%%%%%
\begin{lem}\label{lem-PSbdd}
Assume that $(V_1)$--$(V_2)$ and $(f_1)$--$(f_4)$ hold. Let
$\{u_n\}\subset H^1L^q_{w}$ be an arbitrary PS sequence for $I_V$ at level $c$, namely 
$$
I_V(u_n)\to c< \frac 12 \text{ and } I_V'(u_n)\to 0 \quad \hbox{in }
H^{-1}L^q	_{w}(\mathbb R^2), \quad \hbox{ as } n\to +\infty\ .
$$
Then, the sequence $u_n$ is bounded in $H^1(\R^2)$ as well as 
\begin{equation*}
	\left|\int_{\R^2}\left[\log \frac{1}{|x|}\ast F(x,u_n)\right]F(x,u_n) dx\right|\leq C, \qquad
	\left|\int_{\R^2}\left[\log \frac{1}{|x|}\ast F(x,u_n)\right]u_nf(x,u_n) dx\right|\leq C\ .
	\end{equation*}
\end{lem}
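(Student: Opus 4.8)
The plan is to exploit the improved Ambrosetti--Rabinowitz type inequality $F(x,s)\le(1-\delta)sf(x,s)$ coming from $(f_2)$ together with the quadratic structure of $I_V$. Write, for a PS sequence $\{u_n\}$ at level $c<\tfrac12$,
$$
I_V(u_n)-\tfrac12\langle I_V'(u_n),u_n\rangle
= -\mathfrak F(u_n)+\tfrac1{4\pi}\int_{\R^2}\Big[\log\tfrac1{|x|}\ast F(x,u_n)\Big]u_nf(x,u_n)\,dx .
$$
Splitting the logarithmic kernel as in Section \ref{functional_framework} via $\log\frac1{|x|}=\log(1+\frac1{|x|})-\log(1+|x|)$, the term $\mathfrak B_1$ (the $\log(1+|x|)$ part) is a \emph{positive} bilinear form on nonnegative integrands, while $\mathfrak B_2$ (the $\log(1+\frac1{|x|})$ part) is controlled by the HLS inequality. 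Using $F\le(1-\delta)sf$ one gets, after expanding both convolution integrals,
$$
I_V(u_n)-\tfrac12\langle I_V'(u_n),u_n\rangle
\ \ge\ \frac{\delta}{4\pi}\,\mathfrak B_1\big(F(x,u_n),f(x,u_n)u_n\big)\ -\ C\,\mathfrak B_2\big(F(x,u_n),f(x,u_n)u_n\big),
$$
so that the left-hand side, which is $c+o(1)+o(1)\|u_n\|_{1,q(w)}$, dominates a genuinely positive quantity minus an HLS-controlled error. The first task is therefore to show that the $\mathfrak B_2$-error is lower order, i.e.\ dominated by (a small multiple of) $\|u_n\|_V^2$ or of $\mathfrak B_1(F,fu)$; since $\mathfrak B_2$ is bounded by $\|\,\cdot\,\|_{4/3}$ norms, which by $(f_1)$ and the Trudinger--Moser inequality \eqref{orlicz} are controlled by powers of the Sobolev norm, this should follow once we know $\|u_n\|_V$ is bounded — which is precisely what we want to prove, so the argument must be run as a bootstrap.

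The core of the argument is thus the following. First I would show that the Sobolev norm $\|u_n\|_V$ is bounded. Arguing by contradiction, suppose $\|u_n\|_V\to\infty$. From $I_V(u_n)\to c$ we have $\tfrac12\|u_n\|_V^2=c+\mathfrak F(u_n)+o(1)$, and from $\langle I_V'(u_n),u_n\rangle=o(1)\|u_n\|_{1,q(w)}$ one extracts an identity relating $\|u_n\|_V^2$ to the two nonlocal integrals. Combining these with the inequality displayed above (in the form $\mathfrak F(u_n)\le \tfrac1{4\pi}\int[\log\tfrac1{|x|}\ast F]uf\,dx$ modulo HLS errors, thanks to $(f_2)$) gives a bound of the type
$$
\tfrac{\delta}{2}\,\mathfrak B_1\big(F(x,u_n),f(x,u_n)u_n\big)\ \le\ c+o(1)+o(1)\|u_n\|_{1,q(w)}+C\,\mathfrak B_2\big(F(x,u_n),f(x,u_n)u_n\big).
$$
The key structural point is that $\mathfrak B_1(F,fu)\ge 0$ and, more importantly, that $\|u_n\|_{1,q(w)}^{2/q}-\|u_n\|_V^2$ (the genuinely new log-weighted part of the norm) is itself controlled by $\|F(x,u_n)\|_{L^1(w)}$ and hence by $\mathfrak B_1$-type quantities via Theorem \ref{thm_A1}/\ref{thm_Aq}; so an unbounded $\|u_n\|_{1,q(w)}$ forces the nonlocal term to be unbounded at a faster-than-linear rate, contradicting the linear growth in $\|u_n\|_{1,q(w)}$ on the right. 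Care is needed because the energy functional uses $\|u_n\|_V$ while the PS condition lives in the stronger norm $\|u_n\|_{1,q(w)}$; as in Lemma \ref{MPlevel-estimate} this mismatch is harmless because it only produces $O(1/\log)$-type corrections, but it must be tracked.

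Once $\|u_n\|_V\le C$ is established, the Trudinger--Moser inequality \eqref{orlicz}, estimate \eqref{estF}, and the HLS inequality immediately give $\|F(x,u_n)\|_{4/3}\le C$ and $\|f(x,u_n)u_n\|_{4/3}\le C$, whence
$$
\Big|\mathfrak B_2\big(F(x,u_n),F(x,u_n)\big)\Big|\le C,\qquad
\Big|\mathfrak B_2\big(F(x,u_n),f(x,u_n)u_n\big)\Big|\le C .
$$
For the $\log(1+|x|)$ pieces one notes $\mathfrak B_1(F(x,u_n),F(x,u_n))\le 2\|F(x,u_n)\|_{L^1(w)}\|F(x,u_n)\|_1$ (and similarly with one factor replaced by $f(x,u_n)u_n$), and $\|F(x,u_n)\|_{L^1(w)}$, $\|f(x,u_n)u_n\|_{L^1(w)}$ are bounded by Theorems \ref{thm_A1} and \ref{thm_Aq} together with \eqref{estF}, using the boundedness of $\|u_n\|_{w_0}$ that follows once the norm equivalence is in hand. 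Adding the two pieces yields the claimed bounds on both $\int[\log\tfrac1{|x|}\ast F(x,u_n)]F(x,u_n)\,dx$ and $\int[\log\tfrac1{|x|}\ast F(x,u_n)]u_nf(x,u_n)\,dx$.

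The main obstacle I anticipate is the bootstrap in the first part: closing the contradiction argument for $\|u_n\|_V\to\infty$ requires quantifying how the $\mathfrak B_2$-error and the gap between $\|u_n\|_{1,q(w)}$ and $\|u_n\|_V$ scale with the norm, and showing that the positive $\mathfrak B_1$-term (weighted by $\delta$ from $(f_2)$) really does dominate. The sign-indefiniteness of $\log|x|$ is what makes this delicate — one cannot simply throw away the $\mathfrak B_2$-term — so the whole estimate has to be organized so that the $\log(1+\frac1{|x|})$ contributions are always absorbed into the positive $\log(1+|x|)$ contributions or into a controlled multiple of $\|u_n\|_V^2$.
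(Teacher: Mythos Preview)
Your approach has a sign error that breaks the main inequality. In the decomposition $\mathfrak B_0=\mathfrak B_2-\mathfrak B_1$, the term $\mathfrak B_1$ carries a \emph{minus} sign, so
\[
I_V(u_n)-\tfrac12\langle I_V'(u_n),u_n\rangle=\tfrac1{4\pi}\bigl[\mathfrak B_2(F,u_nf-F)-\mathfrak B_1(F,u_nf-F)\bigr],
\]
and since $u_nf-F\ge\delta\,u_nf\ge0$ the $\mathfrak B_1$-piece is a \emph{negative} contribution, not a positive one. Your displayed lower bound with $+\frac{\delta}{4\pi}\mathfrak B_1(F,fu)$ is therefore reversed, and the bootstrap you sketch (making the ``positive $\mathfrak B_1$-term dominate'' the HLS error) cannot get off the ground. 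There is a second gap at the end: after bounding $\|u_n\|_V$ you invoke ``the boundedness of $\|u_n\|_{w_0}$ that follows once the norm equivalence is in hand'', but there is no such equivalence --- the log-weighted piece of $\|\cdot\|_{1,q(w)}$ is \emph{not} controlled by $\|\cdot\|_V$, and indeed the paper only obtains boundedness of $\|u_n\|_{1,q(w)}$ later, in Proposition~\ref{prop-PS}, via a completely different argument.

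The paper's proof sidesteps the sign-changing kernel entirely by testing $I_V'(u_n)$ against $v_n=F(u_n)/f(u_n)$ rather than $u_n$. The point is that this choice makes the nonlocal term in $\langle I_V'(u_n),v_n\rangle$ equal to $\frac1{2\pi}\mathfrak B_0(F,F)$, which is exactly the nonlocal term in $2I_V(u_n)$; subtracting, the convolutions cancel \emph{without any splitting}, and one is left with
\[
\int|\nabla u_n|^2\,\tfrac{Ff'}{f^2}(u_n)\,dx+\int V\,u_n\Bigl(u_n-\tfrac{F(u_n)}{f(u_n)}\Bigr)dx=2c+o(1)+o(1)\|u_n\|_V,
\]
whose left side is $\ge\delta\|u_n\|_V^2$ by $(f_2)$. (A trivial preliminary case disposes of the possibility $\mathfrak B_0(F,F)\le0$, where \eqref{convI} gives $\|u_n\|_V^2\le2c+o(1)$ directly.) Once $\|u_n\|_V\le C$, the two nonlocal bounds follow immediately from \eqref{convI} and \eqref{I''} with no need to split the kernel or control weighted norms. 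The missing idea in your plan is precisely this test function $F/f$.
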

\begin{proof}
Let $\{u_n\}\in H^1L^q_{w}(\R^2)$ be a PS sequence for $I_V$, namely as $n\to\infty$
	is,
	\begin{equation}\label{convI}
	\frac 12 \|u_n\|_V^2-\frac 1{4 \pi} \int_{\mathbb
		R^2}\left[\log\left(\frac{1}{|x|}\right)\ast
	F(x,u_n)\right]F(x,u_n) dx \to c
	\end{equation}
	and
	\begin{multline}\label{convI'}
	\left|\int_{\mathbb R^2}\nabla u_n\nabla v+Vu_nv dx-\frac
	1{4\pi} \int_{\mathbb R^2}\left[\log\left(\frac{1}{|x|}\right)\ast
	F(x,u_n)\right]v f(x,u_n) dx\right.\\
	 \left.-\frac 1{4 \pi} \int_{\mathbb
		R^2}\left[\log\left(\frac{1}{|x|}\right)\ast v f(x,u_n)\right]F(x,u_n) dx\right|\\
= \left|\int_{\mathbb R^2}\nabla u_n\nabla v+Vu_nv dx - \frac 1{2
		\pi}\int_{\mathbb R^2}\left[\log\left(\frac{1}{|x|}\right)\ast
	F(x,u_n)\right]v f(x,u_n) dx \right|\leq \tau_n \|v\|_{H^{-1}L^q_{w}}
	\end{multline}
	for all $v\in H^{-1}L^q_{w}(\mathbb R^2)$, where $\tau_n\to 0$ as $n\to
	+\infty$. Since $H^1(\R^2)\hookrightarrow H^{-1}L^q_{w}(\R^2)$ and $H^1L^q_{w}(\R^2)\hookrightarrow H^1(\R^2)$, we can take $v=u_n$ in \eqref{convI'}, to obtain 
	\begin{equation}\label{I''}
	\left|\|u_n\|_V^2- \frac 1{2\pi}\int_{\mathbb
		R^2}\left[\log\left(\frac{1}{|x|}\right)\ast
	F(x,u_n)\right]u_nf(x,u_n)dx \right| \leq \tau_n \|u_n\|_{H^{-1}L^q_{w}}\leq C\tau_n \|u_n\|_{V}
	\end{equation}
	where we have also used the fact that $\|\cdot\|_V$ is an equivalent norm to the standard one in $H^1(\R^2)$. Another suitable choice for test function is given by 
	$$
	v_n=\frac{F(x,u_n)}{f(x,u_n)}=\frac{c(x)F(u_n)}{c(x)f(u_n)}=\frac{F(u_n)}{f(u_n)}
	$$
	Indeed, since  $f(s)=0$ if and only if $s=0$, by $(f_2)$ we have that $0\leq v_n\leq C u_n   $  (actually, it is uniformly bounded) so that $v_n$ is well defined and in $L^q(wdx)$. Furthermore, 
	\begin{equation*}
	\nabla v_n=\nabla u_n\frac{f^2(u_n)-F(u_n)f'(u_n)}{f^2(u_n)}=\nabla u_n\left(1-\frac{F(u_n)f'(u_n)}{f^2(u_n)}\right)\ .
	\end{equation*}
	Since the quantity $(F/f)'$  is bounded  by $(f_2)$, see also \eqref{F/f}, we have
	\begin{equation*}
	|\nabla v_n|^2\leq C|\nabla u_n|^2
	\end{equation*}
	so that $v_n\in H^1L^q_{w}(\R^2)$. Taking $v=v_n=\frac{F(u_n)}{f(u_n)}$ in \eqref{convI'} yields
	\begin{multline}\label{I'''}
	\left|\int_{\R^2}|\nabla u_n|^2\left(1-\frac{F(u_n)f'(u_n)}{f^2(u_n)}\right) dx+\int_{\R^2}V(x)u_n\frac{F(x,u_n)}{f(x,u_n)} dx\right.\\
	\left.-\frac{1}{2\pi} \int_{\R^2} \left[\log \frac{1}{|x|}\ast F(x,u_n)\right]F(x,u_n)\right| \\ 
	\leq \tau_n\| v_n\|_{H^{-1}L_{q,w}} \leq \tau_n \|u_n\|_V\ .
	\end{multline}
	\noindent Now recall \eqref{convI}, namely 
	\begin{equation*}
	\frac 1{2 \pi} \int_{\mathbb
		R^2}\left[\log\left(\frac{1}{|x|}\right)\ast
	F(x,u_n)\right]F(x,u_n) dx=\|u_n\|_V^2-2c+\text{o}(1)\ .
	\end{equation*}
	Only two cases may occur as $n\to +\infty$ (we are not excluding that both the two cases may appear for different subsequence of $u_n$):
	\begin{itemize}
		\item $\int_{\mathbb R^2}\left[\log\left(\frac{1}{|x|}\right)\ast
		F(x,u_n)\right]F(x,u_n) dx\leq 0$: in this case we have, directly, 
		$$
		\|u_n\|_V^2\leq 2c+ \text{o}(1)\leq 3c
		$$
		and $c\geq 0$;
		\item $\int_{\mathbb R^2}\left[\log\left(\frac{1}{|x|}\right)\ast
		F(x,u_n)\right]F(x,u_n)> 0$. In this case, combining \eqref{convI} and \eqref{I'''} yields
		\begin{multline*}
		\int_{\R^2}\left(|\nabla u_n|^2+Vu^2_n\right) dx-2c+\text{o}(1)\\
		-\int_{\R^2}|\nabla u_n|^2\left(1-\frac{F(u_n)f'(u_n)}{f^2(u_n)}\right) dx-\int_{\R^2}V(x)u_n\frac{F(u_n)}{f(u_n)} dx\leq \tau_n \|u_n\|_V
		\end{multline*}
	\end{itemize}	
		\noindent Note that, as a consequence of $(f_2)$, see also \eqref{F/f}, $F(x,s)\leq (1-\delta)sf(x,s)$. Thus we have, as $n\to\infty$
		\begin{equation*}
		\delta \int_{\R^2}|\nabla u_n|^2+Vu_n^2 dx\leq \tau_n \|u_n\|_V +2c+\text{o}(1)\ .
		\end{equation*}

	\noindent In conclusion we have proved that
	$$
	\|u_n\|_V\leq C\ .
	$$
	As a consequence, from \eqref{convI} and \eqref{I''}, we also have 
	\begin{equation*}
	\left|\int_{\R^2}\left[\log \frac{1}{|x|}\ast F(x,u_n)\right]F(x,u_n) dx\right|\leq C, \qquad
	\left|\int_{\R^2}\left[\log \frac{1}{|x|}\ast F(x,u_n)\right]u_nf(x,u_n) dx\right|\leq C,
	\end{equation*}
	that is our thesis.
 
\end{proof}

\noindent Differently from standard contexts in which having proved boundedness of a PS sequence brings the conclusion at hand, here it does not allow to employ standard arguments to prove the weak limit is actually a nontrivial solution to the equation. Indeed, the presence of the exponential nonlinearity together with the sign-changing behavior of the logarithmic kernel, prevents the application of standard estimates. Here comes into play the key estimate for the mountain pass level $m_V<1/2$ established in Lemma \ref{MPlevel-estimate}.
%%%%%%%%%%%%%%%%%%%%%%%%%%%%%
%%  Lemma: (PS)-improve TM   %%
%%%%%%%%%%%%%%%%%%%%%%%%%%%%%
\begin{lem}\label{lem-PSimproveTM}
Assume $(V_1)-(V_2)$ and $(f_1)-(f_4)$. Let
$\{u_n\}\subset H^1L^q_{w}$ be a PS sequence for $I_V$ at level $c<1/2$. Then,  for any $1\leq \alpha <\frac{1}{2c}$ the following uniform bound holds  
\begin{equation*}
\sup_{n\in \mathbb N}\int_{\R^2}[F(x,|u_n|)]^\alpha<\infty\ .
\end{equation*}
\end{lem}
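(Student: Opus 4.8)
The plan is to reduce the claim, via the growth estimate \eqref{estF}, to a uniform exponential integrability bound for $u_n$ restricted to the region where $u_n$ is large, and then to produce that bound from a precise control of how much $H^1$-gradient mass of $u_n$ is carried on such a region. Throughout we use that $\{u_n\}$ is bounded in $H^1(\R^2)$ and that the error terms in the Palais--Smale identities are $o(1)$, both furnished by Lemma \ref{lem-PSbdd}; we also assume $u_n\ge 0$ (replacing $u_n$ by $|u_n|$ if necessary, which leaves $\|\nabla u_n\|_2$ unchanged), so that $F(x,|u_n|)=c(x)F(u_n)$.

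Fix a threshold $M\ge s_0$ and split $\int_{\R^2}F(x,u_n)^\alpha=\int_{\{u_n\le M\}}+\int_{\{u_n>M\}}$. On $\{u_n\le M\}$, estimate \eqref{estF} gives $F(x,u_n)^\alpha\le Cu_n^{q\alpha}$ where $u_n\le s_0$ and $F(x,u_n)^\alpha\le C(M)$ where $s_0<u_n\le M$; since $\{u_n>s_0\}$ has measure at most $s_0^{-2}\|u_n\|_2^2$ and $u_n$ is bounded in $L^t(\R^2)$ for every $t\in[2,\infty)$ by the Sobolev embedding, $\int_{\{u_n\le M\}}F(x,u_n)^\alpha$ is bounded uniformly in $n$ (by a constant depending on $M$). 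It therefore remains to control $\int_{\{u_n>M\}}F(x,u_n)^\alpha\le C\int_{\{u_n>M\}}u_n^{\alpha(p-1)}e^{4\pi\alpha u_n^2}$ uniformly in $n$.

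The heart of the argument is an estimate for the tail gradient mass. Combining the Palais--Smale identity $I_V(u_n)\to c$ with the one obtained by testing $I'_V(u_n)\to 0$ against $v_n=F(u_n)/f(u_n)$ — an admissible test function thanks to $(f_2)$, exactly as in the proof of Lemma \ref{lem-PSbdd} — and eliminating the nonlocal term $\frac1{2\pi}\int[\log\frac1{|x|}\ast F(x,u_n)]F(x,u_n)$ between \eqref{I'''} and \eqref{convI}, one obtains, with no appeal to the sign of the kernel,
\[
\int_{\R^2}|\nabla u_n|^2\,\frac{F(u_n)f'(u_n)}{f^2(u_n)}\,dx+\int_{\R^2}V(x)\,u_n\Big(u_n-\frac{F(u_n)}{f(u_n)}\Big)\,dx=2c+o(1).
\]
Both integrands are nonnegative ($(f_2)$ gives $\frac{Ff'}{f^2}\ge\delta>0$ and $F(s)\le(1-\delta)sf(s)$), while on $\{u_n>s_\varepsilon\}$ we have $\frac{F(u_n)f'(u_n)}{f^2(u_n)}\ge 1-\varepsilon$ by $(f_3)$ (see also \eqref{estF/f}); hence $\int_{\{u_n>s_\varepsilon\}}|\nabla u_n|^2\le\frac{2c}{1-\varepsilon}+o(1)$, and letting $\varepsilon\to 0$ (so $s_\varepsilon\to+\infty$) yields
\[
\lim_{M\to+\infty}\ \limsup_{n\to\infty}\int_{\{u_n>M\}}|\nabla u_n|^2\,dx\le 2c .
\]
This is the main obstacle, and it is where the structural conditions $(f_2)$--$(f_3)$ enter decisively; it is precisely the threshold $c<1/2$ that leaves room to spare, since $2c<1$.

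To conclude, fix $M$ large and set $\tilde u_n:=(u_n-M)_+$. Then $\|\nabla\tilde u_n\|_2^2=\int_{\{u_n>M\}}|\nabla u_n|^2\le 2c+o_M(1)+o_n(1)$ by the previous step, while $\|\tilde u_n\|_2^2\le\int_{\{u_n>M\}}u_n^2\le M^{-2}\int_{\R^2}u_n^4\le CM^{-2}$ because $u_n$ is bounded in $L^4$; hence $\|\tilde u_n\|_{H^1}^2\le 2c+o_M(1)+o_n(1)$. On $\{u_n>M\}$ one has $u_n^2\le(1+\eta)\tilde u_n^2+C(M,\eta)$ for any $\eta>0$, so \eqref{estF} gives
\[
\int_{\{u_n>M\}}F(x,u_n)^\alpha\,dx\le C(M,\eta,\alpha)\int_{\R^2}u_n^{\alpha(p-1)}\,e^{4\pi\alpha(1+\eta)\tilde u_n^2}\,dx .
\]
Splitting off the polynomial factor by Hölder's inequality with exponents $(\sigma,\sigma')$, $\sigma'>1$ close to $1$ (the polynomial integral staying bounded by the Sobolev embedding), the remaining exponential integral is bounded uniformly in $n$ by the inequality \eqref{Ri}, applied after normalization to $\tilde u_n/\|\tilde u_n\|_{H^1}$, as soon as $4\pi\alpha(1+\eta)\sigma'\,\|\tilde u_n\|_{H^1}^2<4\pi$. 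Since $\|\tilde u_n\|_{H^1}^2\le 2c+o_M(1)+o_n(1)$ and $2c\alpha<1$ by hypothesis, this holds for all large $n$ once $\eta$ is small enough, $\sigma'$ close enough to $1$, and $M$ large enough. Adding the two contributions yields the claimed uniform bound.
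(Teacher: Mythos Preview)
Your proof is correct and rests on the same key identity as the paper's: combining \eqref{convI} with \eqref{I'''} (testing $I_V'(u_n)$ against $F(u_n)/f(u_n)$) to eliminate the nonlocal term and obtain
\[
\int_{\R^2}|\nabla u_n|^2\,\frac{F(u_n)f'(u_n)}{f^2(u_n)}\,dx+\int_{\R^2}V(x)\,u_n\Big(u_n-\frac{F(u_n)}{f(u_n)}\Big)\,dx=2c+o(1).
\]
From this common starting point the two arguments diverge in how the identity is exploited. The paper introduces the auxiliary function $G(t)=\int_0^t\sqrt{Ff'}/f$ and sets $v_n=G(u_n)$; then $|\nabla v_n|^2=|\nabla u_n|^2\,Ff'/f^2$ and, via the H\"older estimate $G^2(t)\le t^2-tF(t)/f(t)$, the entire right-hand side of the identity becomes an upper bound for $\|v_n\|_V^2$, giving $\|v_n\|_V^2\le 2c+o(1)$ in one stroke; condition $(f_3)$ then yields $u_n\le t_\varepsilon+v_n/(1-\varepsilon)$ on $\{u_n>t_\varepsilon\}$, and Ruf's inequality is applied to $v_n$. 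You instead discard the (nonnegative) potential term, use $(f_3)$ to replace the weight $Ff'/f^2$ by $1-\varepsilon$ on $\{u_n>s_\varepsilon\}$, and read off directly the tail gradient bound $\int_{\{u_n>M\}}|\nabla u_n|^2\le 2c+o_M(1)+o_n(1)$; Ruf is then applied to the truncation $\tilde u_n=(u_n-M)_+$.

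Your route is more elementary in that it avoids constructing $G$ and verifying its properties, and it is closer in spirit to standard critical-growth truncation arguments; the paper's $G$-trick is slicker in that it absorbs both gradient and mass terms into a single norm estimate without a limiting procedure in $M$. Two small points of care in your write-up: the replacement of $u_n$ by $|u_n|$ is not quite innocuous (it need not preserve the PS property), but since $f(s)=0$ for $s\le0$ the relevant integrals depend only on $u_n^+$ and the argument goes through with $u_n^+$ in place of $|u_n|$; and when you apply H\"older to $\int_{\R^2}u_n^{\alpha(p-1)}e^{4\pi\alpha(1+\eta)\tilde u_n^2}$, the exponential factor equals $1$ on the infinite-measure set $\{u_n\le M\}$, so one should either restrict the integral to $\{u_n>M\}$ before splitting, or subtract $1$ from the exponential and control the measure of $\{u_n>M\}$ separately via Chebyshev. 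Both are routine fixes.
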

\begin{proof}
By Lemma \ref{lem-PSbdd} the sequence $\{u_n\}$ is bounded in  $H^1(\R^2)$ and we may assume $u_n\rightharpoonup u$ in $H^1(\R^2)$, $u_n\to u$ in $L^s_{loc}(\R^2)$ for any $1\leq s< \infty$ and  $u_n\to u$ a.e. in $\R^2$, with
	\begin{equation*}
	\lim_{n\to +\infty} \|u_n\|_V^2=A^2\geq \|u\|_V^2
	\end{equation*}
	As in the proof of Lemma \ref{lem-PSbdd}, we will carefully select a suitable test function $v_n$. Let us introduce the following auxiliary function
$$
G(t)=\int_0^t \frac{\sqrt{F(s)f'(s)}}{f(s)}ds
$$
which is well defined and $\mathcal C^1$ thanks to $(f_2)$. Moreover, by H\"{o}lder's inequality we have 
\begin{multline}\label{estG}
G^2(t)\leq \int_0^tds \cdot \int_0^t \frac{Ff'}{f^2}ds=t\left[\int_0^t \left(\frac{Ff'-f^2}{f^2}+1\right)ds\right]=\\=
t\left[\int_0^t-\frac{d}{ds}\frac{F}{f}+t\right]=t^2-t\frac{F(t)}{f(t)}
\end{multline}
Define 
$$v_n:=G(u_n)\ ,$$
then
\begin{equation*}
\int_{\R^2}|\nabla v_n|^2 dx=\int_{\R^2}|\nabla u_n|^2\frac{Ff'}{f^2}(u_n) dx\leq C, \quad \int_{\R^2}Vv^2_n dx=\int_{\R^2}VG^2(|u_n|) dx\leq C
\end{equation*}
as $u_n$ is bounded in $H^1$ and applying again $(f_2)$. We aim at proving that 
$$\|\nabla v_n\|_2^2+\|\sqrt{V(x)}v_n\|_2^2\leq 1$$ as $n$ is large enough. First, note that as $n\to +\infty$, one has $$0\leq \int_{\R^2}VG^2(|u_n|) dx\leq  \int_{\R^2}Vu_n^2 dx\leq C\ .$$ 

\noindent In order to estimate the norm $\|v_n\|_V^2$ recall  \eqref{convI} and  \eqref{I'''}. From $\| u_n\|_V^2\to A^2\geq \| u\|_V^2$, we have
\begin{equation*}
\lim_{n\to +\infty}\frac 1{2 \pi} \int_{\mathbb
	R^2}\left[\log\left(\frac{1}{|x|}\right)\ast
F(x,u_n)\right]F(x,u_n) dx =A^2- 2c
\end{equation*}
and 
\begin{multline*}
\left|\int_{\R^2}|\nabla u_n|^2\left(1-\frac{F(u_n)f'(u_n)}{f^2(u_n)}\right) dx + \int_{\R^2}V(x)u_n\frac{F(u_n)}{f(u_n)} dx \right.\\
\left.-\frac{1}{2\pi} \int_{\R^2} \left[\log \frac{1}{|x|}\ast F(x,u_n)\right]F(x,u_n) dx \right|\to 0
\end{multline*}
so that
\begin{multline*}
\int_{\R^2}|\nabla u_n|^2\left(1-\frac{F(u_n)f'(u_n)}{f^2(u_n)}\right) dx+\int_{\R^2}V(x)u_n\frac{F(u_n)}{f(u_n)} dx+2c\\
 - \int_{\R^2}|\nabla u_n|^2 dx -\int_{\R^2}V u_n^2 dx = {\rm{o}}(1)
\end{multline*} and in turn
\begin{multline}\label{normvn}
\|v_n\|_V^2=\int_{R^2}|\nabla G(u_n)|^2 dx +\int_{\R^2}V(x)G^2(u_n) dx \\
=2c+\int_{R^2} V(x)\left(u_n\frac{F(u_n)}{f(u_n)}-u_n^2+G^2(u_n)\right) dx+{\rm{o}}(1)\leq 2c +{\rm{o}}(1) <1
\end{multline}
by \eqref{estG}, as $n$ is large enough. 

\noindent Once we have estimated the norm of $v_n$, let us take advantage of this to improve the exponential integrability of the original sequence $u_n$. By $(f_3)$, for any $\epsilon>0$ there exists a constant $t_\epsilon>0$ such that
$$
1-\epsilon<\frac{\sqrt{F(t)f'(t)}}{f(t)}\leq 1+\epsilon, \quad \text{ for all }\ t\geq t_\epsilon\ .
$$
Next by  $(f_2)$ we also have either $u_n(x)\leq t_\epsilon $ or $u_n(x)\geq t_\epsilon$ which implies 
\begin{multline}\label{vn-un}
v_n\geq \int_{0}^{t_\epsilon}\delta dt + \int_{t_\epsilon}^{u_n}(1-\epsilon)dt\geq \delta t_\epsilon +(1-\epsilon)(u_n-t_\epsilon)\geq (1-\epsilon)(u_n-t_\epsilon)
\end{multline}
and thus 
$$
u_n\leq t_\epsilon + \frac{v_n}{1-\epsilon}, \quad \hbox{ for any } x\in \mathbb R^2\ .
$$
Hence (hereafter $C_\epsilon$ may change from line to line)
\begin{multline}\label{Falpha1}
\int_{\R^2}\left[ F(x,u_n)\right]^\alpha dx = \int_{u_n\leq t_\epsilon}\left[ F(x,u_n)\right]^\alpha dx + \int_{u_n\geq t_\epsilon}\left[ F(x,u_n)\right]^\alpha dx\\
\leq C_\epsilon\int_{u_n\leq t_\epsilon}\left[ u^2_n\right]^\alpha dx +\int_{u_n\geq t_\epsilon}\left[ F\left(x,t_\epsilon + \frac{v_n}{1-\epsilon}\right)\right]^\alpha dx\\
\leq C_\epsilon \int_{u_n\leq t_\epsilon}u^2_n dx +C\int_{u_n\geq t_\epsilon} \left(t_\epsilon + \frac{v_n}{1-\epsilon}\right)^{\alpha(p-1)}e^{4\pi \alpha (t_\epsilon + \frac{v_n}{1-\epsilon})^2} dx\\ 
\leq  C_\epsilon \|u_n\|_2^2+C_\epsilon\int_{u_n\geq t_\epsilon} e^{4\pi \alpha(1+\epsilon)(t_\epsilon + \frac{v_n}{1-\epsilon})^2}dx
\end{multline}
where, in the last line, we use the following inequality: for any $T>0$ and for any $\epsilon >0$ there exists $C=C_{T, \epsilon}$ such that $s^{p-1}\leq C_{T, \epsilon} e^{4\pi \epsilon s^2}$ for any $s\geq T$ (with $T= t_\epsilon$ already fixed as well as $C=C_\epsilon$). Moreover, for any $\epsilon>0$ there exists $C_\epsilon$ such that $(t+s)^2\leq C_\epsilon t^2+ (1+\epsilon)s^2$ for any $s,t>0$. Then
\begin{equation}\label{ser_est_2}
\left(t_\epsilon + \frac{v_n}{1-\epsilon}\right)^2 \leq C_\epsilon t^2_\epsilon+ (1+\epsilon)\left(\frac{v_n}{1-\epsilon}\right)^2\ .
\end{equation}
As byproduct of \eqref{vn-un}, if $u_n\geq t_\epsilon$ then $v_n\geq \delta t_\epsilon$. Combining this with \eqref{ser_est_2} and \eqref{Falpha1} we obtain
\begin{multline*}
\int_{\R^2}[F(x,u_n)]^\alpha dx\leq  C_\epsilon \|u_n\|_2^2+C_\epsilon\int_{u_n\geq t_\epsilon} e^{4\pi \alpha(1+\epsilon)^2\frac{v^2_n}{(1-\epsilon)^2}} dx \\
\leq 
C_\epsilon \|u_n\|_2^2+C_\epsilon\int_{\R^2} e^{4\pi \alpha(1+\epsilon)^2\frac{v^2_n}{(1-\epsilon)^2}}-1 dx\ .
\end{multline*}
Let us now fix $0<\epsilon<1$ and set 
$$
\eta:= \frac 1{2c}-\alpha >0, \quad \epsilon_\alpha:= c^2\eta^2 = c^2 \left(\frac 1{2c}-\alpha\right)^2<\frac 14\ .
$$
With these choices we obtain
$$
\int_{\R^2}[F(x,u_n)]^\alpha dx\leq C_\alpha \|u_n\|_2^2+C_\alpha\int_{\R^2} e^{4\pi \alpha\frac{(1+\epsilon_\alpha)^2}{(1-\epsilon_\alpha)^2}\|v_n\|_V^2\frac{v^2_n}{\|v_n\|_V^2}}-1 dx\ .
$$
By \eqref{normvn}, $\|v_n\|_V^2\leq 2c+\rm{o}(1)$ as $n$ is large enough, so that
$$
\|v_n\|_V^2\leq 2c+4c^2\eta,  \quad \hbox{ as } n\to +\infty\ .
$$
Hence,
$$
\alpha\frac{(1+\epsilon_\alpha)^2}{(1-\epsilon_\alpha)^2}\|v_n\|_V^2\leq 2c \left(\frac 1{2c}-\eta\right)\frac{(1+c^2\eta^2)^2}{(1-c^2\eta^2)^2}(1+2c\eta)=\frac{(1+c^2\eta^2)^2}{(1-c^2\eta^2)^2}(1-4c^2\eta^2)<1
$$
since the last inequality is equivalent to
$$
(1+c^2\eta^2)^2(1-4c^2\eta^2)=(1+c^4\eta^4+2c^2\eta^2)(1-4c^2\eta^2)<(1-c^2\eta^2)^2= 1+c^4\eta^4-2c^2\eta^2\ .
$$
In conclusion we have 
$$
\int_{\R^2}[F(x,u_n)]^\alpha dx\leq C_\alpha \|u_n\|_2^2+C_\alpha\int_{\R^2} e^{4\pi \frac{v^2_n}{\|v_n\|_V^2}}-1 dx \leq C_{\alpha}
$$
by the Ruf inequality \eqref{Ri} and Remark \ref{Ruf_rem}.
\end{proof}
%%%%%%%%%%%%%%%%%%%%%%%%%%%%%
%%  Proposition: (PS)-property   %%
%%%%%%%%%%%%%%%%%%%%%%%%%%%%%%%%%%%%%%%%%%%%%%%%%%%%%%%%%%
\begin{Prop}\label{prop-PS}
	Assume that conditions $(V_1)$--$(V_2)$ and $(f_1)$--$(f_4)$ are satisfied. Let
	$\{u_n\}\subset H^1L^q_{w}$ be a PS sequence for $I_V$ at level $c<1/2$, weakly converging to $u$ in $H^1$. If $u\neq 0$, then $u\in H^1L^q_{w}$ and 
	$u_n\rightharpoonup u$ weakly in $H^1L^q_{w}$. Furthermore, as $n\to\infty$
	\begin{eqnarray} \label{convFF}
	\left[\log |x| \ast F(x,u_n)\right]f(x,u_n)\longrightarrow\left[\log
	|x|\ast F(x,u)\right]f(x,u) \quad \hbox{ in } L_{loc}^1(\mathbb R^2)
	\end{eqnarray}
	and $u$ is a weak solution to \eqref{P}.
\end{Prop}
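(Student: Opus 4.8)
The plan is to upgrade the weak $H^1$-limit $u$ furnished by Lemma \ref{lem-PSbdd} to a weak limit in $H^1L^q_w$, and then pass to the limit in the Palais--Smale relation \eqref{convI'} tested against compactly supported functions. First I would record the improved convergences. By Lemma \ref{lem-PSbdd} the sequence is bounded in $H^1(\R^2)$, so (up to a subsequence) $u_n\rightharpoonup u$ in $H^1$, $u_n\to u$ in $L^s_{loc}(\R^2)$ for every $s<\infty$ and a.e.; and since $c<1/2$, Lemma \ref{lem-PSimproveTM} gives that $\{F(x,u_n)\}$ is bounded in $L^\alpha(\R^2)$ for some $\alpha>1$. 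Combining the growth relations $(f_1)$--$(f_4)$ (which, as in the proof of Lemma \ref{lem-PSimproveTM}, control $f$ in terms of $F$ up to an $e^{\varepsilon s^2}$ factor) with a Trudinger--Moser bound on bounded sets, one also gets that $\{f(x,u_n)\}$ and $\{u_nf(x,u_n)\}$ are bounded in $L^{\alpha'}_{loc}(\R^2)$ for some $\alpha'>1$. Hence by Vitali's theorem $F(x,u_n)\to F(x,u)$, $f(x,u_n)\to f(x,u)$ and $u_nf(x,u_n)\to uf(x,u)$ in $L^1_{loc}(\R^2)$ (indeed in $L^r_{loc}$ for a suitable $r>1$), while $F(x,u_n)\rightharpoonup F(x,u)$ weakly in $L^\alpha(\R^2)$.

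Next I would show that $\{u_n\}$ is actually bounded in $H^1L^q_w(\R^2)$, which is where the hypothesis $u\neq 0$ enters. Writing $\log(1+|x-y|^{-1})\le\log 2+C_\theta|x-y|^{-\theta}$ on $\{|x-y|<1\}$ for small $\theta\in(0,1)$ and $\le\log 2$ otherwise, and applying the HLS inequality \eqref{HLS} with $\mu=\theta<1$ and exponents tuned so that $F(x,u_n)$ is estimated in an $L^s$-norm with $1\le s<\alpha$, one gets that $\mathfrak F_2(u_n)$ is bounded; since $\mathfrak F(u_n)$ is bounded by Lemma \ref{lem-PSbdd} and $\mathfrak F_1=\mathfrak F_2-\mathfrak F$, also $\mathfrak F_1(u_n)$ is bounded. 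Now $u\neq 0$ forces $\|F(x,u)\|_1>0$, so by $L^1_{loc}$-convergence there is a ball $B$ with $\liminf_n\int_B F(x,u_n)>0$; using $\log(1+|x-y|)\ge c\log(1+|x|)-C$ for $y\in B$ and $|x|$ large, the boundedness of $\mathfrak F_1(u_n)$ yields
\[
\int_{\R^2}F(x,u_n)\log(1+|x|)\,dx\le C\qquad\text{for all }n.
\]
By \eqref{estF} together with the elementary lower bound $F(s)\ge c\,s^q$ (valid for all $s>0$ since $F(s)/s^q$ is continuous, positive, and tends to a positive constant at $0$ and to $+\infty$ at infinity), this gives $\sup_n\|u_n\|_{L^q(wdx)}<\infty$. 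As $H^1L^q_w$ is a reflexive Banach space, a subsequence converges weakly there to a limit which, by uniqueness of weak limits in $H^1$, is $u$; thus $u\in H^1L^q_w$ and $u_n\rightharpoonup u$ weakly in $H^1L^q_w$.

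It then remains to prove \eqref{convFF}. Fixing a bounded $\Omega$ and decomposing $\log\frac{1}{|x|}=\log(1+\frac{1}{|x|})-\log(1+|x|)$, for the short-range kernel $\log(1+\frac1{|\cdot|})$ (integrable singularity at $0$, decay at infinity) Young's inequality together with the $L^1_{loc}$-convergence and the uniform $L^\alpha$-bound of $F(x,u_n)$ gives $\log(1+\frac1{|\cdot|})\ast F(x,u_n)\to\log(1+\frac1{|\cdot|})\ast F(x,u)$ in $L^s(\Omega)$ for some $s>1$, which paired with $f(x,u_n)\to f(x,u)$ in $L^{s'}_{loc}$ (exponents chosen compatibly) yields convergence of this piece in $L^1(\Omega)$. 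For the long-range kernel, $\log(1+|x-y|)\le\log(1+|x|)+\log(1+|y|)$ shows that on $\Omega$ the convolution $\log(1+|\cdot|)\ast F(x,u_n)$ is bounded in $L^\infty(\Omega)$ (by $C_\Omega\|F(x,u_n)\|_1+\|F(x,u_n)\|_{L^1(wdx)}$, both uniformly bounded by the previous step); the contribution of $\{|y|\le R\}$ converges strongly, while the tail on $\{|y|>R\}$ is dominated by $C_\Omega\int_{|y|>R}F(x,u_n)+\int_{|y|>R}F(x,u_n)\log(1+|y|)$, the first term tending to $0$ uniformly in $n$ as $R\to\infty$ by tightness of $\{F(x,u_n)\}$ in $L^1$ (a consequence of $\sup_n\|u_n\|_{L^q(wdx)}<\infty$, hence tightness of $\{|u_n|^q\}$, and Lemma \ref{lem-PSimproveTM}). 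Controlling the second, weighted tail uniformly in $n$ --- equivalently, showing that $\{F(x,u_n)\}$ is uniformly integrable with respect to $\log(e+|x|)\,dx$ --- is the delicate point: it is obtained by again exploiting the uniform bound on $\mathfrak F_1(u_n)$ through the lower bound $\int_B F(x,u_n)\ge\delta>0$, which localizes the weighted mass of $F(x,u_n)$.

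Finally, with \eqref{convFF} at hand I would pass to the limit in \eqref{convI'} tested against an arbitrary $v\in C_c^\infty(\R^2)$: the linear terms converge by weak $H^1$-convergence, and the (already recombined) nonlocal term converges by \eqref{convFF}. This gives
\[
\int_{\R^2}\nabla u\nabla v+V(x)uv\,dx=\frac1{2\pi}\int_{\R^2}\Big[\log\frac1{|x|}\ast F(x,u)\Big]v\,f(x,u)\,dx
\]
for every $v\in C_c^\infty(\R^2)$, hence, by density in $H^1L^q_w(\R^2)$, for every test function, i.e. $u$ is a weak solution of \eqref{P}. The main obstacle throughout is precisely the convergence of the nonlocal term at infinity, owing to the unbounded, sign-changing logarithmic kernel; it is handled by the kernel splitting above and by the uniform $L^1$-, $L^\alpha$- and $L^1(wdx)$-bounds on $F(x,u_n)$ combined with the bilinear-form estimate.
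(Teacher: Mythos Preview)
Your route to the boundedness of $\{u_n\}$ in $H^1L^q_w$ is the paper's own: bound $\mathfrak F_2(u_n)$ by HLS together with the $L^\alpha$ integrability of Lemma~\ref{lem-PSimproveTM}, deduce that $\mathfrak F_1(u_n)$ is bounded, and then use $u\neq 0$ (hence $\liminf_n\int_B F(x,u_n)\ge\delta>0$ on some ball $B$) to extract $\sup_n\int_{\R^2} F(x,u_n)\log(1+|x|)\,dx<\infty$, whence $\sup_n\|u_n\|_{L^q(wdx)}<\infty$ via $F(s)\gtrsim s^q$. The paper phrases this last extraction as an appeal to \cite[Lemma~2.1]{CW}, but it is exactly your argument.

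Where your proposal parts from the paper is in the passage to the limit in the nonlocal term, and there a genuine gap appears. In your kernel splitting, the long-range piece forces you to control the tail $\int_{|x|>R}F(x,u_n)\log(1+|x|)\,dx$ \emph{uniformly in $n$} as $R\to\infty$; you call this ``uniform integrability of $\{F(x,u_n)\}$ with respect to $\log(e+|x|)\,dx$'' and claim it follows from the bound on $\mathfrak F_1(u_n)$ together with $\int_B F(x,u_n)\ge\delta$. But that combination yields only \emph{boundedness} of the weighted integral --- indeed you already used it for precisely that --- and nothing more: from $\delta\int_{\R^2}(\log(1+|x|)-C_B)F(x,u_n)\,dx\le\mathfrak F_1(u_n)\le C$ you cannot infer any decay of the tail. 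None of the other controls available (the $L^\alpha$ bound, the $H^1L^q_w$ bound, Theorem~\ref{thm_A1}) supplies a second logarithmic factor, so the weighted tightness you need is simply not established. Without it, neither the $L^1_{loc}$ convergence \eqref{convFF} nor even the weaker convergence tested against a fixed $\varphi\in C_c^\infty$ goes through by your method.

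The paper avoids this obstacle altogether. Writing $w_n:=\log\frac{1}{|\cdot|}\ast F(\cdot,u_n)$, it proves the key \emph{Claim}
\[
\int_{\R^2}\bigl|w_n(y)\bigr|\,f(y,u_n(y))\,|u_n(y)|\,dy\le C,
\]
by exploiting the sign information $\int w_n f(u_n)u_n=\|u_n\|_V^2+o(1)>0$ (from $I_V'(u_n)[u_n]\to0$ and $u\neq0$): this reduces the estimate to $\int w_n^+ f(u_n)u_n$, which is dominated by the $\log(1+|x-y|^{-1})$ contribution alone and hence controlled by HLS with exponents close to $1$ and Lemma~\ref{lem-PSimproveTM}. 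With this Claim, the de~Figueiredo--Miyagaki--Ruf compactness lemma \cite[Lemma~2.1]{dFMR} gives \eqref{convFF} directly, because the bound on $\int |w_n f(u_n)u_n|$ furnishes equi-integrability of $w_nf(u_n)$ on $\{|u_n|>M\}$ without any need for weighted tightness of $F(x,u_n)$. This sign-based reduction is the idea missing from your argument.
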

 \begin{proof}  Fix $\alpha \in (1, 1/2c)$ so that by \eqref{convI} we have
	\begin{multline*}
	\frac 12 \|u_n\|^2_V+\int_{\R^2}\int_{\R^2}\log\left(1+|x-y|\right)F(x,u_n(x))F(y,u_n(y)) dxdy=\\=c+\int_{\R^2}\int_{\R^2}\log\left(1+\frac{1}{|x-y|}\right)F(x,u_n(x))F(y,u_n(y))dxdy+ {\text{o}}(1)\\
	\leq c+1+C_\alpha\int_{\R^2}\int_{\R^2}\left[1+\frac{1}{|x-y|^{4\frac{\alpha -1}{\alpha}}}\right] F(x,u_n(x))F(y,u_n(y))dxdy \\ 
	\leq c+1+C_\alpha \left\{\int_{\R^2}[F(x,u_n)]^\alpha dx \right\}^2+C_\alpha \left\{\int_{\R^2}F(x,u_n) dx\right\}^2
	\end{multline*}
by Proposition \ref{HLS}, since $\frac 2{\alpha}+4\frac{\alpha -1}{\alpha}\, \frac 12=2$. Hence, by Lemma \ref{lem-PSimproveTM}, 
$$
\int_{\R^2}\int_{\R^2}\log\left(1+|x-y|\right)F(x,u_n(x))F(y,u_n(y))dxdy\leq C_\alpha
$$
as $n\to +\infty$, and thus also $\int_{\R^2\times \R^2}\log\left(1+|x-y|\right)|u_n(x)|^q|u_n(y)|^qdxdy$ is bounded. Since $u\neq 0$, by Lemma 2.1 in \cite{CW} we have
$$
\int_{\R^2}\log(1+|x|^2)|u_n|^q(x)dx\leq C \quad \hbox{ as } n \to +\infty
$$
so that $\|u_n\|^2_{1,q(w)}$ is bounded. Up to a subsequence we have $u_n\rightharpoonup u$ in $H^1L^q_{w}$. Moreover, recall that as $n\to +\infty$,
\begin{equation}\label{Lim}
\frac{1}{2\pi}\int_{\R^2}\left[\log \frac{1}{|x|}\ast F(x,u_n)\right]vf(x,u_n) dx= \int_{\R^2}\nabla u \nabla v+ V(x)uv dx + {\rm{o}}(1), \quad \, v \in H^{-1}L^q_{w}, 
\end{equation}
in particular for any $\varphi \in \mathcal C^{\infty}_c(\R^2)$. In order to prove that  $u$ is a weak solution of \eqref{P}, let us suppose for the moment the following 

\noindent {\textbf{Claim:}}
\begin{equation*}
\int_{\R^2} \left|\log \frac{1}{|x|}\ast F(x,u_n)\right|f(x,u_n)|u_n| dx\leq C
\end{equation*}
of which we postpone the proof. 
Now we apply Lemma 2.1 in \cite{dFMR} to the sequence of functions 
$$g(y, u_n(y)):=\left(\log \frac{1}{|x|}\ast F(x,u_n)\right)(y)f(y,u_n(y)),$$ restricted to any compact domain $\Omega$: they are $L^1$ functions since $u_n, u\in H^1L^q_{w}$ and, thanks to the claim, $u_n(y)g(y, u_n(y))$ is uniformly bounded in $L^1$. Therefore, from \cite{dFMR} we have 
\begin{equation*}
\left(\log \frac{1}{|x|}\ast F(x,u_n)\right)f(x,u_n) dx \to \left(\log \frac{1}{|x|}\ast F(x,u)\right)f(x,u) dx\ \ \hbox{ in } L^1_{loc}(\R^2)
\end{equation*}
as well as  
\begin{equation*}
\int_{\R^2}\left[\log \frac{1}{|x|}\ast F(x,u_n)\right] f(x,u_n) \varphi dx\to \int_{\R^2}\left[\log \frac{1}{|x|}\ast F(x,u)\right]f(x,u)\varphi dx 
\end{equation*}
for any $\varphi \in \mathcal C^{\infty}_c(\R^2)$, which is a dense subset of $H^{-1}L^q_{w}$. This together with \eqref{Lim} implies that $u$ is a weak solution of \eqref{P}.

\noindent \textit{Proof of the Claim.}

\noindent The key ingredient is the uniform bound provided by Lemma \ref{lem-PSimproveTM}. In order to simplify the notation, let us set
\begin{equation*}
w_n(y)=\left(\log\frac 1{|x|}\ast F(x,u_n)\right) (y)
\end{equation*}
By \eqref{I''} 
$$\int_{\R^2}w_nf(x,u_n)u_n dx= A^2+o(1)\ ,
$$
where $A=\lim_{n\to +\infty}\| u_n\|_V\geq \|u\|_V$ so that  
$$\int_{\R^2}w_nf(x,u_n)u_n dx >0$$ 
for $n$ large enough (note that we are assuming $u\neq 0$, that is, $A^2>0$). Hence,
\begin{equation*}
0<\int_{\R^2}w_nf(x,u_n)u_n dx=\int_{w_n>0}w_nf(x,u_n)u_n dx +\int_{w_n<0}w_nf(x,u_n)u_n dx
\end{equation*}
which implies 
$$\int_{\R^2}w^-_nf(x,u_n)u_n dx<\int_{\R^2}w^+_nf(x,u_n)u_n dx$$
and thus 
\begin{equation*}
\int_{\R^2}|w_n|f(x,u_n)u_n dx\leq 2\int_{\R^2}w^+_nf(x,u_n)u_n dx\ .
\end{equation*}
Now, we have 
\begin{multline*}
\int_{\R^2}w^+_nf(x,u_n)u_n=\int_{y:w_n>0}f(y,u_n(y))u_n(y)dy\int_{\R^2}\log(1+\frac 1{|x-y|}) F(x,u_n(x))dx\\
-\int_{y:w_n>0}f(y,u_n(y))u_n(y)dy\int_{\R^2}\log(1+|x-y|) F(x,u_n(x))dx\\
\leq \int_{\R^2}\int_{\R^2}\log(1+\frac 1{|x-y|}) F(x,u_n(x))f(y,u_n(y))u_n(y)dxdy\ .
\end{multline*}
Therefore, for any $\mu>0$, small, there exists a constant $\delta_\mu \in (0,1)$ sufficiently small such that
\begin{multline*}
\int_{\R^2}w^+_nf(x,u_n)u_n dx\leq \int_{\R^2}dy\int_{|x-y|>\delta_\mu}\log(1+\delta_\mu^{-1}) F(x,u_n(x))f(y,u_n(y))u_n(y)dx+\\+2\int_{\R^2}dy\int_{|x-y|<\delta_\mu}\frac 1{|x-y|^{\mu}} F(x,u_n(x))f(y,u_n(y))u_n(y) dx\\
\leq  C_{\mu}\int_{\R^2} F(x,u_n(x))dx \int_{\R^2}f(y,u_n)u_n(y)dy \\
+ \int_{\R^2}\int_{\R^2}\frac{1}{|x-y|^{\mu}} F(x,u_n(x))f(y,u_n)(y)u_n(y)dxdy\ .
\end{multline*}
The first integral in the last expression is uniformly bounded, as one can see by Lemma \ref{lem-PSimproveTM} and Holder's inequality, recalling that $\|u_n\|_V$ is also uniformly bounded.  Concerning the second term, by the HLS inequality if $2/s +\mu/2=1$, one has 
\begin{equation*}
\int_{\R^2}\int_{\R^2}\frac{1}{|x-y|^{\mu}} F(x,u_n(x))f(y,u_n(y))u_n(y)dxdy\leq C(\mu)\|F(x,u_n)\|_s\|f(y,u_n)u_n\|_s\ .
\end{equation*}
Since
\begin{equation*}
s=\frac{4}{4-\mu}\to 1 \ \ \hbox{ as } \ \mu \to 0 
\end{equation*}
we can  choose  $\mu$ small enough to apply again  Lemma \ref{lem-PSimproveTM} and  Holder's inequality, to obtain that $\|F(x,u_n)\|_s\|f(x,u_n)u_n\|_s$ stays bounded. Finally, $\int w^+_nf(x,u_n)u_n$ is bounded and the same holds for $\int |w_n|f(x,u_n)u_n$, that is our claim.
	\end{proof}

	\section{Proof of Theorem \ref{thm1}}\label{final_sec}
	
	\noindent We are now ready to prove Theorem \ref{thm1}. From Lemma \ref{lem-MPG}, the functional $I_V$ satisfies the Mountain Pass geometry. Hence, there exists a (PS) sequence  $\{u_n\}\subset H^1L^q_{w}(\R^2)$ at level ${m_V}$ and by Lemma \ref{lem-PSimproveTM}, $\{u_n\}$ is bounded in $H^1$ and it weakly converges to some $u\in H^1$. We have that either $\{u_n\}$ is vanishing, that is for any
$r>0$
$$
\lim_{n\to +\infty}\sup_{y\in \mathbb R^2}\int_{B_r(y)}|u_n|^2 dx=0
$$
or there exist $r, \delta >0$ and a sequence $\{y_n\}\subset \mathbb Z^2$ such that
$$
\lim_{n\to \infty}\int_{B_r(y_n)}|u_n|^2 dx\geq \delta.
$$
If $\{u_n\}$ is vanishing, by Lions' concentration-compactness result we have
\begin{equation}\label{Lions}
u_n\to 0 \quad \mbox{in} \quad  L^s(\mathbb R^2) \quad \forall \, s>2,
\end{equation}
as $n\to\infty$. In this case it is standard to show that
$$
\|F(x,u_n)\|_{\gamma}, \|u_nf(x,u_n)\|_{\gamma}\to 0
$$
for some values of $\gamma>1$ and close to 1, thanks to the improved exponential integrability given by Lemma \ref{lem-PSimproveTM} and the growth assumption $F(x,t)<tf(x,t)$. Hence, applying the HLS inequality we deduce, similarly to the conclusion of the proof of Proposition \ref{prop-PS}:
\begin{eqnarray}
\int_{\R^2}\int_{\R^2} \log\left(1+\frac{1}{|x-y|}\right) F(x,u_n(x))F(y,u_n(y))dxdy\to 0\label{lab_lim_1}\\ 
\int_{\R^2}\int_{\R^2} \log\left(1+\frac{1}{|x-y|}\right) F(x,u_n(x))u_n(y)f(y,u_n(y)) ddx dy\label{lab_lim_2}\to 0
\end{eqnarray}
as $n\to\infty$.  Combining \eqref{lab_lim_1}--\eqref{lab_lim_2} with \eqref{convI} and \eqref{Lim} yields
$$
\frac 1{2\pi}\int_{\R^2}\int_{\R^2} \log\left(1+|x-y|\right) F(x,u_n(x))\left[F(y,u_n(y))-u_n(y)f(y,u_n(y))\right] dx dy= 2m_V+{\text{o}}(1)
$$
so that $m_v\leq 0$, which is not possible. Therefore the vanishing case does not occur.\\

\noindent Now set $v_n:=u_n(\cdot-y_n)$, then
\begin{equation}\label{nonvan}
\int_{B_r(0)}|v_n|^2 dx\geq \delta\ .
\end{equation}
By the periodicity assumption, $I_V$ and $I'_V$ are both invariant by the $\mathbb Z^2$-action, so that $\{v_n\}$ is
still a PS sequence at level $m_V$. Then $v_n\rightharpoonup v$ in $H^1(\R^2)$ with $v\neq 0$ by using \eqref{nonvan}, since $v_n\to v $ in $L^2_{loc}(\mathbb R^2)$. We conclude by Proposition \ref{prop-PS} that $v\in H^1L^q_{w}$ is a nontrivial critical point of $I_V$ and $I_V(v)=m_V$, which completes the proof of Theorem \ref{thm1}.

\end{document}